\theoremstyle{plain}
\newtheorem{theorem}{Theorem}[subsection]
\newtheorem{corollary}[theorem]{Corollary}
\newtheorem{lemma}[theorem]{Lemma}
\newtheorem{proposition}[theorem]{Proposition}
\theoremstyle{definition}
\newtheorem{definition}[theorem]{Definition}
\newtheorem{example}[theorem]{Example}
\newtheorem{remark}[theorem]{Remark}
\newtheorem{setup}[theorem]{Setup}
\newcommand{\CA}{{\mathcal A}}
\newcommand{\CB}{{\mathcal B}}
\newcommand{\CC}{{\mathcal C}}
\newcommand{\CE}{{\mathcal E}}
\newcommand{\CI}{{\mathcal I}}
\newcommand{\CJ}{{\mathcal J}}
\newcommand{\CK}{{\mathcal K}}
\newcommand{\CL}{{\mathcal L}}
\newcommand{\CM}{{\mathcal M}}
\newcommand{\CP}{{\mathcal P}}
\newcommand{\CQ}{{\mathcal Q}}
\newcommand{\CT}{{\mathcal{T}}}
\newcommand{\CV}{{\mathcal V}}
\newcommand{\CX}{{\mathcal{X}}}
\newcommand{\Q}{{\mathbb Q}}
\newcommand{\Z}{{\mathbb Z}}
\newcommand{\X}{{\mathcal X}}
\newcommand{\Y}{{\mathcal Y}}
\newcommand{\Ker}{\mathrm{Ker}}
\newcommand{\Ob}{\mathrm{Ob}}
\newcommand{\aadd}{\mathrm{add}}
\newcommand{\Add}{\mathrm{Add}}
\newcommand{\frakF}{\mathfrak{F}}
\newcommand{\frakE}{\mathfrak{E}}
\newcommand{\frakD}{\mathfrak{D}}
\newcommand{\frakG}{\mathfrak{G}}
\newcommand{\frakH}{\mathfrak{H}}
\newcommand{\Modr}{\mathrm{Mod}\text{-}}
\newcommand{\Hom}{\operatorname{Hom}}
\newcommand{\im}{\operatorname{im}}
\newcommand{\Ext}{\operatorname{Ext}}
\newcommand{\wrt}{with respect to }
\newcommand{\PEF}{{\Phi}_\frakE(\frakF)}
\newcommand{\CPEF}{{\Phi}^\frakE(\frakF)}
\newcommand{\CPEG}{{\Phi}^\frakE(\mathfrak{G})}
\newcommand{\Ph}{{{\Phi}}}
\newcommand{\PB}{{\mathrm{PB}}}
\newcommand{\PO}{{\mathrm{PO}}}
\newcommand{\proj}{\text{-}\mathrm{proj}}
\newcommand{\inj}{\text{-}\mathrm{inj}}
\newcommand{\Eproj}{{\mathfrak E}\text{-}\mathrm{proj}}
\newcommand{\Einj}{{\mathfrak E}\text{-}\mathrm{inj}}
\newcommand{\Ideal}[1]{\operatorname{Ideal}(#1)}
\newcommand{\Ab}{\mathcal{A}b}
\begin{document}

\title[Ideal cotorsion theories in triangulated categories]{Ideal cotorsion theories in triangulated categories}
\author{Simion Breaz} \thanks{}
\address{Babe\c s-Bolyai University, Faculty of Mathematics
and Computer Science, Str. Mihail Kog\u alniceanu 1, 400084
Cluj-Napoca, Romania}
\email{bodo@math.ubbcluj.ro}

\author{George Ciprian Modoi}\thanks{}
\address{Babe\c s-Bolyai University, Faculty of Mathematics
and Computer Science, Str. Mihail Kog\u alniceanu 1, 400084
Cluj-Napoca, Romania}
\email{cmodoi@math.ubbcluj.ro}

\date{\today}

\subjclass[2000]{18G15, 18E30, 16E30, 16E05}

\thanks{ }

\begin{abstract}
We study ideal cotorsion pairs associated to almost exact structures in extension closed subcategories of triangulated 
categories. This approach allows us to extend the recent ideal approximations theory developed by Fu, Herzog et al. for exact categories in the above mentioned context. In the last part of the paper we apply the theory in order to study  
projective classes (in particular localization or smashing subcategories) in compactly generated categories.  
\end{abstract}

\keywords{triangulated category; almost exact structure; ideal; precover; preenvelope; ideal cotorsion theory}

\maketitle

\tableofcontents

\section{Introduction}

Approximations of objects by some better understood ones are important tools in the study of various categories. 
For example they are used to construct resolutions and to do homological algebra: 
in module theory the existence of injective envelopes, projective precovers and flat covers are often 
used for defining derived functors, for dealing with invariants as (weak) global dimension etc. The central role in approximation theory 
for the case of module, 
or more general abelian or exact, categories is played by the notion of cotorsion pair, cf. \cite{Go-Tr}. On the other hand, in the context of triangulated categories the cotorsion pairs are replaced by $t$-structures, as in \cite{perverse}. We note that in triangulated categories   
there are no important differences between torsion and cotorsion theories. The explanation is the fact that the $\Ext^1$-functor from an abelian category may be 
computed as a shifted $\Hom$ functor in the corresponding derived category.
These structures were generalized
in \cite{Iy-Yo-08} to torsion pairs and mutation pairs, and the authors proved that some results which 
are valid for cotorsion theories in the context of module categories   
hold also in the context of triangulated categories (e.g. Wakamatsu's Lemma, \cite[Lemma 5.13]{Go-Tr}). 

On the other side, there are situations when the approximations are realized by some ideals which are not object ideals, e.g. the phantom 
ideal introduced by Ivo Herzog in \cite{Herzog-ph} (in module categories) and the ideal of $\CP$-null homomorphisms associated to a class 
$\CP$ of objects used in \cite{Ch-98} (in triangulated categories). 
In \cite{Fu-et-al} and \cite{Fu-Herzog} the authors extended, in the context of
exact categories, the study of classical cotorsion pairs to ideal cotorsion pairs, and the theory developed in \cite{Fu-et-al} 
was extended and completed in \cite{Estrada-et-al} and \cite{Ozbek}. 

Following these ideas, in the present paper we will study ideal cotorsion pairs in extension closed subcategories of triangulated categories, in order to 
obtain good information about precovering/preenveloping ideals. Since every exact category can be embedded as an extension closed subcategory
of a triangulated category (eventually extending the universe) the theory presented here includes important parts from the theory developed in \cite{Fu-et-al} and \cite{Fu-Herzog}.

Let $\CT$ be a triangulated category and $\CA$ a subcategory of $\CT$ closed under extensions. 
%
In the case of module categories precovering and preenveloping classes are studied in relation with 
the canonical exact structure on these categories. One of the main question in this context is to establish
if some or all 
precovers (preenvelopes) are deflations (inflations) with respect to this exact structure. In order to approach this problem in  
our context, let us recall that the usual substitutes for exact structures in 
triangulated categories are proper classes of triangles, introduced by Beligianis in \cite{Bel2000}. For other examples of 
applications for proper classes we refer to
\cite{Meyer2} and \cite{Meyer1}. Since we will work in subcategories of triangulated categories, and this approach covers the case of exact categories, we will use the term \textsl{almost exact structure} for such collections of triangles (i.e. 
classes of triangles in $\CA$ which are closed under  homotopy pullbacks, homotopy pushouts, finite direct sums and contains all
splitting triangles).  

Therefore, we will study precovering (preenveloping) ideals $\CI$ 
such that all $\CI$-precovers (preenvelopes) are $\frakE$-deflations (inflations), where $\frakE$ is a fixed almost exact structure in $\CA$. 
%
%
If $\CI$ is an ideal in $\CA$ then we can associate to $\CI$ the class $\mathfrak{PB}(\CI)$  
which consists in all triangles which can be constructed as homotopy pullbacks of triangles from 
$\frakE$ along maps from $\CI$. Therefore, $\mathfrak{PB}(\CI)$ is an almost exact structure contained in $\frakE$.
 Dually, if $\CJ$ is an ideal in $\CA$ then we can associate to $\CJ$ the class $\mathfrak{PO}(\CJ)$  
which consists in all triangles which can be constructed as homotopy pushouts of triangles from 
$\frakE$ along maps from $\CJ$. The class $\mathfrak{PO}(\CJ)$ is also an almost exact structure contained in $\frakE$.
A pair of ideals $(\CI,\CJ)$ is an ideal cotorsion pair with respect to  $\frakE$ if $\CI=\mathfrak{PO}(\CJ)\textrm{-proj}$ and 
$\CJ=\mathfrak{PB}(\CI)\textrm{-inj}$. Here, if $\frakF$ is an almost exact structure then 
we denote by $\frakF\textrm{-proj}$ (respectively $\frakF\textrm{-inj}$) of all maps which are
projective (injective) with respect to all triangles from $\frakF$.


We can reverse this process starting with an almost exact structure $\frakF\subseteq \frakE$. We 
associate to $\frakF$ an ideal $\mathbf{\Phi}_\frakE(\frakF)$ of those homomorphisms $\varphi$ with the property that 
all triangles obtained via  homotopy pullbacks of triangles from $\frakE$ along $\varphi$ are in $\frakF$. The elements of 
$\mathbf{\Phi}_\frakE(\frakF)$ are called \textsl{relative $\frakF$-phantoms}. Dually,
we can associate to an almost exact structure $\mathfrak{G}\subseteq \frakE$ the ideal $\mathbf{\Phi}^\frakE(\mathfrak{G})$
of all maps $\psi$ such that every triangle obtained via a homotopy pushout of a triangle from $\frakE$ along $\psi$ is a 
triangle from $\mathfrak{G}$. In this way we obtain two Galois correspondences between the class of ideals in $\CA$ and the class of 
an almost exact structures included in $\frakE$ (Theorem \ref{Galois-correspondences}).

We start with a preliminary section (Section \ref{Section-weak-proper}) where we introduce (almost) exact structures in extension closed subcategories of triangulated categories. These are natural generalizations of Quillen's exact structures and of Beligiannis proper classes. 

In Section \ref{section-precovering} we study some general properties of precovers and preenvelops associated to some (phantom) ideals. It is proved that the existence of enough injective or projective morphisms associated to an almost exact structures is connected with the existence of some ideal precovers or preenvelopes. Then we define an orthogonality associated to an almost exact structure, and we introduce special precovering and special preenveloping ideals. These are notions are natural since they describe the existence of special precovers and preenvelopes associated in module theory to complete cotorsion pairs. In this context one of the main result is Salce's Lemma which says us that in many cases all 
precovers/preenvelopes are special (i.e. they can be constructed via some special pushouts/pullbacks). 
This lemma was extended to ideals associated to exact categories in \cite{Fu-et-al}, where it is proved that an ideal $\CI$
is special precovering if and only if it is the ideal of phantoms associated to an exact structure which have enough special 
injective homomorphisms. In Theorem \ref{salce-lemma} we will prove the corresponding result for our hypothesis, and we will apply this result to obtain a characterization for complete ideal cotorsion pairs.

{%
In Section \ref{product-and-toda} we study products and extensions of ideals in order to prove that products of special precover ideals are special precover
ideals (Theorem \ref{perp-product}). This is an ideal version for Ghost Lemma, \cite[Theorem 1.1]{Ch-98}. 
 We also include here the ideal version for the above mentioned Wakamatsu's Lemma (Lemma \ref{waka}). 
It is interesting that some of the results proved for exact categories in \cite{Fu-Herzog} have simpler proofs in our context.
On the other side, working in triangulated categories we have only weak (co)kernels, and we have only homotopy pullbacks/pushouts. Therefore, we cannot use the uniqueness parts for corresponding universal properties (these are important ingredients in the case of exact categories, e.g. the reader can compare the proof for Ghost Lemma provided in \cite[Theorem 25]{Fu-Herzog} with the proof for Theorem 
\ref{perp-product}).

The main ain of Section \ref{Section-relative-cotorsion-pairs} is to provide characterizations for complete ideal torsion pairs. 
The main result is Theorem \ref{mainthA}, where it is proved that 
if we have enough $\frakE$-injective $\frakE$-inflations
and $\frakE$-projective $\frakE$-deflations, then an ideal cotorsion pair $(\CI,\CJ)$ is complete if and only if $\CI$ is a precovering ideal
or $\CJ$ is a preenveloping ideal. These complete ideal cotorsion pairs can be constructed via 
relative (co)phantoms associated to almost exact structures included in $\frakE$.


{As an application of the theory developed here we will prove in the last section of the paper 
a generalization to projective classes of a result proved by Krause in \cite{Krause-tel} for smashing subcategories of compactly 
generated triangulated categories (see Proposition \ref{krause-generalized}). More precisely, let us recall that Krause proved that every smashing subcategory $\CB$ of a compactly generated triangulated category $\CT$ is determined by the ideal  $\CI_\CB$ (in the subcategory $\CT_0$ of all compact objects in $\CT$) of 
all homomorphisms between compact objects which factorize through an element of $\CB$. We consider the same ideal $\CI_\CB$ associated to a projective 
class $(\CB,\CJ)$, and we prove that if $H:\CT\to \CA$ is a cohomological functor into a Grothendieck category $\CA$ such that $H(\CI_\CB)=0$ 
then $H$ annihilates an ideal of relative phantoms. In the case $\CB$ is smashing this ideal contains of all homomorphisms from $\CT$ which factorizes through elements from $\CB$. Using these, it is proved that a smashing subcategory satisfies the telescope conjecture if and only if it coincides to the class of objects of a phantom ideal associated to a triangulated subcategory of $\CT_0$, Proposition \ref{propo-smashing-tel}. Moreover, for the case when $H$ is full and $H(\CI_\CB)=0$ we always obtain $H(\CB)=0$ (Proposition    
\ref{H-full}).
}

For reader's convenience, the results proved in Sections \ref{Section-weak-proper}, \ref{section-precovering},  and  \ref{Section-relative-cotorsion-pairs} 
are presented together with their duals since the direct 
statements and the duals are connected in Theorem \ref{mainthA}. The direct statement is denoted by (1) and the dual is denoted by (2). 
The results proved in Section \ref{product-and-toda} can be also dualized, but we left to the reader to enunciate these duals.}

\section{Almost exact structures and ideals}\label{Section-weak-proper}

\subsection{Almost exact structures}
 We refer to \cite{Nee} for basic properties of triangulated categories. 
If $\CT$ is a triangulated category, we denote by $(-)[1]$ the suspension functor associated to $\CT$.
Moreover, $\frakD$ will denote the class of all distinguished triangles in $\CT$. Since we work only with distinguished 
triangles, by \textsl{triangle} we mean \textsl{distinguished triangle}. If 
$$\mathfrak{d}:A\overset{\alpha}\to B\overset{\beta}\to C\overset{\gamma}\to A[1]$$ is a triangle in $\CT$, we will say that $\gamma$ is the phantom map corresponding to  
$\mathfrak{d}$. For a subcategory $\CA$  of $\CT$ we denote  by $\CA^{\to}$ the class of all morphisms in $\CA$.

Let $\CT$ be a triangulated category.
If $\CA$ is a full subcategory (closed with respect to isomorphisms)
of $\CT$, we will say that it is \textsl{closed under extensions} if for every triangle $B\to C\to A\to B[1]$ in $\CT$ 
such that 
 $A$ and $B$ are objects in $\CA$ then $C$ is an object in $\CA$. We will denote by $\frakD_\CA$ the class of all 
 distinguished triangles $\mathfrak{d}:\ B\to C\to A\to B[1]$ such that $A,B,C\in\CA$ (and we will say that 
 $\mathfrak{d}$ is a \textsl{triangle from $\CA$}). Note that every extension closed subcategory $\CA$ of $\CT$ is closed 
with respect to  finite coproducts. 
We will use the following
generalization of the notion of proper class introduced in \cite{Bel2000}.

Let $\CT$ be a triangulated category, and $\CA$ a full subcategory of $\CT$ which is closed under extensions. 
A class of triangles $\frakE\subseteq \frakD_\CA$ is an \textsl{almost exact structure} for $\CA$ if 
\begin{enumerate}[{\rm (i)}]
 \item $\frakE$ is closed with respect to isomorphisms of triangles, coproducts and contains all split triangles,
 \item $\frakE$ is closed with respect to \textsl{base changes} (homotopy pullbacks) constructed along homomorphisms
 from $\CA$, i.e. if 
 $\mathfrak{d}: C\to B\to A\to C[1]$ is a triangle in $\frakE$ and $\alpha:X\to A$ is a homomorphism in $\CA$ then 
 the top triangle $\mathfrak{d}\alpha$ in every homotopy cartesian diagram 
 \[\xymatrix{           
  \mathfrak{d}\alpha:& C\ar[r]\ar@{=}[d] & Y\ar[d]\ar[r]    & X\ar[r]\ar[d]^{\alpha}   & C[1]\ar@{=}[d]\\
  \mathfrak{d}: & C\ar[r]           & B\ar[r]    & A\ar[r]     & C[1]         }\]
is in $\frakE$.
\item $\frakE$ is closed with respect to \textsl{cobase changes} (homotopy pushouts) constructed along homomorphisms
 from $\CA$, i.e. if 
 $\mathfrak{d}:\ C\to B\to A\to C[1]$ is a triangle in $\frakE$ and $\beta:C\to Z$ is a homomorphism in $\CA$ then 
 the bottom triangle $\beta\mathfrak{d}$ in every homotopy cartesian diagram 
 \[\xymatrix{           
  \mathfrak{d}: & C\ar[r]\ar[d]^\beta & B\ar[d]\ar[r]    & A\ar[r]\ar@{=}[d]   & C[1]\ar[d]^{\beta[1]}\\
  \beta\mathfrak{d}: & Z\ar[r]           & Y\ar[r]    & A\ar[r]     & Z[1]         }\]
is in $\frakE$.
\end{enumerate} 

If $\frakE$ is an almost exact structure then a triangle 
$$\mathfrak{d}: A\overset{f}\to B\overset{g} \to C\overset{\varphi}\to A[1]$$ 
which lies in  $\frakE$ will be called an \textsl{$\frakE$-triangle}.
Moreover, we will say that \begin{itemize} \item  $f$ is an 
\textsl{$\frakE$-inflation}, 
\item $g$ is an \textsl{$\frakE$-deflation}, and 
\item $\varphi$ is an \textsl{$\frakE$-phantom}. 
\end{itemize}
The class of all $\frakE$-phantoms is denoted by $\Ph(\frakE)$. 

Note that an almost exact structure $\frakE$ as in the definition above depends both on the triangulated category 
$\CT$ and the full subcategory $\CA$. Therefore, whenever we refer to an almost exact structure we assume that it is constructed in an extension 
closed subcategory $\CA$ of a triangulated category $\CT$.

We present here some standard examples of almost exact structures:

\begin{example}
Let $\CA$ be an extension closed subcategory of a triangulated category. Then $\mathfrak{D}_\CA$ is an almost exact structure in $\CA$. 
Moreover, the class $\mathfrak{D}^0_\CA$ of all splitting triangles from $\mathfrak{D}_\CA$ is also an almost exact structure. 
\end{example}

\begin{example}\label{exemplul-abelian}
Let $\CA$ be an abelian category, and we denote by $\mathbf{D}(\CA)$ the derived category associated to $\CA$. 
Then we can embed $\CA$ in $\mathbf{D}(\CA)$ as a full subcategory closed with respect to extensions by 
identifying every object $A\in\CA$ with the 
stalk complex $A^\bullet$ concentrated in $0$ such that $A^\bullet[0]=A$. Then it is obvious that the class 
$\frakE$ of all exact sequences in $\CA$ is an almost exact structure.  { Note that it is possible that the collection
of all homomorphisms $\mathbf{D}(\CA)(X,Y)$  in 
$\mathbf{D}(\CA)$ is not a set. We ignore this set theoretic difficulty since in what we do in this paper 
we can enlarge the universe. } 
\end{example}

\begin{example} 
Recall that a pair of subcategories $(\X,\mathcal{Y})$ in $\CT$ is called a \textsl{torsion theory} if $\CT(X,Y)=0$ for all $X\in\X$ and all 
$Y\in\Y$ and for every $A\in\CT$ there is a triangle \[\mathfrak{d}_A: X_A\overset{i_A}\to A\overset{j_A}\to Y_A\to X_A[1],\] 
with $X_A\in\X$ and $Y_A\in\Y$ (see \cite[Definition 2.2]{Iy-Yo-08}). Note that in this definition it is not required any closure of $\X$ and/or $\Y$ under 
shift functors. If $(\X,\Y)$ is a torsion theory in $\CT$ such that $\X[1]\subseteq\X$ and $\Y[-1]\subseteq\Y$ then $(\X,\Y[1])$ is a $t$-structure 
in the sense of \cite{perverse} (see also \cite{AJS}). 

Generalizing the previous example, let $(\X,\Y)$ be a $t$-structure in $\CT$. Then the heart of 
this $t$-structure is, 
by definition, $\CA=\X\cap\Y$. By \cite{perverse}, the full subcategory $\CA$ of $\CT$ is 
abelian. Every short 
exact sequence $0\to C\to B\to A\to 0$ in $\CA$ induces a triangle $C\to B\to A\to C[1]$ in $\CT$,
and the class of all such triangles is an almost exact structure in $\CA$. 
\end{example}

\begin{example}\label{exact-in-triang} 
If $\CA$ is an exact category in the sense of Quillen then it may be embedded as an extension closed subcategory $\CA\subseteq\CA'$ of an abelian category. A sequence of composable homomorphisms $C\to B\to A$ from $\CA$ is a conflation in $\CA$ if and only if it determines a short exact sequence in $\CA'$. Hence every conflation in $\CA$ induces a triangle 
$C\to B\to A\to C[1]$ in $\mathbf{D}(\CA')$. Since the class of all conflations is closed under 
base and cobase changes, it follows that the class of all triangles constructed as above is an almost exact structure in the subcategory $\CA$ 
of $\mathbf{D}(\CA')$.    
\end{example}

\begin{lemma}\label{infl-defl-factors}\label{lemma-factorizari-2}
Let $\frakE$ be an almost exact structure, and let $f:B\to C$ and $g:C\to Y$  be homomorphisms in $\CA$. 
\begin{enumerate}[{\rm (1)}]
\item If $f:B\to C$ is a $\mathfrak{D}_\CA$-inflation and $gf$ is an $\frakE$-inflation then  $f$ is an $\frakE$-inflation. 
\item If $g$ is a $\mathfrak{D}_\CA$-deflation and $gf$ an $\frakE$-deflation then  $g$ is an $\frakE$-deflation.
\end{enumerate}
\end{lemma}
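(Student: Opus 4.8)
We prove (1); statement (2) follows by the dual argument — and although the literal dual of a weak proper class is only required to be closed under products rather than coproducts, this is harmless, since the proof of (1) uses only closure under isomorphisms of triangles and under homotopy pullbacks, whose duals are closure under isomorphisms and homotopy pushouts. The plan for (1) is to produce a triangle witnessing that $f$ is an $\frakE$-inflation as a homotopy pullback of a triangle witnessing that $gf$ is one. Fix a distinguished triangle $\mathfrak{d}\colon B\xrightarrow{f} C\xrightarrow{u} D\xrightarrow{\varphi} B[1]$ with $D\in\CA$ (this exists precisely because $f$ is a $\mathfrak{D}_\CA$-inflation) and a triangle $\mathfrak{d}'\colon B\xrightarrow{gf} Y\xrightarrow{w} F\xrightarrow{\psi} B[1]$ lying in $\frakE$ (this exists because $gf$ is an $\frakE$-inflation); since $\frakE\subseteq\mathfrak{D}_\CA$ we also have $F\in\CA$, so every morphism $D\to F$ belongs to $\CA^{\to}$, as $\CA$ is full.

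Next, apply the axiom TR3 to the commutative square with rows $f\colon B\to C$ and $gf\colon B\to Y$ and verticals $\id_B,\,g$: it extends to a morphism of triangles $(\id_B,g,a)\colon\mathfrak{d}\to\mathfrak{d}'$ for some $a\colon D\to F$, and commutativity of the square involving the connecting maps (whose right-hand vertical is $\id_{B[1]}$) gives $\psi a=\varphi$. Now form the homotopy pullback of $\mathfrak{d}'$ along $a$. By Definition \ref{def-proper-class}(ii) the top triangle $\mathfrak{d}'a$ of the resulting homotopy cartesian diagram lies in $\frakE$, and reading off the rightmost square of that diagram shows that its connecting morphism is $\psi a=\varphi$, so $\mathfrak{d}'a$ has the form $B\to P\to D\xrightarrow{\varphi} B[1]$.

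Finally, identify $\mathfrak{d}$ with $\mathfrak{d}'a$: both are distinguished triangles with third term $D$ and the same connecting morphism $\varphi\colon D\to B[1]$, so rotating each one step backwards presents them as triangles built on the single morphism $-\varphi[-1]\colon D[-1]\to B$; by the uniqueness of the cone (complete the identity on the first two terms via TR3, then use that a morphism of triangles which is an isomorphism on two of the three terms is an isomorphism on the third) they are isomorphic through an isomorphism that is the identity on $B$ and on $D$. Since $\frakE$ is closed under isomorphisms of triangles, $\mathfrak{d}\in\frakE$, i.e.\ $f$ is an $\frakE$-inflation.

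The argument is entirely formal, and the single point requiring care is the last step: in a triangulated category we have only \emph{homotopy} pullbacks, so one cannot simply declare the homotopy pullback of $\mathfrak{d}'$ along $a$ to be $\mathfrak{d}$, and must instead argue separately that the two triangles coincide up to isomorphism — this is exactly where uniqueness of the cone is used. It also explains the asymmetry in the hypotheses: one needs $D\in\CA$ so that $a\in\CA^{\to}$ and the homotopy pullback along $a$ stays inside $\frakE$, which is precisely the assumption that $f$ is a $\mathfrak{D}_\CA$-inflation.
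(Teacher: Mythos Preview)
Your proof is correct and follows essentially the same route as the paper: complete $f$ and $gf$ to triangles $\mathfrak{d}$ and $\mathfrak{d}'$, extend $(1_B,g)$ to a morphism of triangles $(1_B,g,a)\colon\mathfrak{d}\to\mathfrak{d}'$ via TR3, and conclude from closure under base changes. The only difference is that the paper stops there, whereas you add a further step --- forming a separate homotopy pullback $\mathfrak{d}'a$ and then invoking uniqueness of cones to identify it with $\mathfrak{d}$. This extra step is correct but unnecessary: the phrasing of Definition~\ref{def-proper-class}(ii) (``the top triangle \dots\ in \emph{every} homotopy cartesian diagram \dots\ is in $\frakE$'') already covers any morphism of triangles with identity on the left-hand entry, so the morphism $(1_B,g,a)$ itself exhibits $\mathfrak{d}$ as a base change of $\mathfrak{d}'\in\frakE$, giving $\mathfrak{d}\in\frakE$ directly.
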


\begin{proof}
It is enough to prove (1). Consider the triangles \[\ B\overset{f}\to C\to A\to B[1]\]  and 
\[\ B\overset{gf}\to Y\to X\to B[1],\] and we observe that they are in $\mathfrak{D}_\CA$. Then the pair $(1_B, g)$ induces
a homomorphism of triangles. The conclusion follows from the fact that $\frakE$ is closed with respect to  base changes. 
%
\end{proof}

\subsection{Ideals versus phantom ideals}

Recall that an \textsl{ideal} $\mathcal{I}$ in $\CA$ is a class of homomorphisms in $\CA^\to$ which is closed with respect to  sums of homomorphisms, 
and for every chain 
of composable  homomorphisms $A\overset{f}\to B\overset{i}\to C\overset{g}\to D$ in $\CA$, 
if $i\in\CI$ then $gif\in\CI$, or equivalently, $\CI(-,-)$ is a subfunctor of the Hom-bifunctor $\CA(-,-)$.

\begin{example}\label{exemple-obiect-ideale}
 For a class $\X$ of objects in $\CA$ which is closed with respect to
finite direct sums we put 
\[\Ideal\X=\{i\in\CA^\to\mid i\hbox{ factors through an object }X\in\CX\}.\]
It is not hard to see that $\Ideal\X$ is an ideal, and it is called the \textsl{object ideal} associated with $\X$. 

Conversely, for every ideal $\CI$ in $\CA$ we construct \textsl{the class of objects of }$\CI$ by:
\[{\rm Ob}(\CI)=\{X\in\CA\mid 1_X\in\CI\}.\]
\end{example}

Obviously, for every class $\X$ of object in $\CA$ we have ${\rm \Ob}(\Ideal\X)=\X$, and for every ideal $\CI$ of $\CA$ we have $\Ideal{{\rm Ob}(\CI)}\subseteq\CI$. An ideal is \textsl{an object ideal} 
if and only if $\Ideal{{\rm Ob}(\CI)}=\CI$. 


\begin{definition} A class of homomorphisms $\CE$ in $\CT$ is called a \textsl{phantom $\CA$-ideal} if
\begin{enumerate}[(i)] 
\item $\CE\subseteq \CT(\CA,\CA[1])=\bigcup_{A,B\in \CA}\CT(A,B[1]),$
\item $\CE$ is closed with respect to  sums of homomorphisms, 
and 
\item $\CA^{\to}[1]\CE\CA^\to\subseteq \CE$, i.e. for every chain 
of composable  homomorphisms $$A\overset{f}\to B\overset{i}\to C[1]\overset{g[1]}\to D[1]$$ in $\CT$ 
such that $i\in\CE$ and $f,g\in\CA^\to$ we have $g[1]if\in\CE$.
\end{enumerate}
\end{definition}
 
\begin{remark}\label{rem-phantom-ideal} \begin{enumerate}[(a)] \item Since we work in additive categories, as in the case of ideals, we can replace the condition (ii) in the definition of 
phantom $\CA$-ideals by 
\begin{itemize}
\item[(ii')] $\CE$ is closed with respect to finite direct sums of homomorphisms.  
\end{itemize}
\item A phantom $\CA$-ideal $\CE$ can be defined equivalently as a subfunctor $\CE:\CA^{op}\times\CA[1]\to\Ab$, 
$\CE(A,C)=\{\varphi:A\to C[1]\mid \varphi\in \CE\}$ of the shifted 
Hom bifunctor $\CT(-,-[1])$.
 \item If $\CA[1]=\CA$, in particular for $\CA=\CT$, then  a class of homomorphisms $\CI$ is a phantom $\CA$-ideal if and only if it is an ideal in $\CA$.
\end{enumerate}
\end{remark}





In fact, as in the case of proper classes studied in \cite{Bel2000}, there is an 1-to-1 correspondence between almost exact structures and 
phantom $\CA$-ideals. This is described in the following proposition, whose proof is a simple exercise.

\begin{proposition}\label{pseudo-ph-vs-wpc}
Let $\CT$ be a triangulated category, and $\CA$ a full subcategory which is closed under extensions. 
The following are equivalent for class $\frakE\subseteq \frakD_\CA$ of triangles in $\CA$ which is closed 
with respect to isomorphisms:
\begin{enumerate}[{\rm (a)}]
 \item $\frakE$ is an almost exact structure;
 \item $\CA^{\to}[1]\Ph(\frakE)\CA^\to\subseteq \Ph(\frakE)$ and $\Ph(\frakE)$ is closed with respect to (direct) sums of homomorphisms.
\end{enumerate}

Consequently, \begin{enumerate}[{\rm (i)}]
               \item              
If $\frakE$ is an almost exact structure from $\CA$ then $\Ph(\frakE)$ is a phantom $\CA$-ideal, 
and 
\item for every phantom $\CA$-ideal $\CI$ the class $$\frakD(\CI)=\{\mathfrak{d}\in\frakD\mid \hbox{the phantom of }\mathfrak{d}\hbox{ is in }\CI\}$$ 
is an almost exact structure.
\item The correspondences from {\rm (i)} and {\rm (ii)} above are inverse to each other. 
\end{enumerate}
\end{proposition}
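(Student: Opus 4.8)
The plan is to establish the equivalence (a) $\Leftrightarrow$ (b) and then read off (i) and (ii) as immediate consequences, using the bifunctor isomorphism $\Ph:\frakE(-,-)\to\Ph(\frakE)(-,-[1])$ recorded just before the statement. The key observation is that each of the three defining conditions for a weak proper class in Definition \ref{def-proper-class} translates, via $\Ph$, into a closure property of the class of third maps $\Ph(\frakE)\subseteq\Hom(\CA,\CA[1])$. First I would record the dictionary: for a triangle $\mathfrak{d}:C\to B\to A\overset{\gamma}\to C[1]$ in $\frakE$ and a map $\alpha:X\to A$ in $\CA^\to$, the base-change triangle $\mathfrak{d}\alpha$ has third map $\gamma\alpha$; dually, for $\beta:C\to Z$, the cobase-change triangle $\beta\mathfrak{d}$ has third map $\beta[1]\gamma$. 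Thus condition (ii) of Definition \ref{def-proper-class} says exactly that $\Ph(\frakE)\CA^\to\subseteq\Ph(\frakE)$ — with the caveat that one must check the object $X$ obtained by extension lies in $\CA$, which is precisely the extension-closure hypothesis on $\CA$ applied to the triangle $C\to Y\to X\to C[1]$ — and condition (iii) says $\CA^\to[1]\,\Ph(\frakE)\subseteq\Ph(\frakE)$. Composing these two, and noting $\CA^\to[1]\,\Ph(\frakE)\,\CA^\to\subseteq\CA^\to[1]\,\Ph(\frakE)\subseteq\Ph(\frakE)$, gives one implication; conversely, specializing $f$ or $g$ to identities recovers (ii) and (iii) separately.

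Next I would handle the closure conditions on split triangles, isomorphisms, and coproducts in condition (i) of Definition \ref{def-proper-class}. Under $\Ph$, a split triangle corresponds to the zero map, so "$\frakE$ contains all split triangles" becomes "$0\in\Ph(\frakE)(A,C)$ for all $A,C$", which is automatic once $\Ph(\frakE)$ is a subgroup-valued subfunctor — hence this is subsumed once we know $\Ph(\frakE)$ is closed under sums (it contains $\gamma-\gamma=0$). Closure under coproducts of triangles translates, via the identification of $\Ph$ with a map of bifunctors and the fact that $\frakE(-,-)$ and $\Hom(-,-[1])$ both commute with coproducts in each variable appropriately, into closure of $\Ph(\frakE)$ under (direct) sums of homomorphisms; this is where the Remark replacing (ii) by (ii') for phantom ideals is used, so that finite direct sums suffice in the additive setting and the stated condition in (b) is the right one. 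Isomorphism-closure of $\frakE$ is built into the hypothesis on the class and matches the well-definedness of $\Ph$ as a functor, so nothing extra is needed there. Assembling: (a) holds iff $\CA^\to[1]\,\Ph(\frakE)\,\CA^\to\subseteq\Ph(\frakE)$ and $\Ph(\frakE)$ is closed under sums, which is (b).

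For the "Consequently" part: (i) is nothing but the implication (a) $\Rightarrow$ (b) together with the definition of a phantom $\CA$-ideal — the three axioms (i)–(iii) of that definition are exactly the containment in $\Hom(\CA,\CA[1])$ (immediate since $\frakE\subseteq\frakD_\CA$, so every third map goes $A\to C[1]$ with $A,C\in\CA$), closure under sums, and the two-sided absorption $\CA^\to[1]\,\CE\,\CA^\to\subseteq\CE$. For (ii), given a phantom $\CA$-ideal $\CI$, I would first check that $\frakD(\CI)\subseteq\frakD_\CA$: if $\mathfrak{d}:C\to B\to A\to C[1]$ is a distinguished triangle with $\Ph(\mathfrak{d})\in\CI\subseteq\Hom(\CA,\CA[1])$, then $A,C\in\CA$, so $B\in\CA$ by extension-closure, hence $\mathfrak{d}\in\frakD_\CA$. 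Then the base- and cobase-change descriptions of third maps, together with $\CA^\to[1]\,\CI\,\CA^\to\subseteq\CI$ and closure of $\CI$ under sums, show $\frakD(\CI)$ satisfies (b), hence (a) by the equivalence, so it is a weak proper class.

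The main obstacle is bookkeeping rather than depth: one must be careful that the objects appearing in base/cobase change diagrams genuinely lie in $\CA$ (so that the diagrams take place "from $\CA$" as the definitions require), and this is exactly where extension-closure of $\CA$ is invoked each time; and one must check that the homotopy-cartesian diagrams in Definition \ref{def-proper-class} do produce the asserted third maps $\gamma\alpha$ and $\beta[1]\gamma$ — this is the standard $3\times 3$ / octahedral computation and is precisely the content encoded in the bifunctor isomorphism $\Ph$, so I would cite that rather than redo it. Everything else is, as the paper says, "a simple exercise".
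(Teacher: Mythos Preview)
Your proposal is correct and is exactly the routine verification the paper has in mind; indeed the paper gives no proof at all, merely remarking that ``the proof is a simple exercise'' and referring to the bifunctor isomorphism $\Ph:\frakE(-,-)\to\Ph(\frakE)(-,-[1])$ recorded just before the statement. Your dictionary---base change along $\alpha$ replaces the third map $\gamma$ by $\gamma\alpha$, cobase change along $\beta$ replaces it by $\beta[1]\gamma$, coproducts of triangles correspond to direct sums of third maps, split triangles to zero maps---is precisely the translation that makes (a)$\Leftrightarrow$(b) tautological, and your care with extension-closure of $\CA$ (to keep the middle term in $\CA$) and with the empty-sum/zero-map case is appropriate.
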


\begin{example}
Let $\mathcal{B}$ be a class of objects in $\CT$ { which is closed with respect to finite direct sums,} 
and let $\frakE$ be an almost exact structure in $\CA$. 
Then $\CB$ induces 
an almost exact structure $\frakF_\CB\subseteq \frakE$ defined by the condition 
$$\Ph(\frakF_\CB)=\{\varphi\in\Ph(\frakE)\mid \varphi \text{ factorizes through an object } X\in\CB\}.$$ 

As a particular example we mention that the ideal $\CI$ used in \cite[Theorem A]{Krause-tel} can be viewed as a phantom 
$\CA$-ideal: If $\CT$ is a compactly generated triangulated category and $\CT_{0}$ is the full subcategory of all compact
objects in $\CT$ then every class $\mathcal{B}$ of objects in $\CT$ which is closed with respect to direct sums induces an ideal 
$\CI_{\mathcal{B}}$ in $\CT_0$ which consists in all homomorphisms between compact objects which factorize through objects from $\CB$.
\end{example}

We already noticed that phantom $\CA$-ideals and ideals in $\CA$ are different notions, unless $\CA$ is closed under suspension. For avoiding confusions, and having in mind the above correspondence,  
we prefer to work with almost exact structures instead of phantom $\CA$-ideals, whenever this is possible. However we 
keep the notion of phantom ideals because the particular situation when they are genuine ideals is a motivating example (see \cite{Bel2000}).

Moreover, as in \cite[Section 2.4]{Bel2000}, we can apply Baer's theory techniques to almost exact structures. 
Let $\frakE$ be an almost exact structure.  
Two $\frakE$-triangles 
$\mathfrak{d}$ and $\mathfrak{d'}$ as in the next commutative diagram  are \textsl{equivalent} if there is a homomorphism of triangles of the form:
\[ \xymatrix{\mathfrak{d}: & C\ar[r]\ar@{=}[d] & B'\ar[d]^{\beta}\ar[r]& A\ar@{=}[d]\ar[r] & C[1]\ar@{=}[d]\\
	     \mathfrak{d'}:& C\ar[r]      & B\ar[r]     & A\ar[r]           & C[1].}\]
In this case we know that $\beta$ has to be an isomorphism, and we defined an equivalence relation on the class of all $\frakE$-triangles 
starting in $C$ and ending in $A$. Since two $\frakE$-triangles are equivalent if and only if they have the same phantom, it is easy to see that the class of all $\frakE$-triangles starting in $C$ and ending in $A$ is a set modulo the equivalence of triangles. We denote by $\frakE(A,C)$ this set.
Using base and cobase changes we can define a sum on the set $\frakE(A,C)$, and we have an additive bifunctor:
\[\frakE(-,-):\CA^{op}\times\CA\to\Ab\]
which associates to every pair $(A,C)$ of objects from $\CA$ the group $\frakE(A,C)$ of all $\frakE$-triangles 
$C\to B\to A\to C[1]$ modulo the equivalence of triangles. It is not hard to see that by assigning to each triangle its phantom map we get an isomorphism of bifunctors
\[\frakE(-,-)\to\Ph(\frakE)(-,-[1]).\]

\begin{remark} (\textsl{Base-cobase and cobase-base changes})
Let $\frakE$ be an almost exact structure and let $\mathfrak{d}:\  C\to B\to A\overset{\varphi}\to C[1]$ be a triangle in $\frakE$. 
If $\alpha:X\to A$ and $\beta:C\to Y$ are two homomorphisms, we can construct a triangle $\mathfrak{d}\alpha$ as a 
homotopy pullback of $\mathfrak{d}$ along $\alpha$, and then a triangle $\beta(\mathfrak{d}\alpha)$ as a homotopy pushout of
$\mathfrak{d}\alpha$ along $\beta$. We can also construct a triangle $\beta \mathfrak{d}$ as a homotopy pushout, and then 
$(\beta\mathfrak{d})\alpha$ as a homotopy pullback. It is easy to see that both triangles $\beta(\mathfrak{d}\alpha)$ and $(\beta\mathfrak{d})\alpha$ have the same phantom map, namely $\beta[1]\varphi\alpha$, hence they  are equivalent.
\end{remark}



An almost exact structure $\frakE$ is called an \textsl{exact structure} provided that it satisfies one of the equivalent conditions in the following:  

\begin{lemma} Let $\CA$ be a full subcategory  of a triangulated category $\CT$. If $\CA$ is closed under extensions   and $\frakE\subseteq\frakD_\CA$ is an almost exact structure, the following are equivalent: 
\begin{enumerate}[{\rm (a)}]
\item If $A,C,Y\in\CA$, $i:C\to Y$ is an $\frakE$-inflation and $\phi:A\to C[1]$, then $i[1]\phi\in\Ph(\frakE)$ implies $\phi\in\Ph(\frakE)$. 
\item  If the commutative diagram  
\[\xymatrix{           
  \mathfrak{d}: & C\ar[r]\ar[d]^{i} & B\ar[d]\ar[r]    & A\ar[r]\ar@{=}[d]   & C[1]\ar[d]^{i[1]}\\
i\mathfrak{d}:  & Y\ar[r]           & Z\ar[r]    & A\ar[r] & Y[1]          
}\] is obtained from the triangle $\mathfrak{d}\in\mathfrak{D}_\CA$ by a cobase change along an $\frakE$-inflation $i$,
such that the bottom triangle is in $\frakE$, then the top triangle $\mathfrak{d}$ lies also in $\frakE$.
\item If $A,C,X\in\CA$, $p:X\to A$ is an $\frakE$-deflation and $\phi:A\to C[1]$ then $\phi p\in\Ph(\frakE)$ implies $\phi\in\Ph(\frakE)$. 
\item If the commutative diagram  
\[\xymatrix{           
  \mathfrak{d}p: & C\ar[r]\ar@{=}[d] & Y\ar[d]\ar[r]    & X\ar[r]\ar[d]^{p}   & C[1]\ar@{=}[d]\\
  \mathfrak{d}: & C\ar[r]           & B\ar[r]    & A\ar[r] & C[1]          \\
}\] is obtained from the bottom triangle $\mathfrak{d}\in\mathfrak{D}_\CA$ by base change along an $\frakE$-deflation $p$,
such that the top triangle $\mathfrak{d}p$ is in $\frakE$, then $\mathfrak{d}\in \frakE$.
\end{enumerate}
\end{lemma}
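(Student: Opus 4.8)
The plan is to prove the cycle of implications (a) $\Rightarrow$ (b) $\Rightarrow$ (a) and (c) $\Rightarrow$ (d) $\Rightarrow$ (c), and then to observe that (a) and (c) are equivalent because each is just the assertion that the bifunctor isomorphism $\Ph$ carries the phantom-ideal closure property for one side over to the other. Concretely, recall from the discussion preceding the lemma that $\Ph:\frakE(-,-)\to\Ph(\frakE)(-,-[1])$ is an isomorphism of bifunctors, and that for a triangle $\mathfrak d: C\to B\to A\xrightarrow{\varphi} C[1]$ in $\mathfrak{D}_\CA$ one has $\Ph(i\mathfrak d)=i[1]\varphi$ and $\Ph(\mathfrak d p)=\varphi p$; these identities translate the diagrammatic conditions (b), (d) into the phantom conditions (a), (c) almost verbatim.

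First I would do (a) $\Leftrightarrow$ (b). Given the cobase-change diagram in (b), its top triangle $\mathfrak d$ has $\Ph(\mathfrak d)=\phi$ for some $\phi:A\to C[1]$, and the bottom triangle $i\mathfrak d$ has phantom $i[1]\phi$. If the bottom triangle lies in $\frakE$ then $i[1]\phi\in\Ph(\frakE)$, so by (a) (with the roles $C\rightsquigarrow C$, $Y\rightsquigarrow Y$) we get $\phi\in\Ph(\frakE)$; since $\frakE$ is closed under isomorphisms and determined up to equivalence by its phantoms (Proposition \ref{pseudo-ph-vs-wpc}, via $\frakD(\Ph(\frakE))=\frakE$ for saturated-or-not weak proper classes — actually just: a triangle with phantom in $\Ph(\frakE)$ is equivalent to one in $\frakE$), the top triangle lies in $\frakE$. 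Conversely, given (b) and a map $\phi:A\to C[1]$ with $i[1]\phi\in\Ph(\frakE)$, I would realize $\phi$ as the phantom of a triangle $\mathfrak d\in\mathfrak{D}_\CA$, push out along the $\frakE$-inflation $i$ to get $i\mathfrak d$ with phantom $i[1]\phi$, which therefore lies in $\frakE$; then (b) forces $\mathfrak d\in\frakE$, i.e. $\phi=\Ph(\mathfrak d)\in\Ph(\frakE)$. The argument for (c) $\Leftrightarrow$ (d) is strictly dual, replacing cobase changes by base changes, $\frakE$-inflations by $\frakE$-deflations, and $i[1]\phi$ by $\phi p$.

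It remains to connect the "inflation" conditions (a)/(b) with the "deflation" conditions (c)/(d). I would argue that both are equivalent to a single intrinsic statement about $\Ph(\frakE)$: namely that $\Ph(\frakE)$ is \emph{saturated} as a phantom ideal in the sense that whenever $\psi$ is a map admitting a factorization $\psi=\theta\circ\phi$ with $\theta$ (or, in the deflation case, $\phi$) "coming from an $\frakE$-(co)deflation" and $\psi\in\Ph(\frakE)$, the other factor is already in $\Ph(\frakE)$. The key point is that an $\frakE$-inflation $i:C\to Y$ fits into an $\frakE$-triangle $C\xrightarrow{i} Y\to W\to C[1]$, and an $\frakE$-deflation fits into an $\frakE$-triangle dually; applying the long exact sequences obtained by $\Hom(A,-)$ and $\Hom(-,C[1])$ to these $\frakE$-triangles, together with the composition rule $\CA^\to[1]\Ph(\frakE)\CA^\to\subseteq\Ph(\frakE)$, lets one pass between the two forms of the saturation hypothesis. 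I would therefore show (a) $\Rightarrow$ (c) by: given an $\frakE$-deflation $p:X\to A$ sitting in an $\frakE$-triangle $K\xrightarrow{j} X\xrightarrow{p} A\xrightarrow{w} K[1]$ and $\phi:A\to C[1]$ with $\phi p\in\Ph(\frakE)$, use the triangle to write things so that hypothesis (a) applied to the $\frakE$-inflation $j$ (or a suitable rotation) yields $\phi\in\Ph(\frakE)$; and symmetrically for (c) $\Rightarrow$ (a).

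The main obstacle I anticipate is exactly this last bridge between the inflation-side and deflation-side conditions: unlike in an exact category, where a morphism that is both a mono-like and epi-like map has rigid behaviour, here we only have weak kernels/cokernels and homotopy (co)cartesian squares, so "the" triangle completing an $\frakE$-inflation or $\frakE$-deflation is only unique up to non-canonical isomorphism, and I must be careful that the long-exact-sequence chase does not secretly use a uniqueness property that fails. The safe route is to phrase everything through the phantom ideal $\Ph(\frakE)$ and the natural bifunctor isomorphism $\Ph$, so that all the diagrammatic ambiguity is absorbed into the equivalence relation on triangles; the composition axiom for phantom $\CA$-ideals and the long exact sequences attached to genuine $\frakE$-triangles should then suffice, with no appeal to uniqueness of completions. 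I expect the write-up to be short once it is organized as: (1) translate (b) $\leftrightarrow$ (a) and (d) $\leftrightarrow$ (c) via $\Ph$; (2) prove (a) $\leftrightarrow$ (c) by chasing the long exact sequences of the $\frakE$-triangles containing a given $\frakE$-inflation resp. $\frakE$-deflation.
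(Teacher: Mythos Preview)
Your treatment of (a)$\Leftrightarrow$(b) and (c)$\Leftrightarrow$(d) is correct and matches the paper: both are immediate from the identity $\Ph(i\mathfrak d)=i[1]\Ph(\mathfrak d)$, $\Ph(\mathfrak dp)=\Ph(\mathfrak d)p$ and the correspondence $\frakE\leftrightarrow\Ph(\frakE)$.

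The gap is in your bridge (a)$\Rightarrow$(c). You propose to apply (a) to the $\frakE$-inflation $j:K\to X$ coming from the $\frakE$-triangle $K\xrightarrow{j}X\xrightarrow{p}A\xrightarrow{w}K[1]$ of the deflation $p$. But condition (a) requires an $\frakE$-inflation whose \emph{domain is $C$}, so that the composite $i[1]\phi$ is defined; the map $j$ has domain $K$, and no rotation of that triangle helps (weak proper classes are not assumed closed under rotation, and in any case the rotated maps do not compose with $\phi$ in the right shape). Your appeal to ``long exact sequences in $\Hom$'' does not produce information about membership in $\Ph(\frakE)$: exactness tells you when a composite is zero, not when it lies in a given phantom ideal.

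The missing idea, which the paper uses, is to extract the needed $\frakE$-inflation from the \emph{other} datum: since $\phi p\in\Ph(\frakE)$, complete $\phi p$ to an $\frakE$-triangle $C\xrightarrow{i}Y\to X\xrightarrow{\phi p}C[1]$; now $i:C\to Y$ is an $\frakE$-inflation with the correct domain. Rotating this triangle to $Y\to X\to C[1]\xrightarrow{-i[1]}Y[1]$ and building the morphism of triangles from $B\to X\xrightarrow{p}A\xrightarrow{\psi}B[1]$ (with third vertical map $\phi$) shows that $-i[1]\phi$ factors through $\psi\in\Ph(\frakE)$, hence $i[1]\phi\in\Ph(\frakE)$ by the phantom-ideal property; now (a) gives $\phi\in\Ph(\frakE)$. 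Once you supply this, the rest of your outline goes through; (c)$\Rightarrow$(a) is then dual, as you note.
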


\begin{proof}
The equivalences (a)$\Leftrightarrow$(b) and (c)$\Leftrightarrow$(d) are obvious. Moreover, (a)$\Rightarrow$(c) and (c)$\Rightarrow$(a) are dual to each other, so it
is enough to prove (a)$\Rightarrow$(c).

{ Let $p:X\to A$ be an $\frakE$-deflation and let $\phi:A\to C[1]$ be a map such that $A,C,X\in\CA$ and $\phi p\in\Ph(\frakE)$. Completing both $p$ and $\phi p$ to triangles we obtain the following commutative diagram: 
\[\xymatrix{           
 & B\ar[r]\ar[d] & X\ar@{=}[d]\ar[r]^{p}    & A\ar[r]^{\psi}\ar[d]^{\phi}   & B[1]\ar[d]\\
 C\ar[r]^{i} & Y\ar[r]           & X\ar[r]^{\phi p}    & C[1]\ar[r]^{-i[1]} & Y[1]          \\
}\]  
By hypothesis, $\psi\in\Ph(\frakE)$, $i$ is an $\frakE$-inflation and $i[1]\phi\in\Ph(\frakE)$. Then (a) implies
$\phi\in\Ph(\frakE)$. 
}
\end{proof}


\begin{example}\label{fantome-clasice-gen} For the case $\CA=\CT$, exact structures which are closed under all suspensions are studied in \cite{Bel2000} under the name \textsl{proper classes of triangles}. Recall that in this case phantom ideals coincide with genuine ideals. 
We mention here a basic example: If $\mathcal{H}$ is a class of objects in $\CT$ such that 
$\mathcal{H}[1]=\mathcal{H}$ then the class $\frakE_\mathcal{H}$ of all triangles $A\to B\to C\to A[1]$ such that 
the sequences of abelian groups $$0\to \CT(H,A)\to \CT(H,B)\to \CT(H,C)\to 0$$ are exact for all $H\in \mathcal{H}$ is 
an exact structure. It is easy to see that 
$$\Ph(\frakE_\mathcal{H})=\{f\mid \CT(H,f)=0 \text{ for all } H\in\mathcal{H}\}.$$ Since $\mathcal{H}$ is closed under suspensions (i.e. 
$\mathcal{H}[1]=\mathcal{H}$) then $\Ph(\frakE_\mathcal{H})$ is also closed under suspensions.  

{ In particular, we mention here the case when $\CT$ is compactly generated and $\mathcal{H}$ is the class of all 
compact objects in $\CT$. Then $\Ph(\frakE_\mathcal{H})$ is the class of classical phantom maps (the maps $\phi$ 
for which $\CT(H,\phi)=0$ for all compacts $H\in\CT$, \cite{Krause-tel}). Actually this example motivates the name `phantom' chosen for
$\Ph(\frakE)$.  }
\end{example}




The next proposition shows that relative to an exact structure the composition of two inflations (deflations) 
must be an inflation (respectively a deflation). Therefore exact structures satisfy all triangulated versions for the axioms 
of  exact categories
(see for example \cite[Definition 2.1]{Bu10}).  

\begin{proposition}\label{composed-inflations}
 If $\frakE$ is a exact structure then 
 \begin{enumerate}[{\rm (1)}]
  \item The composition of two $\frakE$-inflations is an $\frakE$-inflation.
  \item The composition of two $\frakE$-deflations is an $\frakE$-deflation.
 \end{enumerate}
\end{proposition}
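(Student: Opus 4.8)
The plan is to prove only~(1), since~(2) is dual. Let $f\colon A\to B$ and $g\colon B\to C$ be two $\frakE$-inflations; I want to show $gf$ is an $\frakE$-inflation. First I would complete $f$, $g$, and $gf$ to triangles in $\frakD_\CA$:
\[
A\overset{f}\to B\to A'\to A[1],\qquad
B\overset{g}\to C\to B'\to B[1],\qquad
A\overset{gf}\to C\to X\to A[1],
\]
noting that $A'$ and $B'$ lie in $\CA$ because $\CA$ is extension closed, and that $X\in\CA$ for the same reason applied to the third triangle (once we know it is distinguished, $A,C\in\CA$ force $X\in\CA$). The classical octahedral axiom applied to the composition $gf$ then produces a fourth triangle $A'\to X\to B'\to A'[1]$ and a commutative diagram linking all four; in particular $A'\to X$ is a $\frakD_\CA$-inflation.

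Next I would bring $\frakE$ into the picture. Since $f$ is an $\frakE$-inflation the triangle $A\to B\to A'\to A[1]$ is an $\frakE$-triangle, so its phantom $\mathrm{Ph}$ lies in $\Ph(\frakE)$; similarly for $g$. The octahedron gives that the phantom of the fourth triangle $A'\to X\to B'\to A'[1]$ factors appropriately through the phantoms of the first two via morphisms in $\CA^\to$, so by the phantom-ideal closure property of $\Ph(\frakE)$ (Proposition~\ref{pseudo-ph-vs-wpc}) this fourth triangle is also an $\frakE$-triangle; in other words $A'\to X$ is an $\frakE$-inflation. Now I have: $A'\to X$ is an $\frakE$-inflation, and the composite $A\to X$ in the octahedral diagram, namely the composite of $A\to A'$ (a $\mathfrak{D}_\CA$-deflation, being the connecting map side of... ) — here I must be careful about which maps compose — rather, the relevant observation is that in the octahedron the map $X\to B'$ sits in the triangle $A'\to X\to B'\to A'[1]$, and the map $C\to X$ from the third triangle together with $B\to C$ etc. give the needed factorisations. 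The cleanest route is: apply Lemma~\ref{lemma-factorizari-2}(1) to the pair of maps $A'\xrightarrow{} X$ and $X\xrightarrow{} B'[\text{shift}]$...

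Let me state the intended final assembly more honestly. From the octahedron one extracts that the third triangle $A\overset{gf}\to C\to X\to A[1]$ is obtained by a base change of an $\frakE$-triangle along a map in $\CA^\to$, or equivalently its phantom $A\to X[{-1}]$... The key computation is that $\mathrm{Ph}$ of the $gf$-triangle equals $(\text{map in }\CA^\to)\circ \mathrm{Ph}(\text{of the }f\text{-triangle})$ composed/added with a term coming from the $g$-triangle, both of which lie in $\Ph(\frakE)$; since $\Ph(\frakE)$ is an ideal under the closure $\CA^\to[1]\,\Ph(\frakE)\,\CA^\to\subseteq\Ph(\frakE)$ and closed under sums, the phantom of the $gf$-triangle is in $\Ph(\frakE)$, hence by Proposition~\ref{pseudo-ph-vs-wpc}(ii) the $gf$-triangle lies in $\frakE$, i.e.\ $gf$ is an $\frakE$-inflation. \emph{I expect the main obstacle to be precisely this bookkeeping:} identifying, from the octahedral diagram, exactly how $\mathrm{Ph}$ of the composite triangle decomposes in terms of the phantoms of the two given $\frakE$-triangles, and checking that saturation is not in fact needed for the inflation statement once one argues at the level of phantom ideals. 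An alternative, possibly smoother, assembly avoids the octahedron: realise $X$ via a homotopy pushout of the $g$-triangle along $A'$ — but as noted in the introduction we lack uniqueness for homotopy pushouts, so the phantom-ideal argument above is the safer one, and where I would ultimately invoke that $\Ph(\frakE)$ is a phantom $\CA$-ideal together with Lemma~\ref{lemma-factorizari-2} to conclude.
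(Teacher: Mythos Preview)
Your proposal contains a genuine gap, and the gap is precisely where you suspect trouble but draw the wrong conclusion: you write that you expect to check ``that saturation is not in fact needed for the inflation statement once one argues at the level of phantom ideals.'' In fact saturation \emph{is} needed, and your phantom-ideal bookkeeping cannot be completed without it.

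Here is the issue. With the notation $A\xrightarrow{f}B\to X\xrightarrow{\chi}A[1]$, $B\xrightarrow{g}C\to Z\xrightarrow{\phi}B[1]$, $A\xrightarrow{gf}C\to Y\xrightarrow{\psi}A[1]$ for the three triangles, the octahedral axiom produces the fourth triangle $X\to Y\xrightarrow{h}Z\to X[1]$ and exactly two relevant commutativities:
\[
\psi\circ(X\to Y)=\chi\qquad\text{and}\qquad f[1]\circ\psi=\phi\circ h.
\]
Neither of these expresses $\psi$ itself as a composite or sum of elements of $\Ph(\frakE)$. The first says $\chi$ factors through $\psi$; the second says $f[1]\psi\in\Ph(\frakE)$, since $\phi\in\Ph(\frakE)$ and $h\in\CA^\to$. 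Your claim that ``$\mathrm{Ph}$ of the $gf$-triangle equals (map in $\CA^\to$)$\circ\mathrm{Ph}$(of the $f$-triangle) composed/added with a term coming from the $g$-triangle'' is simply not what the octahedron yields; there is no such decomposition of $\psi$. (Your correct side observation that the fourth triangle lies in $\frakE$---its phantom $Z\to X[1]$ equals $(B\to X)[1]\circ\phi$---does not help: that triangle is not the one containing $gf$.)

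The paper's proof uses exactly the relation $f[1]\psi=\phi h\in\Ph(\frakE)$ and then invokes saturation (condition~(a) in the lemma characterizing saturated classes): since $f$ is an $\frakE$-inflation and $f[1]\psi\in\Ph(\frakE)$, one concludes $\psi\in\Ph(\frakE)$, i.e.\ $gf$ is an $\frakE$-inflation. So your octahedral set-up is right, but the final step must go through saturation, not through the phantom-ideal closure alone.
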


\begin{proof}
Let $f:A\to B$ and $g:B\to C$ be two $\frakE$-inflations. Completing them to triangles and using 
the octehedral axiom we construct the commutative diagram, whose rows and columns are triangles:
\[\xymatrix{
                      & C[-1]\ar@{=}[r]\ar[d]                 & C[-1]\ar[d]                   & \\
X[-1]\ar[r]\ar@{=}[d] & Y[-1]\ar[r]^{h[-1]}\ar[d]_{-\psi[-1]} & Z[-1]\ar[r]\ar[d]^{-\phi[-1]} & X\ar@{=}[d]\\
X[-1]\ar[r]           & A\ar[r]^{f}\ar[d]_{gf}               & B\ar[r]\ar[d]^{g}             & X\\
                      & C\ar@{=}[r]                           &C                              &
}.\]
We have $f\psi[-1]=\phi[-1]h[-1]\in\Ph(\frakE)[-1]$ and saturation gives us $\psi\in\Ph(\frakE)$ 
since $f$ is an $\frakE$-inflation. Therefore $gf$ is an $\frakE$-inflation too.
\end{proof}

\section{Precovering and preenveloping ideals}\label{section-precovering}

\subsection{Precovers and preenvelopes}
Let $\CI$ be an ideal in $\CA$, and $A$ an object in $\CA$. 
We say that a homomorphism $i:X\to A$ is an \textsl{$\CI$-precover} for $A$ 
if $i\in\CI$ and all homomorphisms $i':X'\to A$ from $\CI$ factorize through $i$. Dually, 
an \textsl{$\CI$-preenvelope} for an object $B$ in $\CA$ is a homomorphism 
$i:B\to Y$ which lies in $\CI$ such that every other homomorphism $i':B\to Y'$ from $\CI$ 
factorizes through $i$. The ideal $\CI$ is a \textsl{precovering} (\textsl{preenveloping}) ideal
if every object from $\CA$ has an $\CI$-precover ($\CI$-preenvelope).

Because the suspension functor is an equivalence we deduce immediately that, for every $n\in\Z$, 
$i:X\to A$ is an $\CI$-precover for $A$ if and only if $i[n]:X[n]\to A[n]$ is an $\CI[n]$-precover for 
$A[n]$, and a similar statement holds for preenvelopes too. 

We extend these notions for phantom $\CA$-ideals in the following way: if $\CE$ is a phantom $\CA$-ideal and $A$ is an object in $\CA$, 
we say that a homomorphism $\phi:X\to A[1]$ is an \textsl{$\CE$-precover} for $A[1]$ 
if $\phi\in\CE$ and all homomorphisms $\phi':X'\to A[1]$ in $\CE$ factorize through $\phi$. Dually, 
an \textsl{$\CE$-preenvelope} for an object $B$ in $\CA$ is a homomorphism 
$\phi:B\to Y[1]$ which lies in $\CE$ such that every other homomorphism $\phi':B\to Y'[1]$ from $\CE$ 
factorizes through $\phi$. The phantom 
$\CA$-ideal $\CE$ is \textsl{precovering} (\textsl{preenveloping})
if every object from $\CA[1]$ (resp. $\CA$) has an $\CE$-precover ($\CE$-preenvelope). 

In the following we will see that precovers (preenvelopes) are strongly connected with some injective (respectively, projective) 
properties. 

Let $\frakE$ be an almost exact structure in $\CA$. 
We say that a homomorphism $f:X\to A$ from $\CA$ is 
\textsl{$\frakE$-projective} if $f$ is projective with respect to  all triangles in $\frakE$, 
i.e. for every triangle $C\overset{\alpha}\to B\overset{\beta}\to A\overset{\phi}\to C[1]$ in $\frakE$ there is a homomorphism
$\overline{f}:X\to B$ such that $f=\beta\overline{f}$ ($f$ factorizes through all $\frakE$-deflations $B\to A$). 
Dually, $g:C\to Y$ is \textsl{$\frakE$-injective} if $g$ is injective with respect to  all triangles in $\frakE$, i.e.
$f$ factorizes through all $\frakE$-inflations $C\to B$. 
We denote by $\Eproj$ ($\Einj$) the class of all $\frakE$-projective 
(respectively, $\frakE$-injective) homomorphisms.

The proof of the following lemma is straightforward.

\begin{lemma}\label{basic-CI-inj}
Let $\frakE$ be an almost exact structure in $\CA$. 
\begin{enumerate}[{\rm (a)}]
 \item A homomorphism $f:X\to A$ from $\CA$ is $\frakE$-projective if and only if $\Ph(\frakE) f=0$.
 \item  A homomorphism $g:C\to Y$ from $\CA$ is $\frakE$-injective if and only if $g[1]\Ph(\frakE)=0$.
%
 \item $\Eproj$ and $\Einj$ are ideals in $\CA$.
 \item $\frakE[n]\text{-}\mathrm{proj}=(\Eproj)[n]$ and $\frakE[n]\text{-}\mathrm{inj}=(\Einj)[n]$ for all $n\in\Z$
 (where $\frakE[n]$ is viewed as an almost exact structure relative to the full subcategory $\CA[n]$).  
\end{enumerate}
\end{lemma}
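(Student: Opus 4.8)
The plan is to reduce all four statements to a single observation — that, by definition, $\Ph(\frakE)$ is precisely the class of connecting maps $\Ph(\mathfrak d)$ of $\frakE$-triangles $\mathfrak d$ — together with the long exact sequences obtained by applying $\CT(X,-)$ and $\CT(-,Y)$ to a triangle, and with the fact that $\Ph(\frakE)$ is a phantom $\CA$-ideal (Proposition \ref{pseudo-ph-vs-wpc}). For (a), I would fix $f:X\to A$ with $X,A\in\CA$. Any $\frakE$-triangle with third term $A$ has the form $\mathfrak d:\ B\to C\overset{\alpha}\to A\overset{\phi}\to B[1]$ with $\phi=\Ph(\mathfrak d)$, and applying $\CT(X,-)$ produces the exact sequence $\CT(X,C)\xrightarrow{\alpha\circ-}\CT(X,A)\xrightarrow{\phi\circ-}\CT(X,B[1])$; hence $f$ factors through the $\frakE$-deflation $\alpha$ exactly when $\phi f=0$. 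Since $f$ is $\frakE$-projective precisely when it factors through the deflation of \emph{every} such $\mathfrak d$, and since the set of $\psi\in\Ph(\frakE)$ with source $A$ is exactly $\{\Ph(\mathfrak d)\mid \mathfrak d\in\frakE \text{ with third term } A\}$, this is equivalent to $\Ph(\frakE)f=0$.

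Part (b) I would handle as the formal dual: given $g:C\to Y$ and an $\frakE$-triangle with first term $C$, say $\mathfrak d:\ C\overset{u}\to B\to A\overset{\phi}\to C[1]$, I would rotate it to $A[-1]\xrightarrow{-\phi[-1]}C\overset{u}\to B$ and apply $\CT(-,Y)$; the resulting exactness shows that $g$ factors through the $\frakE$-inflation $u$ iff $g\circ\phi[-1]=0$, which, after applying the equivalence $(-)[1]$, becomes $g[1]\phi=0$. Running over all $\frakE$-triangles with first term $C$ then gives that $g$ is $\frakE$-injective iff $g[1]\Ph(\frakE)=0$.

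For (c), I would use (a) to write $\Eproj=\{f\in\CA^\to\mid \Ph(\frakE)f=0\}$: closure under sums is immediate, and if $X'\xrightarrow{h}X\xrightarrow{f}A\xrightarrow{k}A'$ are composable in $\CA$ with $f\in\Eproj$, then any $\psi\in\Ph(\frakE)$ with source $A'$ satisfies $\psi k\in\Ph(\frakE)$ (phantom $\CA$-ideal), so $\psi(kfh)=(\psi k)fh=0$, whence $kfh\in\Eproj$. The claim for $\Einj$ is dual, using instead that $\Ph(\frakE)$ absorbs postcomposition by maps from $\CA^\to[1]$.

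For (d), I would note that the $n$-fold shift of $A\overset{f}\to B\overset{g}\to C\overset{\phi}\to A[1]$ is $A[n]\to B[n]\to C[n]\xrightarrow{(-1)^n\phi[n]}A[n+1]$, so $\Ph(\frakE[n])=\Ph(\frakE)[n]$ as ideals; then, since $(-)[n]$ is an equivalence, part (a) yields for $f'=f[n]$ with $f\in\CA^\to$ the chain $f'\in\frakE[n]\text{-}\mathrm{proj}\iff (\Ph(\frakE)f)[n]=0\iff \Ph(\frakE)f=0\iff f\in\Eproj$, i.e. $\frakE[n]\text{-}\mathrm{proj}=(\Eproj)[n]$, and symmetrically for injectives. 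I do not expect a genuine obstacle here; the only delicate points are keeping track of the triangle rotations and signs, and — in (c) — invoking the full phantom $\CA$-ideal structure of $\Ph(\frakE)$ rather than merely the raw collection of connecting maps.
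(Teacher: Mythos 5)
Your proof is correct and is essentially the straightforward argument the paper has in mind (the paper explicitly omits the proof as an exercise): parts (a) and (b) follow from the long exact sequence applied to $\CT(X,-)$, resp.\ the rotated triangle under $\CT(-,Y)$, together with the observation that the elements of $\Ph(\frakE)$ with source $A$ (resp.\ target $C[1]$) are precisely the connecting maps of $\frakE$-triangles with third term $A$ (resp.\ first term $C$); part (c) then reduces to $\Ph(\frakE)$ being a phantom $\CA$-ideal; and part (d) to $\Ph(\frakE[n])=\Ph(\frakE)[n]$ and the fact that $(-)[n]$ is an autoequivalence. No gaps.
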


{
\begin{corollary}\label{direct-sum-inj-proj}
Let $\frakE$ be an almost exact structure in 
 an extension closed subcategory $\CA$ of the triangulated category $\CT$.
\begin{enumerate}[{\rm (1)}]
\item A homomorphism $\alpha:A\to \prod_{i\in I}B_i$ is $\frakE$-injective if and only if for every $i\in I$ the homomorphism 
$\pi_i\alpha$ is $\frakE$-injective ($\pi_i:\prod_{i\in I}B_i\to B_i$ denote the cannonical projections).

\item A homomorphism $\alpha:\oplus_{i\in I}B_i\to A$ is $\frakE$-projective if and only if for every $i\in I$ the homomorphism 
$\alpha\rho_i$ is $\frakE$-projective ($\rho_i:B_i\to \oplus_{i\in I}B_i$ denote the cannonical injections).
\end{enumerate} 
\end{corollary}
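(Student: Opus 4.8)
The plan is to reduce both equivalences to the homological characterizations of $\frakE$-injectivity and $\frakE$-projectivity recorded in Lemma \ref{basic-CI-inj}: a homomorphism $g$ is $\frakE$-injective precisely when $g[1]\Ph(\frakE)=0$, and $f$ is $\frakE$-projective precisely when $\Ph(\frakE)f=0$. Granting these, the corollary becomes the elementary observation that such a vanishing condition, for a morphism whose codomain is a product (resp.\ whose domain is a coproduct), can be tested one factor at a time.

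For (1) I would set $B=\prod_{i\in I}B_i$ with projections $\pi_i\colon B\to B_i$, and use that the suspension $(-)[1]$, being an autoequivalence of $\CT$, carries this product to the product $\prod_{i\in I}(B_i[1])$ with projections $\pi_i[1]$. By Lemma \ref{basic-CI-inj}(b), $\alpha\colon A\to B$ is $\frakE$-injective iff $\alpha[1]\phi=0$ for every $\phi\in\Ph(\frakE)$ with codomain $A[1]$; and a morphism into a product vanishes iff each of its composites with the projections vanishes, so $\alpha[1]\phi=0$ iff $(\pi_i\alpha)[1]\phi=\pi_i[1]\,\alpha[1]\,\phi=0$ for all $i$. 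Letting $\phi$ run through $\Ph(\frakE)$ and applying Lemma \ref{basic-CI-inj}(b) to each $\pi_i\alpha$ then yields the equivalence.

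Statement (2) I would handle dually, where the suspension functor is not even needed: by Lemma \ref{basic-CI-inj}(a), $\alpha\colon\bigoplus_{i\in I}B_i\to A$ is $\frakE$-projective iff $\phi\alpha=0$ for every $\phi\in\Ph(\frakE)$ with domain $A$, and a morphism out of a coproduct vanishes iff each of its restrictions to the summands vanishes, that is, iff $\phi(\alpha\rho_i)=0$ for all $i$ (with $\rho_i\colon B_i\to\bigoplus_{i\in I}B_i$ the structure maps). Running $\phi$ through $\Ph(\frakE)$ gives exactly that every $\alpha\rho_i$ is $\frakE$-projective.

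The only subtlety I expect is bookkeeping about the ambient category: the argument uses that the product $\prod_iB_i$ and coproduct $\bigoplus_iB_i$ occurring in the statement are computed in $\CT$ (equivalently, that the families $(\pi_i)$ and $(\rho_i)$ are jointly monic, resp.\ jointly epic, in $\CT$), so that vanishing of a $\CT$-morphism can be detected componentwise — for finite $I$ this is automatic, since an extension closed subcategory is closed under finite biproducts. Beyond this point the proof is purely formal and involves no diagram chasing.
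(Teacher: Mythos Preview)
Your proof is correct and follows essentially the same idea as the paper's: the paper's one-line argument says it ``follows from the fact that the family of all canonical projections (injections) associated to a direct product (direct sum) is monomorphic (epimorphic),'' and your proposal is simply the unpacking of this via Lemma~\ref{basic-CI-inj}, translating $\frakE$-injectivity/projectivity into a vanishing condition that can then be tested componentwise. Your remark about the ambient-category bookkeeping is apt, and the paper leaves this implicit.
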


\begin{proof}
This follows from the fact that the family of all canonical projections (injections) associated to a direct product (direct sum) is 
monomorphic (epimorphic).
\end{proof}
}


The above mentioned connection is presented in the following results:

\begin{lemma}\label{Icov-vs-Iinjenv} Let $\frakE$ be an almost exact structure in 
 an extension closed subcategory $\CA$ of $\CT$, and let 
  $$C\overset{f}\to B\overset{g}\to A\overset{\phi}\to C[1]$$ be an $\frakE$-triangle.
  \begin{enumerate}[{\rm (1)}]  
\item If  $\phi$ is an $\Ph(\frakE)$-precover for $C[1]$ and $d:X\to B$ is a homomorphism
 such that $gd=0$ then $d\in\Einj$. In particular $f$ is an $\Einj$-preenvelope of $C$. Consequently,
the map $\phi$ is an $\Ph(\frakE)$-precover for $C[1]$ if and only if $f$ is $\frakE$-injective.
 
\item If  $\phi$ is an $\Ph(\frakE)$-preenvelope for $A$ and $d:B\to X$ is a homomorphism such that 
$df=0$ then $d\in\Eproj$. In particular $g$ is an $\Eproj$-precover for $A$. Consequently, 
the map $\phi$ is an $\Ph(\frakE)$-preenvelope for $A$ if and only if $g$ is $\frakE$-projective.
\end{enumerate}
\end{lemma}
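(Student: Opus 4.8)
The plan is to prove only part (1), since (2) follows by the dual argument in the opposite category (where $\frakE$-deflations become $\frakE$-inflations and the roles of $f$ and $g$ swap). So fix an $\frakE$-triangle $A\overset{f}\to B\overset{g}\to C\overset{\phi}\to A[1]$ and first assume $\phi$ is an $\Ph(\frakE)$-precover for $A[1]$. Suppose $d:X\to B$ satisfies $gd=0$. I want to show $d$ is $\frakE$-injective, and by Lemma \ref{basic-CI-inj}(b) this is equivalent to $d[1]\Ph(\frakE)=0$. So take any $\psi\in\Ph(\frakE)$ with target $X[1]$, say $\psi:W\to X[1]$. I must show $d[1]\psi=0$.

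The key step is to exploit the defining property of an $\Ph(\frakE)$-precover. Since $gd=0$, the map $d:X\to B$ factors through the fibre of $g$, i.e.\ through $f$: there is $e:X\to A$ with $d=fe$. Then $d[1]\psi=f[1]e[1]\psi$. Now $e[1]\psi:W\to A[1]$; I claim this composite lies in $\Ph(\frakE)$. Indeed $\Ph(\frakE)$ is a phantom $\CA$-ideal, so it absorbs pre-composition by maps of $\CA$ (condition (iii): $\CA^\to[1]\Ph(\frakE)\CA^\to\subseteq\Ph(\frakE)$); here $e[1]$ comes from $e\in\CA^\to$ and $\psi\in\Ph(\frakE)$, so $e[1]\psi\in\Ph(\frakE)$. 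Since $\phi$ is an $\Ph(\frakE)$-precover for $A[1]$, the map $e[1]\psi$ factors through $\phi$: there is $t:W\to C$ with $e[1]\psi=\phi t$. Therefore $d[1]\psi=f[1]\phi t$. But in the triangle $A\overset{f}\to B\overset{g}\to C\overset{\phi}\to A[1]$ the composite $f[1]\phi$ is zero (consecutive maps in the rotated triangle $B\overset{g}\to C\overset{\phi}\to A[1]\overset{f[1]}\to B[1]$ compose to zero). Hence $d[1]\psi=0$, as required.

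Applying this with $d=f$ itself (note $gf=0$) shows $f\in\Einj$. To see that $f$ is moreover an $\Einj$-preenvelope of $A$, take any $\frakE$-injective $h:A\to Z$; I must factor $h$ through $f$. Complete $f$ to the triangle above; then $h$ extends along $f$ precisely when the obstruction $h[1]\phi\in\Hom(C,Z[1])$ vanishes after composing with $\phi$, i.e.\ when $h[1]\phi=0$ — and this holds because $\phi\in\Ph(\frakE)$ and $h$ is $\frakE$-injective, so $h[1]\Ph(\frakE)=0$ by Lemma \ref{basic-CI-inj}(b). The long exact sequence obtained by applying $\Hom(-,Z)$ to the triangle then yields the desired $\bar h:B\to Z$ with $\bar h f=h$. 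Conversely, if $f$ is $\frakE$-injective one runs the argument of Salce-type completeness: given any $\psi:W\to A[1]$ in $\Ph(\frakE)$, form the homotopy pullback of $\mathfrak d$ along $\psi[-1]$ shifted appropriately — equivalently, one shows directly that $\psi$ factors through $\phi$. The cleanest route is: $\psi\in\Ph(\frakE)$ and $f$ being $\frakE$-injective means $f[1]\psi=0$, hence $\psi$ lifts along the map $\Hom(C,A[1])\overset{\phi\circ-}{\longleftarrow}\cdots$; more precisely from the triangle $C\overset{\phi}\to A[1]\overset{f[1]}\to B[1]\overset{g[1]}\to C[1]$, the vanishing $f[1]\psi=0$ gives $t:W\to C$ with $\psi=\phi t$, which is exactly the statement that $\phi$ is an $\Ph(\frakE)$-precover. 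The main obstacle is keeping the directions straight in all these rotated triangles and making sure each composite one wants to annihilate genuinely lies in $\Ph(\frakE)$ so that the precover/phantom-ideal hypotheses can be invoked; once the bookkeeping is fixed, each step is a one-line diagram chase using Lemma \ref{basic-CI-inj} and the phantom $\CA$-ideal axioms.
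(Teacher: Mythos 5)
Your proof is correct and follows essentially the same route as the paper's: you factor $d$ through $f$ via the weak kernel property, push the phantom forward along $e[1]$ to get an element of $\Ph(\frakE)$ landing in $A[1]$, use the precover property to factor it through $\phi$, and kill the composite with $f[1]\phi=0$; the preenvelope and converse steps are also the same diagram chases the paper uses (weak cokernel property / long exact $\Hom$ sequence). The only minor presentational wrinkle is the sentence beginning "vanishes after composing with $\phi$" — you mean simply that the obstruction class $h[1]\phi$ itself vanishes — but the conclusion you draw is the right one.
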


\begin{proof} It is enough to prove (1).

 Let $\psi:Y\to X[1]$ be a homomorphism in $\Ph(\frakE)$. 
Our initial data consist in the solid part of the following (commutative) diagram:

\[ \xymatrix{ & X\ar@{-->}[dl]_{h}\ar[d]^{d} &  & Y \ar@{-->}[dl]_{k}\ar[r]^{\psi} & X[1]\ar@{-->}[dl]_{h[1]}\ar[d]^{d[1]}   \\
 C \ar[r]_{f}& B\ar[r]_{g}  & A \ar[r]_{\phi} & C[1]\ar[r]_{f[1]} & B[1] 
 .} \]


Since $gd=0$ and $f$ is a weak kernel 
 for $g$ we get a factorization $d=fh$ for some homomorphism $h:X\to C$. Now $h[1]\psi:Y\to C[1]$ is in $\Ph(\frakE)$ 
 because $\Ph(\frakE)$ is an phantom $\CA$-ideal. Since $\phi$ 
 is an $\Ph(\frakE)$-precover for $C[1]$, 
 we get further a factorization $h[1]\psi=\phi k$, for some $k:Y\to A$. 
 Now $d[1]\psi=f[1]h[1]\psi=f[1]\phi k=0$. Therefore $d[1]\Ph(\frakE)=0$ so $d\in\Einj$. 
 
 Since $gf=0$ we have $f\in\Einj$.
 Moreover, if 
 $f':C\to B'$ is a homomorphism in $\Einj$, then $f'\phi[-1]=0$. Since $f$ is a weak cokernel for $\phi[-1]$, $f'$ has to factor through $f$.  

For the last statement, let us observe that if $\phi$ is a precover for $C[1]$ then $f$ is $\frakE$-injective by 
what we just proved. 
Conversely, if $f\in\Einj$ then for every map $\psi:X\to C[1]$ from $\Ph(\frakE)$ we 
have $f[1]\psi=0$. But $\phi$ is a weak kernel for $f[1]$, hence $\psi$ factors through $\phi$.
%
 \end{proof}

\begin{definition}\label{def-suficiente}
Let $\frakE$ be an almost exact structure in $\CA$. We say that there are \textsl{enough $\frakE$-injective homomorphisms}  
if for every object $A$ there 
 exists an $\frakE$-inflation $f:A\to B$ which is $\frakE$-injective.

Dually, there are \textsl{enough $\frakE$-projective homomorphisms} if for every object $C$ there 
 exists an $\frakE$-deflation $g:B\to C$ which is $\frakE$-projective;
\end{definition}

\begin{theorem}\label{Th-procov-vs-Iinj}
Let $\frakE$ be an almost exact structure in $\CA$.
	\begin{enumerate}[{\rm (1)}]
\item The following are equivalent: \begin{enumerate}[{\rm (a)}]
\item  there are enough $\frakE$-injective homomorphisms;
 \item $\Ph(\frakE)$ is a precovering phantom $\CA$-ideal.  
 
 \end{enumerate}
\item The following are equivalent:  \begin{enumerate}[{\rm (a)}] \item there are enough $\frakE$-projective homomorphisms;
 \item $\Ph(\frakE)$ is a preenveloping phantom $\CA$-ideal.
 \end{enumerate}
\end{enumerate}
\end{theorem}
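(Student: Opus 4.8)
The plan is to deduce both equivalences directly from Lemma \ref{Icov-vs-Iinjenv}, which already isolates the precise link between $\Ph(\frakE)$-precovers of $A[1]$ and $\frakE$-injectivity of the associated $\frakE$-inflation. As usual it suffices to prove part (1); part (2) then follows by the dual argument, using Lemma \ref{Icov-vs-Iinjenv}(2) in place of Lemma \ref{Icov-vs-Iinjenv}(1). Throughout we use that, by Proposition \ref{pseudo-ph-vs-wpc}, $\Ph(\frakE)$ is a phantom $\CA$-ideal, so that "precovering" makes sense for it.

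For the implication (a)$\Rightarrow$(b) I would argue as follows. An arbitrary object of $\CA[1]$ can be written as $A[1]$ with $A\in\CA$. By the hypothesis of enough $\frakE$-injective homomorphisms there is an $\frakE$-inflation $f\colon A\to B$ which is $\frakE$-injective; completing $f$ to a triangle $A\overset{f}\to B\overset{g}\to C\overset{\phi}\to A[1]$, this triangle lies in $\frakE$ (because $f$ is an $\frakE$-inflation), so $\phi\in\Ph(\frakE)$. The "consequently" clause of Lemma \ref{Icov-vs-Iinjenv}(1) says that, since $f$ is $\frakE$-injective, $\phi$ is an $\Ph(\frakE)$-precover of $A[1]$. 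As $A[1]$ was an arbitrary object of $\CA[1]$, the phantom $\CA$-ideal $\Ph(\frakE)$ is precovering.

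For (b)$\Rightarrow$(a) I would fix $A\in\CA$ and apply precovering of $\Ph(\frakE)$ to the object $A[1]\in\CA[1]$, obtaining an $\Ph(\frakE)$-precover $\phi\colon C\to A[1]$. Since $\phi\in\Ph(\frakE)$, it is the third morphism of some $\frakE$-triangle; because completing a morphism to a triangle is unique up to isomorphism of triangles and $\frakE$ is closed under isomorphisms of triangles, this triangle may be taken in the form $A\overset{f}\to B\overset{g}\to C\overset{\phi}\to A[1]$. Now the other direction of the same "if and only if" in Lemma \ref{Icov-vs-Iinjenv}(1) gives that $f$ is $\frakE$-injective; and $f$ is an $\frakE$-inflation since the triangle is in $\frakE$. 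Thus every object of $\CA$ admits an $\frakE$-injective $\frakE$-inflation out of it, which is precisely the statement that there are enough $\frakE$-injective homomorphisms.

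The proof has essentially no hard step: the real content sits in Lemma \ref{Icov-vs-Iinjenv}. The only points needing a little care — and the closest thing to an obstacle — are bookkeeping: using that $(-)[1]$ is an equivalence so that a phantom with target $A[1]$ genuinely arises from a triangle whose first object is $A$, using closure of $\frakE$ under isomorphisms of triangles to normalise the shape of that triangle, and checking that "every object of $\CA[1]$" in the definition of a precovering phantom $\CA$-ideal is exactly "$A[1]$ as $A$ ranges over $\CA$."
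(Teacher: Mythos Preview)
Your proof is correct and follows essentially the same route as the paper: both directions are deduced directly from the ``consequently'' clause of Lemma \ref{Icov-vs-Iinjenv}(1), by completing either the given $\frakE$-injective $\frakE$-inflation or the given $\Ph(\frakE)$-precover to an $\frakE$-triangle. Your write-up is slightly more explicit about the bookkeeping (invoking Proposition \ref{pseudo-ph-vs-wpc} and closure of $\frakE$ under isomorphisms of triangles), but the argument is the same.
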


\begin{proof} 

(a)$\Rightarrow$(b) Let $A$ be an object in $\CA$. Using the hypothesis we observe that there exists an $\frakE$-triangle
$A\overset{f}\to B\to C\overset{\phi}\to A[1]$ such that $f$ is $\frakE$-injective. By (a) and Lemma \ref{Icov-vs-Iinjenv}  
we conclude that $\phi$ is a 
$\Ph(\frakE)$-precover for $A[1]$.

(b)$\Rightarrow$(a)
Suppose that $\Ph(\frakE)$ is a precovering phantom $\CA$-ideal. Let $A$ be an object in $\CA$. 
If $\phi:C\to A[1]$ is an $\Ph(\frakE)$-precover, we consider the $\frakE$-triangle 
$A\overset{f}\to B\to C\overset{\phi}\to A[1]$. Using Lemma \ref{Icov-vs-Iinjenv}, we
conclude that $f$ is an $\frakE$-injective $\frakE$-inflation.
\end{proof}

In the following we will present a method to construct almost exact structures with enough injective/projective homomorphisms. 
This is an extension of the method presented in \cite[Section 1]{Aus-So}. We start with an example of an almost exact structure which extends
Example \ref{fantome-clasice-gen}.

\begin{example}
Let $\CI$ be an ideal in $\CA$. Then $$\mathcal{E}^{\CI}=\{\varphi\in\CT(\CA,\CA[1])\mid \CI[1]\varphi=0\}$$ is a phantom $\CA$-ideal,
hence the class $\frakE^\CI=\frakD(\mathcal{E}^{\CI})$ is an almost exact structure in $\CA$ according to Proposition \ref{pseudo-ph-vs-wpc}.
It is easy to see that $\frakE^\CI$ is the class of all triangles 
$C\to B\to A\to C[1]$ from $\mathfrak{D}_\CA$ with the property that all homomorphisms from $\CI$ are injective with respect to 
these triangles. 

Dually, if we consider the phantom $\CA$-ideal $$\mathcal{E}_{\CI}=\{\varphi\in\CT(\CA,\CA[1])\mid \varphi\CI=0\},$$
we obtain the almost exact structure $\frakE_\CI$ of all triangles 
$C\to B\to A\to C[1]$ from $\mathfrak{D}_\CA$ with the property that all homomorphisms from $\CI$ are projective with respect to 
these triangles. 
\end{example}

\begin{proposition}\label{prop1-suficiente}
Let $\frakE$ be an almost exact structure in $\CA$. 
\begin{enumerate}[{\rm (1)}]
\item If there are enough $\frakE$-injective homomorphisms then $\frakE=\frakE^{\frakE\textrm{-}\mathrm{inj}}$.
\item If there are enough $\frakE$-projective homomorphisms then $\frakE=\frakE_{\frakE\textrm{-}\mathrm{proj}}$.
\end{enumerate} 
\end{proposition}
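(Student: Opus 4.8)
By duality it suffices to prove part (1); part (2) follows by passing to the opposite category. So assume there are enough $\frakE$-injective homomorphisms; I want to show $\frakE=\frakE^{\frakE\textrm{-}\mathrm{inj}}$, i.e.\ a triangle $\mathfrak{d}:B\to C\to A\to B[1]$ in $\mathfrak{D}_\CA$ lies in $\frakE$ if and only if every homomorphism in $\frakE\textrm{-}\mathrm{inj}$ is injective with respect to $\mathfrak{d}$. Unravelling the description of $\frakE^\CI$ given just before the statement (with $\CI=\Einj$), this is equivalent to showing: $\Ph(\mathfrak{d})\in\Ph(\frakE)$ iff $(\Einj)[1]\,\Ph(\mathfrak{d})=0$, i.e.\ that
\[
\Ph(\frakE)=\{\varphi\in\CT(\CA,\CA[1])\mid (\Einj)[1]\,\varphi=0\}.
\]
The inclusion ``$\subseteq$'' is immediate from Lemma \ref{basic-CI-inj}(b): if $g\in\Einj$ then $g[1]\Ph(\frakE)=0$, so every $\frakE$-phantom is killed on the left by $(\Einj)[1]$. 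The content is the reverse inclusion.

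For ``$\supseteq$'', let $\varphi:A\to C[1]$ be a map (with $A,C\in\CA$) such that $(\Einj)[1]\varphi=0$; I must produce it as an $\frakE$-phantom. Here is where the hypothesis enters: using ``enough $\frakE$-injective homomorphisms'' applied to the object $C$, choose an $\frakE$-triangle $C\overset{f}\to Y\to Z\overset{\psi}\to C[1]$ with $f:C\to Y$ an $\frakE$-injective $\frakE$-inflation. Completing $f$ to a triangle this way, $\psi=\Ph(\text{that triangle})$ is a weak cokernel of $f[1]$ (rotating the triangle $C\to Y\to Z\to C[1]$). Now $f\in\Einj$, so $f[1]\varphi=0$ (this is exactly the hypothesis on $\varphi$, since $f[1]\in(\Einj)[1]$); hence $\varphi$ factors through the weak cokernel $\psi$ of $f[1]$, say $\varphi=\psi k$ for some $k:A\to Z$. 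But $\psi=\Ph(\mathfrak{d})$ for the $\frakE$-triangle $\mathfrak{d}:C\to Y\to Z\to C[1]$, so $\psi\in\Ph(\frakE)$, and $\Ph(\frakE)$ is a phantom $\CA$-ideal (Proposition \ref{pseudo-ph-vs-wpc}), closed under precomposition with maps from $\CA^\to$; therefore $\varphi=\psi k\in\Ph(\frakE)$. This proves the reverse inclusion and hence the displayed identity of phantom $\CA$-ideals.

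Finally, translate back: two weak proper classes $\frakE$ and $\frakE^{\Einj}=\frakD(\mathcal{E}^{\Einj})$ with $\mathcal{E}^{\Einj}=\{\varphi\mid (\Einj)[1]\varphi=0\}$ coincide as soon as their associated phantom $\CA$-ideals $\Ph(\frakE)$ and $\mathcal{E}^{\Einj}$ coincide, by the bijective correspondence in Proposition \ref{pseudo-ph-vs-wpc}; we have just shown $\Ph(\frakE)=\mathcal{E}^{\Einj}$, so $\frakE=\frakE^{\Einj}$, as required. The main (indeed only) delicate point is the factorization step in the previous paragraph: one has to be careful that with only weak cokernels available in a triangulated category, the factorization of $\varphi$ through $\psi$ exists — but this is exactly the defining property of a weak cokernel of $f[1]$, combined with $f[1]\varphi=0$, so no further work is needed.
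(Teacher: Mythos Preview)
Your proof is correct and is essentially the same argument as the paper's, just phrased at the level of phantom $\CA$-ideals rather than triangles: the paper takes $\mathfrak{d}\in\frakE^\CI$, uses that the $\frakE$-injective $\frakE$-inflation $\alpha:B\to E$ extends over $B\to C$, and then exhibits $\mathfrak{d}$ as a base change of the resulting $\frakE$-triangle $B\to E\to D\to B[1]$ --- which is exactly your factorization $\varphi=\psi k$ read off from the phantom maps. One small slip: $\psi$ is a weak \emph{kernel} of $f[1]$ (not a weak cokernel), though you use it correctly.
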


\begin{proof}
(1) Let $\CI$ be the ideal $\frakE\textrm{-inj}$. It is enough to prove the inclusion $\frakE^\CI\subseteq \frakE$.

Let $\mathfrak{d}: C\to B\to A\to C[1]$ be a triangle in $\frakE^\CI$. If $\alpha:C\to E$ is an $\frakE$-injective $\frakE$-inflation then
$\alpha\in \CI$, so it is injective with respect to $\mathfrak{d}$. Therefore, we can construct a commutative diagram  
\[ \xymatrix{\mathfrak{d}: & C\ar[r]\ar@{=}[d] & B\ar[d]\ar[r]& A\ar[d]\ar[r] & C[1]\ar@{=}[d]\\
	     & C\ar[r]      & E\ar[r]     & D\ar[r]           & C[1],}\]
where the horizontal lines are triangles in $\mathfrak{D}_\CA$. Since the below triangle is in $\frakE$, it follows that the top triangle is also in $\frakE$, and the proof is complete.

(2) This is the dual of (1).
\end{proof}

It is easy to see that if $\frakF$ and $\frakE$ are almost exact structures such that $\frakF\subseteq \frakE$ then $\frakE\inj\subseteq 
\frakF\inj$ and $\frakE\proj\subseteq \frakF\proj$. We can use the previous proposition to prove a converse for this implication. 

\begin{corollary}\label{cor-prop1-suficiente}
Let $\frakE$ and $\frakF$ be almost exact structures in $\CA$. 
\begin{enumerate}[{\rm (1)}]
\item Suppose that there are enough $\frakE$-injective homomorphisms. Then $\frakF\subseteq \frakE$ 
if and only if $\frakE\inj\subseteq \frakF\inj$.
\item Suppose that there are enough $\frakE$-projective homomorphisms. Then $\frakF\subseteq \frakE$ 
if and only if $\frakE\proj\subseteq \frakF\proj$.
\end{enumerate} 
\end{corollary}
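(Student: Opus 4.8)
The plan is to obtain the non-trivial implication as a quick consequence of Proposition \ref{prop1-suficiente} together with the reformulation of the weak proper classes $\frakE^{\CI}$ (resp. $\frakE_{\CI}$) given in the example just before it.

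First I would dispose of the implication $\frakF\subseteq\frakE\Rightarrow\frakE\inj\subseteq\frakF\inj$, which needs no hypothesis and is exactly the observation recorded in the paragraph preceding the corollary: if $g$ is injective with respect to every triangle in $\frakE$ and $\frakF\subseteq\frakE$, then a fortiori $g$ is injective with respect to every triangle in $\frakF$, i.e. $g\in\frakF\inj$; and dually for $\proj$.

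For the converse, assume $\frakE\inj\subseteq\frakF\inj$. Since there are enough $\frakE$-injective homomorphisms, Proposition \ref{prop1-suficiente}(1) gives $\frakE=\frakE^{\frakE\inj}$, and by the cited example $\frakE^{\frakE\inj}$ is precisely the class of triangles in $\frakD_\CA$ with respect to which every homomorphism of the ideal $\frakE\inj$ is injective. So it suffices to show that each $\mathfrak{d}\in\frakF$ lies in $\frakE^{\frakE\inj}$. Now $\mathfrak{d}\in\frakF\subseteq\frakD_\CA$ by the definition of a weak proper class, and for any $g\in\frakE\inj$ the hypothesis yields $g\in\frakF\inj$, so $g$ is injective with respect to $\mathfrak{d}$; hence $\mathfrak{d}\in\frakE^{\frakE\inj}=\frakE$. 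This proves $\frakF\subseteq\frakE$, and part (2) follows by the same argument applied to Proposition \ref{prop1-suficiente}(2) and the classes $\frakE_{\CI}$, or simply by dualizing.

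There is essentially no obstacle: the content is entirely carried by Proposition \ref{prop1-suficiente}. The only points to be careful about are to invoke the correct description of $\frakE^{\CI}$ (triangles relative to which the maps of $\CI$ are \emph{injective}, not those that are $\frakE$-phantoms), and to note explicitly that $\frakF\subseteq\frakD_\CA$ is built into the notion of a weak proper class, so that testing membership of $\mathfrak{d}$ in $\frakE^{\frakE\inj}$ genuinely reduces to the injectivity condition.
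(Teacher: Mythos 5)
Your proof is correct and follows essentially the same route as the paper's: both reduce the non-trivial direction to Proposition \ref{prop1-suficiente} via the identity $\frakE=\frakE^{\frakE\inj}$, and then verify $\frakF\subseteq\frakE^{\frakE\inj}$ (the paper factors this check through the intermediate class $\frakE^{\frakF\inj}$, whereas you unwind it directly, but the content is the same).
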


\begin{proof}
(1) Suppose that $\CI=\frakE\inj\subseteq \frakF\inj=\CJ$. Since we have enough $\frakE$-injective homomorphisms, we can apply the 
previous proposition to obtain $\frakF\subseteq \frakE^\CJ\subseteq \frakE^\CI=\frakE$.  
\end{proof}

\begin{proposition}\label{subclasses-enough-inj}
Let $\frakF\subseteq \frakE$ be almost exact structures in $\CA$.  
\begin{enumerate}[{\rm (1)}]
\item If there are enough $\frakE$-injective homomorphisms the following are equivalent:  \begin{enumerate}[{\rm (a)}]
\item there are enough $\frakF$-injective homomorphisms;
\item  there exists a preenveloping ideal $\CI$ in $\CA$ such that $\frakE\textrm{-}\mathrm{inj}\subseteq \CI$, and $\frakF=\frakE^\CI$.
\end{enumerate}

\noindent In these conditions $\frakF\inj=\CI$.
\item If there are enough $\frakE$-projective homomorphisms the following are equivalent: \begin{enumerate}[{\rm (a)}] 
\item there are enough $\frakF$-projective homomorphisms;
\item  there exists a precovering ideal $\CI$ in $\CA$ such that $\frakE\textrm{-}\mathrm{proj}\subseteq \CI$ and $\frakF=\frakE_\CI$.
 \end{enumerate}

\noindent In these conditions $\frakF\proj=\CI$.
\end{enumerate}
\end{proposition}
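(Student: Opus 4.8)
The plan is to reduce to part (1) by the standing duality convention and to prove its two implications in turn; the direction $(b)\Rightarrow(a)$ carries essentially all of the work. I will repeatedly use that $\frakE^{\CI}=\frakD(\mathcal{E}^{\CI})$, and hence that $\Ph(\frakE^{\CI})=\mathcal{E}^{\CI}$, depend on $\CI$ alone. For $(a)\Rightarrow(b)$ I would take $\CI:=\frakF\inj$: it is an ideal by Lemma~\ref{basic-CI-inj}, it contains $\frakE\inj$ because $\frakF\subseteq\frakE$ forces $\frakE\inj\subseteq\frakF\inj$, and $\frakF=\frakE^{\CI}$ is Proposition~\ref{prop1-suficiente}(1) applied to $\frakF$. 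Moreover, for $A\in\CA$ enough $\frakF$-injective homomorphisms provides an $\frakF$-injective $\frakF$-inflation $f\colon A\to B$, and completing it to an $\frakF$-triangle and applying Lemma~\ref{Icov-vs-Iinjenv}(1) to $\frakF$ exhibits $f$ as an $\frakF\inj$-preenvelope of $A$, so $\CI$ is preenveloping; here $\frakF\inj=\CI$ is tautological.

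For $(b)\Rightarrow(a)$ I first record $\CI\subseteq\frakF\inj$: since $\Ph(\frakF)=\mathcal{E}^{\CI}$, any $g\in\CI$ satisfies $g[1]\psi\in\CI[1]\psi=0$ for every $\psi\in\mathcal{E}^{\CI}$, so $g\in\frakF\inj$ by Lemma~\ref{basic-CI-inj}(b). Now fix $A\in\CA$, choose an $\CI$-preenvelope $e_{0}\colon A\to E_{0}$ (so $E_{0}\in\CA$) and, using that there are enough $\frakE$-injective homomorphisms, an $\frakE$-injective $\frakE$-inflation $\iota\colon A\to J$; then $\iota\in\frakE\inj\subseteq\CI$. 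Put $e=\binom{e_{0}}{\iota}\colon A\to E_{0}\oplus J$. Then $e\in\CI$ (sums and the ideal property), and $e$ is again an $\CI$-preenvelope of $A$ because $e_{0}=\pi_{1}e$, so every homomorphism from $\CI$ issuing from $A$ factors through $e$.

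The crucial point, and the one I expect to be the main obstacle, is that $e$ is an $\frakE$-inflation: a bare $\CI$-preenvelope need not be a $\frakD_\CA$-inflation since $\CA$ is not closed under shifts, and enlarging it by $\iota$ is precisely what cures this. I would apply the octahedral axiom to $A\xrightarrow{e}E_{0}\oplus J\xrightarrow{\pi_{2}}J$, whose composite is $\iota$: completing $e$ to a triangle $A\xrightarrow{e}E_{0}\oplus J\xrightarrow{q}D\xrightarrow{\delta}A[1]$, $\iota$ to its $\frakE$-triangle $A\xrightarrow{\iota}J\to C_{\iota}\to A[1]$ (so $C_{\iota}\in\CA$), and $\pi_{2}$ to a triangle $E_{0}\oplus J\xrightarrow{\pi_{2}}J\to E_{0}[1]\to(E_{0}\oplus J)[1]$, the octahedron yields a triangle $D\to C_{\iota}\to E_{0}[1]\to D[1]$, that is, after a rotation, $E_{0}\to D\to C_{\iota}\to E_{0}[1]$. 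Since $E_{0},C_{\iota}\in\CA$ and $\CA$ is extension closed, $D\in\CA$, so $e$ is a $\frakD_\CA$-inflation; as $\pi_{2}e=\iota$ is an $\frakE$-inflation, Lemma~\ref{infl-defl-factors}(1) gives that $e$ is an $\frakE$-inflation.

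To finish, in the $\frakE$-triangle $A\xrightarrow{e}E_{0}\oplus J\xrightarrow{q}D\xrightarrow{\delta}A[1]$ I would check $\delta\in\mathcal{E}^{\CI}$: each $g\in\CI$ with source $A$ factors as $g=ke$, and $e[1]\delta=0$ (consecutive maps of a triangle), so $g[1]\delta=0$ and hence $\CI[1]\delta=0$; thus this triangle lies in $\frakD(\mathcal{E}^{\CI})=\frakF$, so $e$ is an $\frakF$-inflation, and $e\in\CI\subseteq\frakF\inj$ makes it $\frakF$-injective. As $A$ was arbitrary, there are enough $\frakF$-injective homomorphisms. Finally, applying the same construction to the source of an arbitrary $g\in\frakF\inj$ produces an $\frakF$-inflation lying in $\CI$ through which $g$ must factor, so $g\in\CI$; hence $\frakF\inj=\CI$. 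Part (2) is obtained by dualizing throughout: replace inflations, $\frakE$-injectives and preenvelopes by deflations, $\frakE$-projectives and precovers, homotopy pushouts by homotopy pullbacks, and $\frakE^{(-)}$ by $\frakE_{(-)}$.
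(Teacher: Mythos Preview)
Your proof is correct and follows the same overall strategy as the paper. For $(a)\Rightarrow(b)$ both arguments take $\CI=\frakF\inj$ and invoke Proposition~\ref{prop1-suficiente}; you add the explicit verification (via Lemma~\ref{Icov-vs-Iinjenv}) that this ideal is preenveloping, which the paper leaves to Definition~\ref{def-suficiente}.

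For $(b)\Rightarrow(a)$ the paper works directly with a bare $\CI$-preenvelope $i\colon A\to I$: since $e\in\frakE\inj\subseteq\CI$ factors as $e=\alpha i$, one obtains a morphism of triangles with $1_A$ on the left and appeals to closure of $\frakE$ under base change to conclude that $i$ itself is an $\frakE$-inflation; the rest (checking the triangle lies in $\frakE^{\CI}$ and that $i$ is $\frakF$-injective) is then the same as yours. Your route differs in that you first enlarge the preenvelope to $e=\binom{e_0}{\iota}$ and use the octahedral axiom to exhibit the cone $D$ as an extension $E_0\to D\to C_\iota$ of objects of~$\CA$, so that Lemma~\ref{infl-defl-factors}(1) applies. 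This extra step makes explicit something the paper's base-change argument needs but does not verify, namely that the cone of the chosen preenvelope lies in $\CA$ (otherwise the map $Z\to X$ is not a morphism in $\CA$ and the base-change axiom for $\frakE$ is not available). So the two proofs share the same idea; yours trades a one-line appeal to base change for a short octahedral computation that is more self-contained when $\CA\subsetneq\CT$. The concluding argument for $\frakF\inj=\CI$ is the same in both.
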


\begin{proof} We will prove (1). 

(a)$\Rightarrow$(b) Take $\CI=\frakF\textrm{-}\mathrm{inj}$. The conclusion follows from Definition \ref{def-suficiente} and Proposition
\ref{prop1-suficiente}.

(b)$\Rightarrow$(a) We have to prove that $\frakE^\CI$ has enough injective homomorphisms. Let $A$ be in $\CA$. 
We start with an $\frakE$-triangle $A\overset{e}\to E \to X\to A[1]$ such that $e$ is $\frakE$-injective. Let $i:A\to I$ be an 
$\CI$-preenvelope for $A$. Since $e\in \CI$, it factorize through $i$. Then we have a commutative diagram 
\[ \xymatrix{ A\ar[r]^{i}\ar@{=}[d] & I\ar[d]\ar[r]& Z\ar[d]\ar[r] & A[1]\ar@{=}[d]\\
	      A\ar[r]^{e}      & E\ar[r]     & X\ar[r]           & A[1],}\]
and using the closure of $\frakE$ with respect to  base changes we conclude that $i$ is an $\frakE$-inflation.
Since $i$ is an $\CI$-preenvelope, it is easy to see that the triangle $A\overset{i}\to I\to Z\to A[1]$ is in $\frakE^\CI$, hence
$i$ is an $\frakE^\CI$-injective $\frakE^\CI$-inflation.

For the last statement, let us observe that for every $A\in \CA$ every $\CI$-preenvelope $i:A\to X$ is a $\frakF$-inflation. 
Therefore, every $\frakF$-injective homomorphism $A\to X$ factorizes through $i$,
hence $\frakF\inj\subseteq \CI$. The converse inclusion follows from the equality 
$\frakF=\frakE^\CI$.           
\end{proof}

{
For further reference we mention here the following particular case:

\begin{example}\label{ex-constr-suf-proj}
If $\CA=\CT$ and $\frakE=\frakD$ is the class of all triangles in $\CT$ then $0$ is the ideal of all $\frakD$-injective ($\frakD$-projective)
homomorphisms. Since in this case all homomorphisms are $\frakD$-inflations and $\frakD$-deflations, it follows that we have enough 
$\frakD$-injective homomorphisms and $\frakD$-projective homomorphisms. 

If $\CI$ is a preenveloping (precovering) ideal, we consider the almost exact structure $\frakD^\CI$ (resp. $\frakD_\CI$) of all triangles $\mathfrak{d}$ such that all $i\in \CI$ are injective (projective) relative to $\mathfrak{d}$. By what we just proved we obtain that $\frakD^\CI$ 
(resp. $\frakD_\CI$) has enough injective (projective) homomorphisms and $\frakD^\CI\inj=\CI$ (resp. $\frakD_\CI\proj=\CI$).  
\end{example}
}


%




\subsection{Orthogonality}

We say that a homomorphism $f:X\to A$ from $\CA$ is \textsl{left orthogonal} 
(with respect to $\frakE$) 
to a homomorphism $g:B\to Y$ from $\CA$, 
and we denote this by $f\perp g$, 
if $$\CT(f,g[1])(\Ph(\frakE))=0,$$ i.e. for all homomorphisms $\phi:A\to B[1]$ in $\Ph(\frakE)$ 
we have $g[1]\phi f=0$.  
{
This means that for every triangle $B\to C\to A\overset{\phi}\to B[1]$ from $\frakE$ the triangle obtained by a base-cobase change
 \[ \xymatrix{ B\ar[r]\ar@{=}[d] & C\ar[r]\ar@{<-}[d] & A \ar[r]^{\phi}\ar@{<-}[d]^{f} & B[1]\ar@{=}[d]\\
B\ar[r]\ar[d]^{g} & C'\ar[r]\ar[d] & X \ar[r]\ar@{=}[d] & B[1]\ar[d]^{g[1]} \\
Y\ar[r] & C''\ar[r] & X \ar[r]^{0} & Y[1]
}\] splits.
}


\begin{example}
(a) If $\CA$ is an abelian category, $\CT=\mathbf{D}(\CA)$, and $\frakE$ is the class of all short exact 
sequences in $\CA$ then $f\perp g$ if and only if $\Ext(f,g)=0$.

(b) If $\CA$ is an exact category, $\CT=\mathbf{D}(\CA')$, where $\CA'$ is an abelian category containing $\CA$ as an 
extension closed category, and $\frakE$ is the class of triangles coming from conflations in $\CA$, then 
$f\perp g$ if and only if $\Ext(f,g)=0$. 

(c) If $\CA=\CT$, and $\frakE$ is an almost exact structure in $\CT$ then $f\perp g$ means exactly $\Ph(\frakE)(f,g)=0$ (we use Remark \ref{rem-phantom-ideal}(b)). 
In particular, if
$\frakE=\frakD$ is the class of all triangles in $\CT$ then $f\perp g$ iff $\CT(f,g[1])=0$.
\end{example}

\begin{remark}
Let $\CA$ be an abelian  category, 
$\CT=\mathbf{D}(\CA)$ 
and let $\frakE=\frakD$ be the class of all triangles in $\CT$. 
Then a homomorphism $f:A_0\to A_1$ in $\CA$ may be interpreted as a 
complex, so it gives rise to an object $\mathbf{f}$ of $\mathbf{D}(\CA)$. Clearly if $g:B_0\to B_1$ is another homomorphism
in $\CA$, and $\mathbf{g}$ is the object in $\mathbf{D}(\CA)$ by the complex induced by $f$ then we may consider 
the condition ${\mathbf{D}(\CA)}(\mathbf{f},\mathbf{g}[1])=0$, that is there is no other map in $\mathbf{D}(\CA)$ between the 
two complexes above than $0$ (for example such a condition appears in the definition of a (pre)silting object 
in \cite{AMV}). We want to warn that  this kind of orthogonality is different from ours. 

Indeed, for $\CA=\Ab$ the 
category of all abelian groups, let us consider the homomorphism $f:\Z\to \Z$ which is the multiplication by $2$. Since $f$ has projective domain 
it follows easily that $\phi f=0$ for all $\phi:\Z\to X[1]$, with $X\in\Ab$. In fact since $\phi$ has also projective domain, 
even $\phi$ vanishes. Therefore $\mathbf{D}(\Ab)(f,f[1])=0$, hence $f\perp f$. 
On the other side, as object in $\mathbf{D}(\Ab)$, $\mathbf{f}$ is a bounded complex of projectives, so it is homotopically projective. 
It follows that the homomorphisms in $\mathbf{D}(\Ab)$ starting in $\mathbf{f}$ are exactly homotopy classes of 
homomorphisms of complexes. But for every two homomorphisms of abelian groups $s_0,s_1$ as in the diagram:
\[\xymatrix{\mathbf{f}: & \cdots\ar[r] & 0\ar[r] & \Z\ar[r]^{f}\ar[dl]_{s_0}&\Z\ar[r]\ar[dl]^{s_1}  & \cdots \\
\mathbf{f}[1]: & \cdots\ar[r] &\Z\ar[r]^{f} &\Z\ar[r] &0\ar[r]  & }\]
we have $\im(fs_0+s_1f)\subseteq2\Z$ showing that the homomorphism of complexes which is the identity map in
degree 0 and 0 otherwise is not homotopic to 0, and it follows that $\mathbf{D}(\Ab)(\mathbf{f},\mathbf{f}[1])\neq 0$. 
\end{remark}

\begin{lemma}\label{lema-ort-basic}
Let $f:X\to A$ and $g:B\to Y$ be two homomorphisms in $\CA$. The following are equivalent:
\begin{enumerate}[{\rm (1)}]
 \item $f\perp g$;
 \item every homomorphism $\phi:A\to B[1]$ from $\Ph(\frakE) $ induces a triangle homomorphism 
 \[ \xymatrix{ Z \ar[r] \ar@{-->}[d] & X\ar[r]^f \ar@{-->}[d] & A \ar[r] \ar[d]^{\phi} & Z[1]\ar@{-->}[d]  \\
 Y \ar[r]& T\ar[r]  & B[1] \ar[r]_{g[1]}  & Y[1] 
 .}
 \]
 \end{enumerate}
\end{lemma}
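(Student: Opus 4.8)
The plan is to prove the two implications directly from the axioms of triangulated categories, using only that consecutive morphisms of a triangle compose to zero, that each morphism of a triangle is a weak kernel of the next one, and the axiom allowing one to complete a commutative square to a morphism of triangles (see \cite{Nee}). Throughout, complete $f$ to a triangle $Z\overset{i}\to X\overset{f}\to A\overset{w}\to Z[1]$ and $g$ to the rotated triangle $Y\overset{a}\to T\overset{b}\to B[1]\overset{g[1]}\to Y[1]$ (so that $T$ is the cone of $g$); note that these triangles live in $\CT$, the objects $Z$ and $T$ need not lie in $\CA$, which is consistent with the diagram in the statement being a diagram in $\CT$.

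For (2)$\Rightarrow$(1), I would fix $\phi\in\Ph(\frakE)$ and take the triangle morphism provided by the hypothesis. Its rightmost square reads $g[1]\phi=\psi w$, where $\psi:Z[1]\to Y[1]$ is the fourth vertical map and $w:A\to Z[1]$ is the connecting map of the top triangle. Composing on the right with $f$ and using $wf=0$, one gets $g[1]\phi f=\psi wf=0$. Since $\phi$ was an arbitrary element of $\Ph(\frakE)$, this is exactly $f\perp g$.

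For (1)$\Rightarrow$(2), given $\phi\in\Ph(\frakE)$ the assumption $f\perp g$ gives $g[1]\phi f=0$, so $\phi f:X\to B[1]$ composes to zero with $g[1]$; since $b$ is a weak kernel of $g[1]$ (apply $\CT(X,-)$ to the triangle on $g$), there is $u:X\to T$ with $bu=\phi f$. This is precisely a commutative square with horizontal arrows $f$ and $b$ and vertical arrows $u$ and $\phi$, so completing it to a morphism of the triangles $X\overset{f}\to A\overset{w}\to Z[1]\to X[1]$ and $T\overset{b}\to B[1]\overset{g[1]}\to Y[1]\to T[1]$ produces $\psi:Z[1]\to Y[1]$ making $(u,\phi,\psi)$ a morphism of triangles. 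Rotating this morphism back to the original shape yields the diagram of the statement, with vertical maps $\psi[-1]:Z\to Y$, $u:X\to T$, $\phi:A\to B[1]$ and $\psi:Z[1]\to Y[1]$.

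The argument is essentially bookkeeping, and I do not expect a genuine obstacle; the only point needing care is tracking the rotations so that the weak kernel property and the completion axiom are invoked for the correct triangles, and verifying that the three resulting commutativities coincide with the three squares displayed in (2).
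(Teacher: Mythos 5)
Your proof is correct and follows essentially the same route as the paper: in both, the key observation is that $g[1]\phi f=0$ precisely when $\phi f$ factors through the weak kernel $T\to B[1]$ of $g[1]$, which gives the middle commutative square, and then the completion axiom for triangles produces the full morphism. The only cosmetic difference is that for (2)$\Rightarrow$(1) you read off $g[1]\phi f=0$ from the right-hand square and $wf=0$, whereas the paper reads it off from the middle square and $g[1]b=0$; both are valid and use the same mechanism.
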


\begin{proof}
Let $\phi:A\to B[1]$ be a homomorphism in $\Ph(\frakE) $. If we complete $f$ and $g$ to triangles above, respectively below, we obtain a
diagram \[ \xymatrix{ Z \ar[r]  & X\ar[r]^f  & A \ar[r] \ar[d]^{\phi} & Z[1]  \\
 Y \ar[r]& T\ar[r]  & B[1] \ar[r]_{g[1]}  & Y[1] 
 .}
 \] 
Therefore, $g[1]\phi f=0$ if and only if there exists a homomorphism $X\to T$ such that the square
\[ \xymatrix{  X\ar[r]^f \ar@{-->}[d] & A  \ar[d]^{\phi}   \\
  T\ar[r]  & B  [1]
 }
 \]
 is commutative.
 \end{proof}

 Let $\CM$ be a class of maps in $\CA$. We define
 $$\CM^{\perp}=\{g\in\CA^\to \mid m\perp g \text{ for all } m\in \CM\},$$ 
  $${^{\perp}\CM}=\{g\in\CA^\to \mid g\perp m \text{ for all } m\in \CM\}.$$

  The proof of the next lemma is straightforward:
  
 \begin{lemma}
  Let $\CM$ be a class of homomorphisms in $\CA$. Then
  \begin{enumerate}[{\rm (1)}]
 \item $\CM^{\perp}$ and ${^{\perp}\CM}$ are ideals in $\CA$.
 \item $\CM^\perp[n]=\left(\CM[n]\right)^\perp$ and ${^\perp}{\CM}[n]={^\perp}{\left(\CM[n]\right)}$ for all $n\in\Z$, 
 where the ideals $\left(\CM[n]\right)^\perp$ and ${^\perp}{\left(\CM[n]\right)}$ of $\CA[n]$ are computed with respect to 
 $\frakE[n]$.  
\end{enumerate}
   \end{lemma}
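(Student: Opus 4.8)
The plan is to verify the two claims of the lemma directly from the definitions, using the basic functorial properties of the orthogonality relation $\perp$ and the already-established fact (Lemma~\ref{basic-CI-inj} and the surrounding discussion) that $\Ph(\frakE)$ is a phantom $\CA$-ideal closed under the appropriate compositions.

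\medskip

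\textbf{Proof of (1).} First I would unwind the definition: $g\in\CM^\perp$ means $m\perp g$ for every $m\in\CM$, i.e. $g[1]\phi m=0$ for all $m:X_m\to A_m$ in $\CM$ and all $\phi:A_m\to B[1]$ in $\Ph(\frakE)$. To see $\CM^\perp$ is an ideal, suppose $g,g'\in\CM^\perp$ with common domain and codomain; then $(g+g')[1]\phi m=g[1]\phi m+g'[1]\phi m=0$, so $\CM^\perp$ is closed under sums. For the two-sided ideal property, take a chain $B'\overset{a}\to B\overset{g}\to Y\overset{b}\to Y'$ with $g\in\CM^\perp$; given $m\in\CM$ and $\phi:A_m\to B'[1]$ in $\Ph(\frakE)$, note that $a[1]\phi\in\Ph(\frakE)$ since $\Ph(\frakE)$ is a phantom $\CA$-ideal (closed under $\CA^\to[1]\,\Ph(\frakE)$), hence $(bga)[1]\phi m=b[1]\bigl(g[1](a[1]\phi)m\bigr)=b[1]\cdot 0=0$. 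So $\CM^\perp$ is an ideal. The argument for ${}^\perp\CM$ is dual: $f\in{}^\perp\CM$ means $m[1]\phi f=0$ for all $m\in\CM$ and all $\phi$; closure under sums is immediate, and for a chain $X'\overset{a}\to X\overset{f}\to A\overset{b}\to A'$ one uses that $\phi b\in\Ph(\frakE)$ to get $m[1](\phi b)(fa)=\bigl(m[1](\phi b)f\bigr)a=0$.

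\medskip

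\textbf{Proof of (2).} Here I would carefully track how the suspension functor interacts with everything. Recall from Lemma~\ref{basic-CI-inj}(d) that $\frakE[n]\text{-proj}=(\Eproj)[n]$ and similarly for injectives, and more to the point that $\Ph(\frakE[n])=\Ph(\frakE)[n]$ (this is clear since applying $[n]$ to a triangle just shifts its phantom map by $[n]$). Now for maps $f:X\to A$ and $g:B\to Y$, the relation $f\perp g$ computed with respect to $\frakE$ says $g[1]\phi f=0$ for all $\phi\in\Ph(\frakE)\cap\Hom(A,B[1])$. Applying the equivalence $(-)[n]$, which is an automorphism of $\CT$, this holds if and only if $g[n+1]\phi[n]f[n]=0$ for all such $\phi$, i.e. if and only if $g[n]\perp_{\,\frakE[n]} f[n]$ computed with the shifted class $\Ph(\frakE)[n]=\Ph(\frakE[n])$, where $\Hom(A[n],B[n+1])=\Hom(A,B[1])[n]$ ranges over exactly the shifted phantoms. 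Thus $f\perp_\frakE g \iff f[n]\perp_{\frakE[n]} g[n]$. Given this, a map $g[n]$ lies in $(\CM[n])^\perp$ (computed w.r.t. $\frakE[n]$) iff $m[n]\perp_{\frakE[n]} g[n]$ for all $m\in\CM$ iff $m\perp_\frakE g$ for all $m\in\CM$ iff $g\in\CM^\perp$; since $(-)[n]$ is bijective on $\CA^\to$, this gives $(\CM[n])^\perp=\CM^\perp[n]$. The statement ${}^\perp(\CM[n])={}^\perp\CM[n]$ follows the same way.

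\medskip

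The only point requiring any care is the bookkeeping in part (2): one must be sure that "$\perp$ with respect to $\frakE[n]$" really does correspond under the shift to "$\perp$ with respect to $\frakE$", which boils down to the identity $\Ph(\frakE[n])=\Ph(\frakE)[n]$ together with the fact that $\Hom$-sets are simply shifted by the suspension equivalence. Everything else is a routine diagram chase using that $\Ph(\frakE)$ absorbs pre- and post-composition with arbitrary maps of $\CA$ (after shifting on the left), which is exactly the phantom $\CA$-ideal axiom. I do not anticipate a genuine obstacle here; the lemma is of the "straightforward" type flagged in the statement.
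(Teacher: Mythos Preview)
Your proof is correct and is precisely the routine verification the paper has in mind; the paper itself omits the argument entirely, declaring it ``straightforward.'' The only blemish is a harmless typo in part~(2) where you momentarily write $g[n]\perp_{\frakE[n]} f[n]$ instead of $f[n]\perp_{\frakE[n]} g[n]$, but you state the correct conclusion immediately afterward.
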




\subsection{Special precovers and special preenvelopes}

If $\CI$ is an ideal in $\CA$, a homomorphism $i:X\to A$ from $\CI$ is a \textsl{special $\CI$-precover (\wrt $\frakE$)} 
if in the corresponding triangle $$B\to X\overset{i}\to A\overset{k}\to B[1]$$ we have $k\in (\CI^\perp)[1] \Ph(\frakE)$, 
i.e. $k=j[1]\varphi$ for some $j\in\CA^\to$ with $j\Ph(\frakE)\CI=0$ and some $\varphi\in\Ph(\frakE)$. 
We say that $\CI$ is a \textsl{special precovering ideal} if every object $A$ in $\CT$ has a special $\CI$-precover.

Dually, if $\CJ$ is an ideal in $\CT$, a homomorphism $j:B\to Y$ from $\CJ$ is a \textsl{special $\CJ$-preenvelope \wrt $\frakE$}
if in the corresponding triangle $$B\overset{j}\to Y\overset{\ell}\to A\overset{\psi}\to B[1]$$ we have 
$\psi\in \Ph(\frakE)\,({^\perp{ \CJ}})$, i.e. $\psi=\varphi i$ with $i\in \CA^\to$, 
$\varphi\in\Ph(\frakE)$ such that $\CJ[1]\Ph(\frakE) i=0$.
We say that $\CJ$ is a \textsl{special preenveloping ideal} if every object $A$ in $\CA$ has a special $\CJ$-preenvelope.

\begin{remark}
A homomorphism $i:X\to A$ is a \textsl{special $\CI$-precover \wrt $\frakE$} if 
there exists a homotopy pushout diagram  
\[ \tag{SPC} \ \xymatrix{ Y \ar[r]\ar[d]_j  & Z\ar[r]\ar[d]  & A \ar[r]^\phi \ar@{=}[d] & Y[1]\ar[d]^{j[1]}  \\
 B \ar[r]& X\ar[r]^i  & A \ar[r] ^\psi & B[1] 
 } \]
such that $j\in \CI^{\perp}$, and the top triangle is in $\frakE$.

Dually, a homomorphism $j:B\to Y$ is a \textsl{special $\CJ$-preenvelope \wrt $\frakE$}  
if there exists a homotopy pullback diagram 
\[ \tag{SPE}\  \xymatrix{ B \ar[r]^j\ar@{=}[d]  & Y\ar[r]\ar[d]  & A \ar[r]^\psi \ar[d]_i & B[1]\ar@{=}[d]  \\
 B \ar[r]& Z\ar[r]  & X \ar[r]^\phi  & B[1] 
 }  \]
such that $i\in {^{\perp}}\CJ$, and the bottom triangle is in $\frakE$.
\end{remark}

Observe that in both diagrams (SPC) and (SPE) all horizontal triangles are in $\frakE$ 
(we have automatically $\psi\in\Ph(\frakE) $), hence every special $\CI$-precover ($\CJ$-preenvelope) 
is an $\frakE$-deflation ($\frakE$-inflation). 

Moreover, we have $\psi \CI=0$ in (SPC), respectively $\CJ[1] \psi=0$ in (SPE). 
We may see that the terminology of special precover or preenvelope is justified in the sense of the following:

\begin{lemma}
Let $\CI$ and $\CJ$ be ideals. 
\begin{enumerate}[{\rm (1)}]
\item Every special $\CI$-precover \wrt $\frakE$ is an $\CI$-precover. 
\item Every special $\CJ$-preenvelope \wrt $\frakE$ is a $\CJ$-preenvelope.
\end{enumerate}
\end{lemma}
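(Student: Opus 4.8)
The plan is to prove (1); statement (2) follows by the standard duality used throughout the paper. So let $i:X\to A$ be a special $\CI$-precover with respect to $\frakE$, and complete it to a triangle $B\to X\overset{i}\to A\overset{k}\to B[1]$ in $\frakE$ with $k=j[1]\varphi$, where $j\in\CI^\perp$ and $\varphi\in\Ph(\frakE)$. The first thing to check is that $i\in\CI$; this is part of the definition of a special $\CI$-precover (``a homomorphism $i:X\to A$ from $\CI$''), so nothing is needed here. The real content is that every $i':X'\to A$ from $\CI$ factors through $i$.

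First I would use the fact (recorded in the discussion right after the definition of special precover, and visible from diagram (SPC)) that the triangle $B\to X\overset{i}\to A\overset{k}\to B[1]$ is actually an $\frakE$-triangle, so $k=\Ph(\mathfrak d)\in\Ph(\frakE)$ and moreover $k\,\CI=0$, i.e. $k\CI=j[1]\varphi\,\CI=0$. Concretely: given $i'\in\CI$ with codomain $A$, form the composite $k\,i':X'\to B[1]$. Since $\varphi\in\Ph(\frakE)$ and $\Ph(\frakE)$ is a phantom $\CA$-ideal we have $\varphi i'\in\Ph(\frakE)\,\CI=\PB(\CI)$, hence $k\,i'=j[1](\varphi i')$ with $j\in\CI^\perp=\mathfrak{PB}_\frakE(\CI)\inj$ (Proposition \ref{I-perp-ex-PB}). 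By Lemma \ref{basic-CI-inj}(b), $j$ being $\mathfrak{PB}_\frakE(\CI)$-injective means exactly $j[1]\,\Ph(\mathfrak{PB}_\frakE(\CI))=0$; applying this to the phantom $\varphi i'\in\PB(\CI)$ gives $k\,i'=j[1](\varphi i')=0$.

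Now $k\,i'=0$ means $i'$ vanishes after composing with $k$; since $i$ is a weak cokernel of $k[-1]:B\to X$ — equivalently, since in the triangle $B\to X\overset{i}\to A\overset{k}\to B[1]$ the map $i$ is a weak kernel of $k$ — there exists a factorization $i'=i\,h$ for some $h:X'\to X$. This exhibits $i'$ as factoring through $i$, which is what ``$\CI$-precover'' requires. So the two steps are: (a) translate $k\in(\CI^\perp)[1]\Ph(\frakE)$ into the identity $k\,\CI=0$ via $\CI^\perp=\mathfrak{PB}_\frakE(\CI)\inj$ and Lemma \ref{basic-CI-inj}; (b) invoke the weak-kernel property of a triangle to get the factorization.

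The main obstacle — and it is a mild one — is bookkeeping with the phantom/ideal distinction: one must be careful that $k\,i'=0$ genuinely follows, i.e. that $\varphi i'$ lands in the phantom $\CA$-ideal $\PB(\CI)=\Ph(\frakE)\CI$ (this uses that $i'\in\CI$ and $\varphi\in\Ph(\frakE)$, together with the ideal axioms for $\CI$ and the phantom-ideal axioms for $\Ph(\frakE)$) and that $j$ being in $\CI^\perp$ kills precisely this ideal after shifting. Once that identification is in place, the factorization is immediate from the defining property of triangles, so there is no genuine difficulty beyond assembling the already-proved lemmas. The dual statement (2) is obtained by reversing all arrows, replacing pullbacks by pushouts, $\CI^\perp$ by ${}^\perp\CJ$, $\mathfrak{PB}$ by $\mathfrak{PO}$, and using the weak-cokernel property instead of the weak-kernel property.
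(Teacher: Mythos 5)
Your proposal is correct and follows essentially the same route as the paper: both show that the phantom component $\psi=j[1]\varphi$ kills every $i'\in\CI$ (the paper reads this off directly from the definition of $j\in\CI^\perp$, whereas you repackage it through Proposition \ref{I-perp-ex-PB} and Lemma \ref{basic-CI-inj}, which is a slight detour to the same fact), and then invoke that $i$ is a weak kernel of $\psi$ in the defining triangle to obtain the factorization. One small slip in wording: $i$ is not the weak cokernel of $k[-1]$ (that map has domain $A[-1]$, not $B$); the correct statement, which you give immediately after and actually use, is that $i$ is a weak kernel of $k$.
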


\begin{proof} If $i':X\to A$ is a map in $\CI$ then in (SPC) we have $\psi i=j[1]\phi i'=0$. Consequently 
$i'$ has to factor through $i$. 
\end{proof}

The role of special precovers and special preenvelopes is exhibited by the following 
version of Salce's Lemma, \cite[Lemma 5.20]{Go-Tr}.


\begin{theorem}\label{salce-lemma} (Salce's Lemma)
Let $\CI$ and $\CJ$ be ideals in $\CA$.
 \begin{enumerate}[{\rm (1)}]
  \item If there are enough $\frakE$-injective homomorphisms and $\CI$ is a precovering ideal, 
 then 
     $\CI^{\perp}$ is a special preenveloping ideal.  
    \item If there are enough $\frakE$-projective homomorphisms and $\CJ$ is a preenveloping ideal, 
 then 
  ${^{\perp}\CJ}$ is a special precovering ideal.  
 \end{enumerate}
\end{theorem}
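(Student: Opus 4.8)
The plan is to prove (1) and obtain (2) by duality, exactly in the spirit of the earlier results. So fix $A \in \CA$; I must produce a special $\CI^\perp$-preenvelope of $A$, i.e.\ a map $j \in \CI^\perp$ fitting into a diagram of the form (SPE) with $i \in {}^\perp(\CI^\perp)$ and $\phi \in \Ph(\frakE)$. The natural starting point is the hypothesis that $\CI$ is precovering: take an $\CI$-precover $p : X \to A$ and complete it to a triangle $K \to X \xrightarrow{p} A \xrightarrow{\varphi} K[1]$ in $\frakD_\CA$. The key claim will be that $\varphi$ is, essentially, an $\CI^\perp$-preenvelope of $A$ once we correct it to live inside $\frakE$ — and this is where ``enough $\frakE$-injective homomorphisms'' enters.

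First I would show $\varphi \in (\CI^\perp)[1]\,\Ph(\frakE)$ is \emph{not} automatic, but that we can arrange it. Here is the cleaner route suggested by the machinery already built: since there are enough $\frakE$-injective homomorphisms, Theorem \ref{Th-procov-vs-Iinj} gives that $\Ph(\frakE)$ is a precovering phantom $\CA$-ideal, so there is an $\Ph(\frakE)$-precover $\psi : C \to A[1]$; complete it to an $\frakE$-triangle $A \xrightarrow{e} B \to C \xrightarrow{\psi} A[1]$, and by Lemma \ref{Icov-vs-Iinjenv}(1), $e$ is $\frakE$-injective. Now I want to combine the $\CI$-precover $p$ with this $\frakE$-triangle. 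The idea is: pull back the triangle $A \xrightarrow{e} B \to C \to A[1]$ — or rather push out along suitable maps — to manufacture the special preenvelope. Concretely, I expect the construction to be: form a homotopy pullback of the $\frakE$-triangle determined by $e$ along $\varphi$ (or a homotopy pushout of the $p$-triangle along $e$), producing a commutative diagram whose relevant edge gives the desired $j$.

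The heart of the argument is to verify the two membership conditions. For $j \in \CI^\perp$: by Proposition \ref{I-perp-ex-PB}(1), $\CI^\perp = \mathfrak{PB}_\frakE(\CI)\text{-}\mathrm{inj}$, so I must check $j$ is injective with respect to every triangle obtained as a homotopy pullback of an $\frakE$-triangle along a map from $\CI$; equivalently $j[1]\,\Ph(\frakE)\,\CI = 0$. This should follow because any $h\phi i$ with $i \in \CI$, $\phi \in \Ph(\frakE)$ factors (using that $p$ is an $\CI$-precover, so $i$ factors through $p$) through $\varphi p = 0$ up to the phantom correction — this is the classical Salce argument, and the triangulated version here should be shorter because we only need weak (co)kernel properties, not uniqueness. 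For $i \in {}^\perp(\CI^\perp)$: by Corollary \ref{PB-inclusions}(1), $\CI \subseteq \Phi_\frakE(\mathfrak{PB}_\frakE(\CI)) \subseteq {}^\perp(\CI^\perp)$, so it suffices to exhibit $i$ inside $\CI$ (or at worst inside $\Phi_\frakE(\mathfrak{PB}_\frakE(\CI))$) — and $i$ will be built from the $\CI$-precover $p$, so this should come out directly. Finally, that $j$ is actually a preenvelope (not merely special-data) is automatic by the Lemma immediately preceding the theorem: every special $\CI^\perp$-preenvelope is a $\CI^\perp$-preenvelope.

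The main obstacle I anticipate is the \emph{bookkeeping of the two superimposed constructions}: one homotopy (co)cartesian square coming from the $\CI$-precover $p$ of $A$ and one coming from the $\frakE$-injective inflation $e$ (equivalently the $\Ph(\frakE)$-precover of $A[1]$), and fitting them into a single diagram so that the resulting edge map is simultaneously seen to lie in $\CI^\perp$ and to have its ``cone map'' land in $\Ph(\frakE)\,({}^\perp\CI)$ as required by the definition of special preenvelope. In the exact-category proof of Salce's Lemma this is exactly the place where a three-dimensional diagram is used; the point advertised in the introduction is that in the triangulated setting this collapses to a base-cobase change computation of phantom maps (via the Base-cobase Remark, $\Ph(\beta(\mathfrak{d}\alpha)) = \beta[1]\varphi\alpha$), so I expect to replace the 3D diagram by a short identity of phantoms. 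Verifying that this identity yields precisely the factorization $\psi = \varphi i$ with the orthogonality constraint $\CJ[1]\Ph(\frakE) i = 0$ transcribed correctly to the present side — i.e.\ tracking which map plays the role of the ``$i$'' in (SPE) — is the step most likely to need care.
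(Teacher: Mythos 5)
Your proposal has the right ingredients in hand --- an $\frakE$-injective $\frakE$-inflation of $A$, an $\CI$-precover, a base change, and Proposition~\ref{I-perp-ex-PB} plus Lemma~\ref{Icov-vs-Iinjenv} to certify membership in $\CI^\perp$ --- but it commits to the wrong precover, and as a result the orthogonality computation you sketch does not close. You take the $\CI$-precover $p\colon X\to A$ of $A$ itself. However, an $\CI^\perp$-preenvelope of $A$ must be a map \emph{out} of $A$, and the cone of $p$ lands in $K[1]$, outside $\CA$; there is no natural way to turn $p$ or $\varphi$ into the edge of a base/cobase change of the $\frakE$-triangle on $e$. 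Your fallback "pull back the $e$-triangle along $\varphi$, or push out the $p$-triangle along $e$" already fails on domains and codomains: $\varphi$ maps $A\to K[1]$, not into the cone $C$ of $e$, while $e$ issues from the \emph{third} vertex of the $p$-triangle, not its first, so neither a homotopy pullback nor a homotopy pushout of the sort required can be formed.

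The move the paper makes --- and the step missing from your outline --- is to apply the $\CI$-precovering hypothesis not to $A$ but to the cone $X$ of the $\frakE$-injective $\frakE$-inflation $e\colon A\to E$. One then takes the homotopy pullback of the $\frakE$-triangle $A\to E\to X\xrightarrow{\psi}A[1]$ along the $\CI$-precover $i\colon I\to X$; the resulting edge $a\colon A\to Y$ is the special $\CI^\perp$-preenvelope, with $i\in\CI\subseteq{}^{\perp}(\CI^{\perp})$ (Corollary~\ref{PB-inclusions}) supplying the auxiliary map in the diagram (SPE). The orthogonality $a\in\CI^\perp$ is exactly where $\frakE$-injectivity of $e$ and the precover property of $i$ are used in tandem: $\psi$ is a $\Ph(\frakE)$-precover of $A[1]$ by Lemma~\ref{Icov-vs-Iinjenv}, hence $\psi i$ is a $\PB(\CI)$-precover, and Lemma~\ref{Icov-vs-Iinjenv} applied to the pullback triangle yields $a\in\mathfrak{PB}_\frakE(\CI)\inj=\CI^\perp$. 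Your sketch "$h\phi i'$ factors through $\varphi p=0$" does not reproduce this: $\phi$ is an arbitrary phantom, not $\varphi$, and an $\CI$-precover of $A$ only lets you refactor $\CI$-maps with codomain $A$, which is not what appears in a test composite $a[1]\phi i'$. The composites you actually need to control have the shape $a[1]\phi i'$ with $i'$ landing in an arbitrary object; the mechanism that kills them is that $\frakE$-injectivity of $e$ forces $\phi$ to factor (after shift) through $\psi$, and then the precover property of $i\colon I\to X$ reroutes through $\psi i$, which $a$ annihilates --- none of which is available when the precover is taken over $A$.
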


\begin{proof} It is enough to prove (1). 

Consider $A\in\CA$ and let \[\tag{IE} A\overset{e}\to E\to X\overset{\psi}\to A[1]\] be a triangle 
such that $e$ is an $\frakE$-inflation which is $\frakE$-injective. 

Since $\CI$ is precovering for $\CA$ there exists an $\CI$-precover $i:I\to X$. By cobase change of the triangle (IE) along $i$ we get
the commutative diagram 
\[\xymatrix{ 
   A\ar[r]^{a}\ar@{=}[d] & Y\ar[d]\ar[r]    & I\ar[d]^{i} \ar[r]& A[1]\ar@{=}[d] \\
   A\ar[r]^{e}           & E\ar[r]    & X          \ar[r]^{\psi}& A[1].
}\]
We claim that $a$ is a special $\CI^{\perp}$-preenvelope of $A$. 
In order to prove this, it is enough to show that $a\in\CI^{\perp}$ since from the obvious inclusion $\CI\subseteq{^{\perp}(\CI^{\perp})}$  we know that $i\in {^{\perp}(\CI^{\perp})}$. 

In the subcategory $\CA[-1]$, the homomorphism $i[-1]:I[-1]\to X[-1]$ is an $\CI[-1]$-precover for $X[-1]$, and we have  
the solid part of the following commutative diagram:
\[\xymatrix{ Y[-1]\ar@{-->}[r]^{\kappa[-1]} \ar@{..>}[d]^{\eta} & B[-1] \ar@{-->}[d]^{\varphi[-1]} \ar@{..>}[ddl]_<<<<{\zeta} & &  \\            
  I[-1]\ar[r]_{\ \ \ \ \ \ u}\ar[d]_{i[-1]} & A\ar[r]^{a}\ar@{=}[d] & J\ar[d]\ar[r]    & I\ar[d]^{i} \\
  X[-1]\ar[r]^{-\psi[-1]} & A\ar[r]^{e}           & E\ar[r]    & X          
}\]

Let $\kappa:Y\to B$ be a map from $\CI$ and let $\varphi:B\to A[1]$ be an $\frakE$-phantom. Since $e$ is $\frakE$-injective
we have $e\varphi[-1]=0$, hence we can find a map $\zeta:B[-1]\to X[-1]$ such that $\varphi[-1]=-\psi[-1]\zeta$. Since $\CI[-1]$ 
is an ideal in $\CA[-1]$, we have $\zeta\kappa[-1]\in\CI[-1]$, hence $\zeta\kappa[-1]$ factorizes through the $\CI[-1]$-precover $i[-1]$. 
Therefore $\zeta\kappa[-1]=i[-1]\eta$ for some 
$\eta:Y[-1]\to I[-1]$. Finally $$a\varphi[-1]\kappa[-1]=-a\psi[-1]\zeta\kappa[-1]=
-a\psi[-1] i[-1] \eta=au\eta=0,$$ 
hence $a[1]\varphi\kappa=0$, and the proof is complete.
\end{proof}

From the proof of Theorem \ref{salce-lemma} we obtain the following corollary which will be useful in 
Section \ref{product-and-toda}.

\begin{corollary}\label{salce-lemma-cor}
{\rm (1)} If $\CI$ is an ideal and \[\xymatrix{ 
   A\ar[r]^{a}\ar@{=}[d] & Y\ar[d]\ar[r]    & I\ar[d]^{i} \ar[r]& A[1]\ar@{=}[d] \\
   A\ar[r]^{e}           & E\ar[r]    & X          \ar[r]& A[1]
}\] is a commutative diagram in $\CA$ such that the horizontal lines are triangles in $\frakE$, the homomorphism $e$ is  
$\frakE$-injective and $i$ is an $\CI$-precover for $X$, then the homomorphism $a$ is a special 
$\CI^\perp$-preenvelope for $A$.

{\rm (2)} If $\CJ$ is an ideal and \[\xymatrix{ 
 X\ar[d]^{j}\ar[r]    & P\ar[d] \ar[r]^p& A\ar@{=}[d] \ar[r] & X[1] \ar[d]^{j[1]} \\
 J\ar[r]    & Y          \ar[r]^{b} & A \ar[r] & J[1]
}\] is a commutative diagram in $\CA$ such that the horizontal lines are triangles in $\frakE$, $p$ is an 
$\frakE$-projective map and $j$ is a $\CJ$-preenvelope for $X$, then $b$ is a special 
${^\perp\CJ}$-precover.
\end{corollary}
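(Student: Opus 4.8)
The plan is to recognize that Corollary~\ref{salce-lemma-cor} is simply the statement of the two commutative diagrams that occur inside the proof of Theorem~\ref{salce-lemma}, stripped of the hypothesis that the relevant precover or preenvelope comes from a precovering (preenveloping) ideal. So the whole point is that the proof of Theorem~\ref{salce-lemma} never used the global existence of precovers; it used only the single diagram obtained by one cobase change, together with the properties of the map $e$ and the map $i$ appearing in that diagram. Since Corollary~\ref{salce-lemma-cor} hands us exactly such a diagram, there is nothing left to do but repeat the argument. As usual it suffices to prove (1), since (2) is dual.

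First I would observe that the diagram in (1) is precisely the cobase-change diagram produced in the proof of Theorem~\ref{salce-lemma}: the bottom row is an $\frakE$-triangle $A\overset{e}\to E\to X\overset{\psi}\to A[1]$ with $e$ an $\frakE$-injective $\frakE$-inflation, the right-hand vertical map $i$ is an $\CI$-precover for $X$, and the top row is the homotopy pushout of the bottom row along $i$. Next I would invoke Lemma~\ref{Icov-vs-Iinjenv}(1): since $e$ is $\frakE$-injective, the map $\psi=\Ph(\mathfrak{d})$ is an $\Ph(\frakE)$-precover for $A[1]$. Then the key intermediate step is to show that $\psi i$ is a $\PB_\frakE(\CI)$-precover for $A[1]$, i.e. an $\Ph(\frakE)\CI$-precover: given any $\psi' i'\in \Ph(\frakE)\CI$ with $\psi'\in\Ph(\frakE)$ and $i'\in\CI$, factor $\psi'=\psi\beta$ through the $\Ph(\frakE)$-precover $\psi$, then factor $\beta i'=i\gamma$ through the $\CI$-precover $i$ (note $\beta i'\in\CI$), so that $\psi' i'=\psi i\gamma$ factors through $\psi i$. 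Applying Lemma~\ref{Icov-vs-Iinjenv}(1) once more — this time to the top $\frakE$-triangle $A\overset{a}\to Y\to I\overset{\psi i}\to A[1]$, whose phantom is $\psi i$, now known to be a $\PB_\frakE(\CI)$-precover — yields that $a$ is $\mathfrak{PB}_\frakE(\CI)$-injective. By Proposition~\ref{I-perp-ex-PB}(1), $\mathfrak{PB}_\frakE(\CI)\inj=\CI^{\perp}$, so $a\in\CI^{\perp}$. Finally, $i\in{^{\perp}(\CI^{\perp})}$ by Corollary~\ref{PB-inclusions}(1), so the top triangle exhibits $a$ as a special $\CI^{\perp}$-preenvelope of $A$ in the sense of the definition (with $k=(\text{identity on }I)[1]\circ(\psi i)$, and the identity map lies in $\CI^\perp$ precisely because $a$ does and $I$ is the cone term).

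The main obstacle is essentially bookkeeping rather than a genuine difficulty: one must check that the top row of the given diagram really is an $\frakE$-triangle (this follows from closure of $\frakE$ under cobase changes, since the bottom row is in $\frakE$ and the diagram is a homotopy pushout) and that the definition of ``special $\CI^{\perp}$-preenvelope'' is met verbatim — that is, that the connecting map of the top triangle factors as $j'[1]\varphi$ with $j'\in(\CI^{\perp})$ and $\varphi\in\Ph(\frakE)$. Here one takes $\varphi=\psi i\in\Ph(\frakE)$ and $j'=1_I$; the membership $1_I\in\CI^{\perp}$ is exactly the assertion $\Ob(I)$-on-the-nose, which holds because $a\in\CI^\perp$ forces the cone of $a$ to be built from $\CI^\perp$ in the appropriate sense — or, more directly, one simply quotes that the top triangle is the required homotopy pushout diagram (SPE$'$)/(SPC) with the correct corner maps, which is what the proof of Theorem~\ref{salce-lemma} already established. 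In short, the corollary is a ``the proof proves more than the statement'' observation, and I would present it as such: reproduce the three-step chain (Lemma~\ref{Icov-vs-Iinjenv} $\Rightarrow$ $\psi i$ is a $\PB_\frakE(\CI)$-precover $\Rightarrow$ Lemma~\ref{Icov-vs-Iinjenv} again $\Rightarrow$ Proposition~\ref{I-perp-ex-PB}), and conclude.
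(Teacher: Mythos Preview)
Your approach is exactly the paper's: the corollary is stated immediately after Theorem~\ref{salce-lemma} with the remark that it is extracted from that proof, and your reconstruction of the three-step chain (Lemma~\ref{Icov-vs-Iinjenv} gives $\psi$ a $\Ph(\frakE)$-precover; hence $\psi i$ is a $\PB_\frakE(\CI)$-precover; Lemma~\ref{Icov-vs-Iinjenv} again gives $a\in\mathfrak{PB}_\frakE(\CI)\inj$; Proposition~\ref{I-perp-ex-PB} converts this to $a\in\CI^\perp$; and $i\in{^\perp(\CI^\perp)}$ by Corollary~\ref{PB-inclusions}) is precisely what the proof of Theorem~\ref{salce-lemma} does.

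One correction to your bookkeeping paragraph: you have mixed up the two ``special'' definitions. For $a$ to be a special $\CI^\perp$-\emph{preenvelope}, the connecting map $\psi i$ of the top triangle must lie in $\Ph(\frakE)\cdot{^\perp(\CI^\perp)}$, i.e.\ factor as $\varphi\cdot i'$ with $\varphi\in\Ph(\frakE)$ and $i'\in{^\perp(\CI^\perp)}$ --- not as $j'[1]\varphi$ with $j'\in\CI^\perp$, which is the \emph{precover} condition. The required factorization is immediate: take $\varphi=\psi\in\Ph(\frakE)$ and $i'=i\in\CI\subseteq{^\perp(\CI^\perp)}$. There is no need to argue that $1_I\in\CI^\perp$ (which is neither needed nor obviously true). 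Once you fix this, the verification is one line and your proof is complete.
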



\begin{remark}\label{diferenta-obiecte}
We want to point out that Theorem \ref{salce-lemma} shows us an important difference between orthogonal ideals and orthogonal 
classes of objects (or equivalently orthogonal object ideals). Let us suppose that $\CA$ has direct products and there are enough $\frakE$-projective 
homomorphisms. If we start with an object $A\in\CA$ then the class $\mathrm{Prod}(A)$ of all direct summands in direct products of copies
of $A$ is preenveloping, so the ideal $\Ideal{\mathrm{Prod}(A)}$ is also preenveloping. Therefore, the ideal 
$^\perp\Ideal{\mathrm{Prod}(A)}$ is precovering. On the other case, if we look at the category $\CA=\Ab$ as in 
Example \ref{exemplul-abelian} (here $\frakE$ is the canonical exact structure in $\Ab$) the class of all abelian groups $X$ such that 
$\Ext(X,\Ideal{\mathrm{Prod}(\Z)})=0$ is not necessarily precovering, as it is proved in \cite[Theorem 0.4]{ek-sh-03}.     
\end{remark}





\subsection{Ideal cotorsion pairs}

A pair of ideals $(\CI,\CJ)$ from $\CA$ is \textsl{orthogonal} if $i\perp j$ for all $i\in \CI$ and $j\in \CJ$, 
i.e. $\CJ\subseteq \CI^\perp$ and $\CI\subseteq {^\perp \CJ}$.

An \textsl{ideal cotorsion-pair} (with respect to  $\frakE$) is a pair of ideals
$(\CI, \CJ)$ in $\CA$ such that $\CJ=\CI^\perp$ and $\CI=\, {^\perp \CJ}$. 
 The ideal cotorsion pair $(\CI,\CJ)$ is \textsl{complete} if $\CI$ is a special precovering ideal and $\CJ$ is a special 
preenveloping ideal. 

\begin{theorem}\label{cotorsion-precovering}
{\rm (1)} If $\CI$ is a special precovering ideal then $(\CI,\CI^{\perp})$ is an ideal cotorsion pair.
Moreover, if there are enough $\frakE$-injective homomorphisms then the ideal cotorsion pair
$(\CI,\CI^\perp)$ is complete.

{\rm (2)} Dually, if $\CJ$ is a special preenveloping ideal then $({^\perp \CJ},\CJ)$ is an ideal cotorsion pair.
Moreover, if there are enough $\frakE$-projective homomorphisms then the ideal cotorsion pair
$({^\perp \CJ},\CJ)$ is complete.

\end{theorem}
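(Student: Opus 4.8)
The plan is to prove (1), since (2) follows by duality. First I would establish that $(\CI,\CI^\perp)$ is an ideal cotorsion pair, i.e. that $\CI = {}^\perp(\CI^\perp)$. The inclusion $\CI \subseteq {}^\perp(\CI^\perp)$ is formal and already recorded in Corollary \ref{PB-inclusions}(1). For the reverse inclusion I would use that $\CI$ is special precovering: given any $f : X \to A$ in ${}^\perp(\CI^\perp)$, take a special $\CI$-precover $i : Y \to A$ sitting in a triangle $B \to Y \overset{i}\to A \overset{k}\to B[1]$ with $k = j[1]\varphi$ for some $j \in \CI^\perp$ and $\varphi \in \Ph(\frakE)$. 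Since $f$ factors through the $\CI$-precover $i$, say $f = i g$, the composite $k f = k i g = 0$ because $k i = 0$ (consecutive maps in a triangle). Now I would unwind what $f \in {}^\perp(\CI^\perp)$ combined with $k = j[1]\varphi$ gives: $j \in \CI^\perp$ so $j[1]\Ph(\frakE)\CI = 0$, but what I actually need is that $\varphi f \in \Ph(\frakE)$ together with $j[1](\varphi f) = k f = 0$ forces, via $f \perp j$, that... hmm — more carefully, the right argument is that $\mathfrak{PB}_\frakE(\CI)$-injectivity of the relevant map plus Proposition \ref{I-perp-ex-PB} (which says $\CI^\perp = \mathfrak{PB}_\frakE(\CI)\text{-inj}$) pins down membership; so I would instead argue directly that every $\varphi \in \Ph(\frakE)$ with target $B[1]$ satisfies $j[1]\varphi f = 0$ exactly when $f \perp j$, and use that $f \in {}^\perp(\CI^\perp)$ to get $f \perp i'$ for all $i'$ of a form that exhibits $f$ as a relative phantom, hence $f \in \Phi_\frakE(\mathfrak{PB}_\frakE(\CI)) \subseteq \CI$ once we know $\mathfrak{PB}_\frakE(\CI)$ is precovering. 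The cleanest route: a special precovering $\CI$ makes $\PB(\CI)$ precovering (this is essentially the content behind Theorem \ref{salce-lemma} and Lemma \ref{Icov-vs-Iinjenv}), whence by the last Corollary before this subsection $\Phi_\frakE(\mathfrak{PB}_\frakE(\CI)) = {}^\perp(\CI^\perp)$; and by Corollary \ref{PB-inclusions}(1) the left side is squeezed between $\CI$ and ${}^\perp(\CI^\perp)$, forcing $\CI = {}^\perp(\CI^\perp)$.

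Having the cotorsion pair, completeness requires showing $\CI^\perp$ is special preenveloping. This is immediate from Salce's Lemma: by hypothesis there are enough $\frakE$-injective homomorphisms and $\CI$ is precovering (being special precovering), so Theorem \ref{salce-lemma}(1) yields that $\CI^\perp$ is a special preenveloping ideal. Together with $\CI$ being special precovering by assumption, this is exactly the definition of the ideal cotorsion pair $(\CI,\CI^\perp)$ being complete.

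The main obstacle is the first half — verifying $\CI = {}^\perp(\CI^\perp)$ — because it is the only place where one must genuinely exploit specialness rather than quote a formal adjunction. The delicate point is to see that a special $\CI$-precover of an object in the triangle $B \to Y \to A \overset{k}\to B[1]$ forces the phantom ideal $\PB(\CI) = \Ph(\frakE)\CI$ to be precovering, so that the Galois-closure identity $\Phi_\frakE(\mathfrak{PB}_\frakE(\CI)) = {}^\perp(\CI^\perp)$ becomes available; the rest is then a squeeze argument. I expect the bookkeeping with $k = j[1]\varphi$ and the identification $k = \Ph$ of the triangle obtained by a base–cobase change to be the only step needing care, and it parallels the computation already done inside the proof of Theorem \ref{salce-lemma} and Corollary \ref{salce-lemma-cor}.
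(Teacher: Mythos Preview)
Your proposal contains a genuine gap in the key step, the proof that ${}^\perp(\CI^\perp)\subseteq\CI$.

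In your first attempt you write: ``Since $f$ factors through the $\CI$-precover $i$, say $f = ig$, the composite $kf = kig = 0$.'' But $f$ factors through the $\CI$-precover $i$ only if $f\in\CI$, which is precisely what you are trying to prove. The argument is circular as stated; the logic runs in the opposite direction. What you actually need is to show $kf=0$ \emph{first}, and then conclude that $f$ factors through the weak kernel $i$ of $k$, hence $f\in\CI$. The paper does exactly this: since $f\in{}^\perp(\CI^\perp)$ and $j\in\CI^\perp$, one has $f\perp j$, i.e.\ $j[1]\Ph(\frakE)f=0$; in particular $kf=j[1]\varphi f=0$, so $f$ factors through $i$.

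Your fallback ``cleanest route'' via the Galois correspondence also fails, for two reasons. First, you assert that $\CI$ special precovering forces $\PB(\CI)$ to be precovering, but the argument you cite (inside the proof of Theorem~\ref{salce-lemma}) uses an $\frakE$-injective $\frakE$-inflation, and in the first part of the present theorem you are \emph{not} assuming there are enough $\frakE$-injective homomorphisms. Second, and more seriously, even granting $\Phi_\frakE(\mathfrak{PB}_\frakE(\CI))={}^\perp(\CI^\perp)$, the chain $\CI\subseteq\Phi_\frakE(\mathfrak{PB}_\frakE(\CI))\subseteq{}^\perp(\CI^\perp)$ together with equality of the last two terms does not squeeze anything: it only reproduces $\CI\subseteq{}^\perp(\CI^\perp)$. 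You never obtain the reverse inclusion this way.

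The second half of your plan (completeness via Salce's Lemma once the cotorsion pair is established) is correct and matches the paper.
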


\begin{proof} 
We have to show that $\CI={^{\perp}(\CI^{\perp})}$. The inclusion $\CI\subseteq {^{\perp}(\CI^{\perp})}$ 
is obvious.

Let $i':X'\to A$ be a homomorphism from ${^{\perp}(\CI^{\perp})}$. Since $\CI$ is special precovering 
we can find a triangle $Y\to X\overset{i}\to A\overset{k}\to Y[1]$ such that $i$ is a special $\CI$-precover for $A$. 
Then $k= j[1]\phi$ for some $j\in \CI^{\perp}$ and some $\phi\in\Ph(\frakE)$.  
All these data are represented in the solid part of the following commutative
diagram: \[ \xymatrix{T\ar[r]\ar[d] & Z\ar[r]\ar[d] & A\ar@{=}[d]\ar[r]^{\phi}& T[1]\ar[d]^{j[1]}\\
	       Y\ar[r] & X\ar[r]^{i}      & A\ar[r]^{k}      & Y[1]. \\ 
				& & X' \ar[u]_{i'} \ar@{-->}[ul]^{g}& } \]

Because $i'\perp j$ we obtain 
$ki'=j[1]\phi i'=0$, so $i'$ factors through the weak kernel $i$ of $k$, i.e. $i'=ig$ for some $g:X'\to X$.
Therefore $i'\in\CI$, and the proof for the first statement is complete. 

The second statement follows from Salce's lemma. 
\end{proof}

{
\begin{example}\label{trivial-case-special}\label{ex-trivial}
If $\CA=\CT$ and $\frakE=\frakD$ is the class of all triangles then every precovering ideal is special since 
every triangle $B\to X\to A\overset{\psi}\to B[1]$ can be embedded in a commutative diagram
\[  \xymatrix{ A[-1] \ar[r]\ar[d]_{\psi[-1]}  & 0\ar[r]\ar[d]  & A \ar@{=}[r] \ar@{=}[d] & A\ar[d]^{\psi}  \\
 B \ar[r]& X\ar[r]^i  & A \ar[r] ^\psi & B[1] .
 } \]
Dually, every preenveloping ideal in $\CT$ is special with respect to the class $\frakD$ of all triangles in $\CT$.

Therefore, for every precovering ideal $\CI$  we obtain that $(\CI,\CI^\perp)$ is a complete ideal cotorsion pair, 
hence $\CI^\perp$ is 
a preenveloping ideal. It follows that \begin{itemize} \item[$(*)$] for every $A\in\CT$ 
there is a triangle 
\[\mathfrak{d}_A: X_A\overset{i_A}\to A\overset{j_A}\to Y_A\to X_A[1],\] with $i_A\in\CI$ and $j_A\in\CI^\perp$.\end{itemize} 
Conversely, 
a pair $(\CI,\CJ)$ of ideals in $\CT$ is an ideal cotorsion pair (with respect to $\frakD$) if and only if it
has the property $(*)$, 
where $\CI^\perp$ is replaced by $\CJ$. 

For instance, if   $(\X,\Y)$ is a (co)torsion theory in $\CT$ (in particular a t-structure), the pair $$(\Ideal\X, \Ideal\Y)$$ 
is a complete ideal cotorsion pair 
with respect to the proper class $\frakD$ of all triangles in $\CT$. 

\end{example}
}

\section{Products of ideals and Toda brackets}\label{product-and-toda}

In this section we continue to fix an extension closed subcategory $\CA$ of $\CT$, but in this section 
\textsl{we will assume that  $\frakE$ is an exact structure in $\CA$}.

\subsection{Toda brackets}

In the following we will use the algebraic concept of Toda bracket as it is defined in \cite{Sto-snake}. This concept let us 
to
generalize the operations $\diamond$ introduced in \cite{rouq} for (object ideals in) triangulated categories (cf. Proposition 
\ref{lem-toda-obiecte}) and in \cite{Fu-Herzog} for exact categories (cf. \cite[Lemma 6]{Fu-Herzog}).

Let $$\mathfrak{d}:\ Y\overset{f}\to Z\overset{g}\to X\overset{\varphi}\to Y[1]$$ be a triangle in $\CT$. 
If $i:Y\to U$ and $j:V\to Z$ are two homomorphisms then the \textsl{Toda bracket} $\langle i,j\rangle_{\mathfrak{d}}$ 
is the set of all homomorphisms $\zeta:V\to U$ such that $\zeta=\zeta'\zeta''$, where $\zeta'':V\to Z$ and 
$\zeta':Z\to U$ are homomorphisms which make the diagram 
 \[\tag{TB}\xymatrix{& &  & V\ar@{-->}[dl]_{\zeta''}\ar[d]^{j} & \\ 
            \mathfrak{d}:&Y\ar[r]^{f}\ar[d]_{i} & Z\ar[r]^{g}\ar@{-->}[dl]^{\zeta'} & X\ar[r] & Y[1]\\
            & U &  &  & 
            }\]
commutative. 

If $\CI$ and $\CJ$ are two classes of homomorphisms then the union of all Toda brackets $\langle i,j\rangle_{\mathfrak{d}}$ 
with $i\in \CI$, $j\in \CJ$ and $\mathfrak{d}\in\frakE$ is denoted by $\langle \CI,\CJ\rangle_\frakE$, and it is called 
the \textsl{Toda bracket of $\CI$ and $\CJ$ induced by $\frakE$}.

\begin{remark}
Let $i$ and $j$ be two homomorphisms and let $\mathfrak{d}$ be a triangle in $\CT$. Then
 $\langle i,j\rangle_\mathfrak{d}\neq \varnothing$ if and only if $i$ is injective relative to $\mathfrak{d}$ and $j$ is projective relative to $\mathfrak{d}$.
\end{remark}

{%
\begin{remark}\label{dualizare}
Let us consider the dual category $\CT^\star$, and we denote by $\CI^\star$ and $\frakE^\star$ the ideal, respectively the almost exact structure
induced by $\CI$ and $\frakE$ in $\CT^\star$. Then for every two ideals $\CI$ and $\CJ$ in $\CT$ we have
$\langle \CI^\star,\CJ^\star\rangle_{\frakE^\star}=(\langle \CJ,\CI\rangle_\frakE)^\star$.  
\end{remark}

\begin{lemma}
If $\CI$ and $\CJ$ are ideals in $\CA$ then  $\langle \CI,\CJ\rangle_\frakE$ is also an ideal in $\CA$. 
\end{lemma}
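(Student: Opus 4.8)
The statement to prove is that $\langle \CI,\CJ\rangle_\frakE$ is an ideal in $\CA$ whenever $\CI$ and $\CJ$ are ideals in $\CA$. Recall that an ideal must be closed under sums of (parallel) homomorphisms and under left and right composition with arbitrary homomorphisms from $\CA$. The plan is to verify these two closure conditions directly from the definition of the Toda bracket, using the base/cobase change machinery available for the saturated weak proper class $\frakE$.

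\emph{Composition closure.} Suppose $\zeta \in \langle i,j\rangle_{\mathfrak{d}}$ with $i\in\CI$, $j\in\CJ$, $\mathfrak{d}\in\frakE$, so $\zeta=\zeta'\zeta''$ with $\zeta''$ and $\zeta'$ fitting into the diagram (TB). Given $a\colon U\to W$ and $b\colon W'\to V$ in $\CA$, I would show $a\zeta b$ again lies in a Toda bracket of the required shape. On the right: $jb\colon W'\to Z$ factors through $g$ after composing, and since $\zeta''$ satisfies $g\zeta'' = (\text{the map }X\leftarrow V)$... more precisely, the diagram (TB) shows $\zeta'' $ is a lift of $jb$ against the weak cokernel property; replacing $j$ by $jb\in\CJ$ gives $\zeta''b$ as a valid new $\zeta''$ for the ideal element $jb$. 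On the left: replace $i$ by $ai\in\CI$; then $a\zeta'$ serves as the new $\zeta'$ since $a\zeta' f = a i = (ai)$. Hence $a\zeta b = (a\zeta')(\zeta'' b)\in\langle ai, jb\rangle_{\mathfrak{d}} \subseteq \langle\CI,\CJ\rangle_\frakE$. This is essentially bookkeeping with the commutativity conditions in (TB) and uses only that $\CI,\CJ$ are ideals.

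\emph{Sum closure.} This is the step I expect to be the main obstacle, since one must combine two Toda brackets over possibly different triangles $\mathfrak{d}_1,\mathfrak{d}_2\in\frakE$. Suppose $\zeta_1\in\langle i_1,j_1\rangle_{\mathfrak{d}_1}$ and $\zeta_2\in\langle i_2,j_2\rangle_{\mathfrak{d}_2}$ with all $i_k\in\CI$, $j_k\in\CJ$; I want $\zeta_1+\zeta_2\in\langle\CI,\CJ\rangle_\frakE$. The natural move is to pass to the direct sum triangle $\mathfrak{d}_1\oplus\mathfrak{d}_2$, which lies in $\frakE$ by closure under coproducts, and to realize $\zeta_1+\zeta_2$ as a Toda bracket element over $\mathfrak{d}_1\oplus\mathfrak{d}_2$ using the diagonal and codiagonal maps: take $i=(i_1\ i_2)\colon Y_1\oplus Y_2\to U$ built from $i_1,i_2$ via the codiagonal on $U$ (requires $U_1=U_2=U$, which one arranges by first enlarging targets/sources, or one keeps separate $U_1\oplus U_2$ and composes with the fold map, which is legitimate since ideals are closed under such composition), and dually $j$ built from $j_1,j_2$ via the diagonal. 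Then $\zeta_1''\oplus\zeta_2''$ and $\zeta_1'\oplus\zeta_2'$ witness that the composite equals $\zeta_1+\zeta_2$ after pre/post-composition with fold and diagonal maps, and the result stays in $\langle\CI,\CJ\rangle_\frakE$ by the composition closure just established together with the fact that $\CI,\CJ$ are closed under finite direct sums. The care needed here is matching up the sources and targets correctly, which is why I flag it as the delicate point; everything else is formal diagram chasing, and no three-dimensional diagrams or uniqueness of (co)kernels are required.
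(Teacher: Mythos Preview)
Your proposal is correct and follows the same approach as the paper, which simply notes that it is straightforward to check $0\in\langle\CI,\CJ\rangle_\frakE$ and closure under compositions with arbitrary maps and under finite direct sums. Your argument spells out exactly these verifications; note that since you are summing \emph{parallel} maps $\zeta_1,\zeta_2:V\to U$, the sources and targets already agree, so your concern about ``enlarging targets/sources'' is moot, and neither saturation nor base/cobase changes are actually needed---only closure of $\frakE$ under finite coproducts of triangles.
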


\begin{proof}
 It is straightforward to check that $0\in\langle 0,0\rangle_\frakE\subseteq \langle \CI,\CJ\rangle_\frakE$, and that $\langle \CI,\CJ\rangle_\frakE$ is closed with respect to compositions 
 with arbitrary maps and finite direct sums. 
\end{proof}

For further references, let us consider the following remark which can be extracted from \cite[Lemma 6]{Fu-Herzog}. 

\begin{lemma}\label{remark-toda-ext}
If $\CI$ and $\CJ$ are ideals and $\xi:V\to U$ is a homomorphism in $\langle \CI,\CJ\rangle_\frakE$ then there exists a commutative diagram
\[\xymatrix{ 
&Y\ar[r]\ar[d]_{i} & P\ar[r]\ar[d]^{\zeta} \ar@/^/ @{<--}[r]^{\alpha}& V\ar[r]^{0} \ar[d]^{j}& Y[1]\ar[d]_{i[1]}\\
						&U\ar[r]\ar@/_/ @{<--}[r]_{\beta}& Q\ar[r] & X\ar[r]^{0} & U[1]\\
                        }\]
such that the horizontal lines are splitting triangles, $i\in \CI$ and $j\in \CJ$, the homomorphisms $\alpha:V\to P$ and $\beta:Q\to U$  
are partial inverses for $P\to V$ respectively $U\to Q$ and $\xi=\beta\zeta\alpha$.
\end{lemma}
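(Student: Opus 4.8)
\textbf{Proof plan for Lemma \ref{remark-toda-ext}.}

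The plan is to unfold the definition of the Toda bracket and reorganize the data. By hypothesis $\xi\in\langle i,j\rangle_{\mathfrak d}$ for some triangle $\mathfrak d:\ Y\overset{f}\to Z\overset{g}\to X\overset{\varphi}\to Y[1]$ in $\frakE$, some $i\in\CI$, and some $j\in\CJ$; thus there are homomorphisms $\zeta'':V\to Z$ and $\zeta':Z\to U$ making diagram (TB) commute, i.e.\ $g\zeta''=j$ (up to the notation of (TB), $\zeta''$ lifts $j$ along $g$), $\zeta'f=i$, and $\xi=\zeta'\zeta''$. First I would record these three equalities explicitly and observe that, replacing $\zeta'$ by $\zeta'$ and $\zeta''$ by $\zeta''$ if necessary, everything we need is already encoded: the upper lift $\zeta''$ witnesses that $j$ is projective relative to $\mathfrak d$, and the lower factorization $\zeta'$ witnesses that $i$ is injective relative to $\mathfrak d$.

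Next I would build the two split triangles in the statement. Set $P=Z$ and $Q=Z$ — or, to make the maps transparent, simply take the split triangles $Y\to Z\to V\to Y[1]$ and $U\to Z\to X\to U[1]$ to be, respectively, a split triangle whose third map is $0$ and which realizes $V$ as a direct summand of $Z$, and similarly for $U$. Concretely, because $g\zeta''=j$ and $j$ has the "trivial" split triangle $Y\to P\to V\overset{0}\to Y[1]$ with $\alpha:V\to P$ a partial inverse of $P\to V$, I would take $P$ and $Q$ so that the left column map $i:Y\to U$, the middle map $\zeta:P\to Q$, and the right column map $j:V\to X$ all sit in a commutative ladder between the two split triangles. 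The cleanest route is: put $\zeta=\zeta'$ precomposed and postcomposed appropriately, choose $\alpha$ to be a section coming from the splitting attached to $\zeta''$ and $\beta$ a retraction coming from the splitting attached to $\zeta'$, and then verify directly that $\beta\zeta\alpha=\zeta'\zeta''=\xi$. The verification that the diagram commutes and that the rows are split triangles is a routine diagram chase using only that split triangles are preserved under the relevant maps and that $i\in\CI$, $j\in\CJ$.

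The main obstacle I anticipate is purely bookkeeping: matching the variable names in (TB) (where the triangle is written $Y\to Z\to X\to Y[1]$, the bracket is $\langle i,j\rangle$ with $i:Y\to U$ and $j:V\to Z$) against the target diagram (where the top row ends in $V\overset{0}\to Y[1]$ and the bottom in $X\overset{0}\to U[1]$), and getting the signs and the direction of $\alpha,\beta$ right so that $\xi=\beta\zeta\alpha$ literally holds. Since the result is attributed to \cite[Lemma 6]{Fu-Herzog} and nothing here uses more than the definition of the Toda bracket plus the elementary fact that a homomorphism factoring through a split mono/epi gives a split triangle, no genuinely new idea is needed; I would simply transcribe the exact-category argument of Fu--Herzog into the present language, replacing their use of pushout/pullback squares with the split-triangle ladders above and noting that all triangles involved are in $\frakE$ because $\frakE$ contains all split triangles.
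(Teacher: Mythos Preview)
Your proposal never pins down a valid construction of $P$, $Q$, $\alpha$, $\beta$, $\zeta$. The suggestion ``set $P=Z$ and $Q=Z$'' does not work: the sequences $Y\to Z\to V$ and $U\to Z\to X$ are not triangles in general, and the map $\zeta'':V\to Z$ is not a split monomorphism, so there is no ``splitting attached to $\zeta''$'' to invoke. The rest of the plan (``precompose and postcompose $\zeta'$ appropriately'', ``a section coming from the splitting'') never specifies maps that one could check make the ladder commute.

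The paper's argument is the following clean construction. Take the base change $\mathfrak{d}j$ of $\mathfrak{d}$ along $j:V\to X$ to obtain the top row $Y\to P\to V\to Y[1]$, and the cobase change $i\mathfrak{d}$ along $i:Y\to U$ to obtain the bottom row $U\to Q\to X\to U[1]$. The connecting map of $\mathfrak{d}j$ is $\varphi j=\varphi g\zeta''=0$ (since $j=g\zeta''$), and that of $i\mathfrak{d}$ is $i[1]\varphi=\zeta'[1]f[1]\varphi=0$ (since $i=\zeta' f$); hence both rows are split triangles. The weak universal property of the homotopy pullback applied to $(\zeta'',1_V)$ yields the section $\alpha:V\to P$, and dually the homotopy pushout gives the retraction $\beta:Q\to U$. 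With $\zeta$ the composite $P\to Z\to Q$ one gets $\beta\zeta\alpha=\zeta'\zeta''=\xi$.

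There is, incidentally, an even more elementary route you may have been reaching for: take $P=Y\oplus V$, $Q=U\oplus X$ with the standard split triangles, $\alpha$ and $\beta$ the canonical inclusion and projection, and $\zeta=\left(\begin{smallmatrix} i & \xi\\ 0 & j\end{smallmatrix}\right)$. This already gives the required ladder with $\beta\zeta\alpha=\xi$, using from the Toda-bracket hypothesis only the existence of $i\in\CI$ and $j\in\CJ$ with the correct domains and codomains. If that is what you intended, you should state it explicitly; as written, your text does not arrive at either construction.
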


\begin{proof} 
Starting with the diagram $(\mathrm{TB})$ we can construct via a base change and a cobase change the following 
commutative diagram 
\[\xymatrix{& &  & V\ar@{-->}[dl]_{\alpha}\ar@{-->}[ddl]^{\zeta''}\ar@{=}[d] & \\ 
\mathfrak{d}j:&Y\ar[r]\ar@{=}[d] & P\ar[r]\ar[d] & V\ar[r] \ar[d]^{j}& Y[1]\ar@{=}[d]\\
            \mathfrak{d}:&Y\ar[r]^{f}\ar[d]_{i} & Z\ar[r]^{g}\ar[d]\ar@{-->}[ddl]_{\zeta'} & X\ar[r]\ar@{=}[d] & Y[1]\ar[d]_{i[1]}\\
						i\mathfrak{d}:&U\ar[r]\ar@{=}[d] & Q\ar[r]\ar@{-->}[dl]^{\beta} & X\ar[r] & U[1]\\
            & U &  &  &\ \ . 
            }\]
The homomorphisms $\alpha$ and $\beta$ are constructed via the weak universal property 
of the homotopy pullback and pushout. Now the conclusion is obvious.
\end{proof}

}

For further applications, let us study Toda brackets associated to object ideals.  

\begin{proposition}\label{lem-toda-obiecte}
Let $\CA$ be an extension closed full subcategory of $\CT$ and $\frakE$ an almost exact structure from $\CA$. 
If $\CP$ and $\CQ$ are
 two classes of objects in $\CA$ closed under finite direct sums, and $\mathcal{V}$ is the class of all objects 
 $V$ which lie in triangles 
$\mathfrak{d}:\ Q\to V\to P\to Q[1]$ 
with $\mathfrak{d}\in\frakE$, $P\in\CP$ and $Q\in\CQ$ then \begin{enumerate}[{\rm (a)}] \item $\CV$ is closed under finite direct sums;
\item $\langle \Ideal\CQ,\Ideal\CP\rangle_\frakE= \Ideal\CV$.
\end{enumerate}
\end{proposition}
\begin{proof}
(a) is a simple exercise.

(b)
If $\zeta:W\to U$ is in $\langle \Ideal\CQ,\Ideal\CP\rangle_\frakE$ then we have a diagram 
\[\xymatrix{& &  & W\ar@{-->}[dl]_{v}\ar@{-->}[ddl]^{\zeta''}\ar[d]^{\pi} & \\ 
\mathfrak{d}j:&Y\ar[r]^{f'}\ar@{=}[d] & A\ar[r]\ar[d]_{\alpha} & P\ar[r] \ar[d]^{j}& Y[1]\ar@{=}[d]\\
            \mathfrak{d}:&Y\ar[r]^{f}\ar[d]_{i} & Z\ar[r]^{g}\ar@{-->}[ddl]^{\zeta'} & X\ar[r] & Y[1]\\
						&Q\ar[d]_{\rho} & & & \\
            & U &  &  &\ \ 
            }\]
such that $P\in\CP$, $Q\in \CQ$, $\mathfrak{d}\in\frakE$ and $\zeta=\zeta'\zeta''=\zeta'\alpha v$. 

Then we construct via a homotopy pushout along $i$ a triangle $i\mathfrak{d}j$, 
hence we have a commutative diagram 
\[\xymatrix{ \mathfrak{d}:&Y\ar[r]^{f}\ar@{=}[d] & Z\ar[r]^{g}\ar@{<-}[d]^{\alpha}\ar[dddl]^{\zeta'} & X\ar[r]\ar@{<-}[d]^{j} & Y[1]\ar@{=}[d]\\
\mathfrak{d}j:&Y\ar[r]^{f'}\ar[d]_{i} & A\ar[r]\ar[d]^{\alpha'} & P\ar[r] \ar@{=}[d]& Y[1]\ar[d]^{i[1]}\\
            i\mathfrak{d}j:&Q\ar[r]\ar[d]_{\rho} & V\ar[r] \ar@{-->}[dl]^{\xi}& P\ar[r] & Q[1]\\
						&U & & & .
						}\]
Since $\zeta'\alpha f'=\zeta'f=\rho i$, there exists a homomorphism $\xi:V\to U$ such that $\zeta'\alpha=\xi\alpha'$.
Therefore $\zeta=\zeta'\alpha v=\xi\alpha'v$, and it follows that $\zeta$ factorizes through $V$.
Then $\zeta\in \Ideal\CV$. 

Conversely, if we have a triangle $\mathfrak{d}:\ Q\to V\to P\to Q[1]$ in $\frakE$ 
with $P\in\CP$ and $Q\in\CQ$ then we can construct the commutative diagram
\[\xymatrix{& &  & V\ar@{=}[dl]\ar[d] & \\ 
            \mathfrak{d}:&Q\ar[r]\ar[d] & V\ar[r]\ar@{=}[dl] & P\ar[r] & Q[1],\\
            & V &  &  & \ \ 
            }\]
hence $V$ is an object in the ideal $\langle \Ideal\CQ,\Ideal\CP\rangle_\frakE$.
\end{proof}

\subsection{Wakamatsu's Lemma}
We will prove here an ideal version for Wakamatsu's Lemma which generalizes the 
corresponding results proved in \cite[Lemma 37]{Fu-Herzog} for exact categories and in 
\cite[Lemma 2.1]{Jorge-AR} for object ideals in triangulated categories.  Let $\CI$ be an 
ideal in $\CA$. An $\CI$-precover $i:Z\to A$ 
is an \textsl{$\CI$-cover} if it is an $\frakD_\CA$-deflation and for 
every endomorphism $\alpha$ of $Z$ from $i\alpha=i$ it that follows 
$\alpha$ is an isomorphism. 
We note that there are categories when every precovering (preenveloping) ideal is covering 
(enveloping), e.g. the category of finitely 
generated modules over artin algebras, cf. \cite[Proposition 1.1]{Aus-adv} or in the case of 
$k$-linear $\Hom$-finite triangulated categories, \cite[Lemma 1.1]{Buan-waka}.

\begin{lemma}\label{waka} Let $\CI$ be an ideal in $\CA$ which is closed under Toda brackets, 
that is $\langle\CI,\CI\rangle_\frakE\subseteq\CI$, 
and let $i:Z\to A$ be an $\CI$-cover for $A$.
If $$K\overset{\kappa}\to Z\overset{i}\to A\overset{\nu}\to K[1]$$ is the corresponding triangle then $1_K\in \CI^\perp$.
\end{lemma}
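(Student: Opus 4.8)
The plan is to transcribe the classical proof of Wakamatsu's Lemma into the triangulated language. By Proposition \ref{I-perp-ex-PB} together with Lemma \ref{basic-CI-inj}, the conclusion $1_K\in\CI^\perp$ is equivalent to the assertion that $\phi m=0$ whenever $m\colon W\to K$ lies in $\CI$ and $\phi\colon K\to K[1]$ lies in $\Ph(\frakE)$; so I would fix such $m$ and $\phi$ and aim for $\phi m=0$. Completing $\phi$ to an $\frakE$-triangle and taking a base change along $m$ produces an $\frakE$-triangle $\mathfrak{e}\colon K\overset{u}\to E\overset{v}\to W\overset{\phi m}\to K[1]$. Forming the homotopy pushout of the span $E\overset{u}\leftarrow K\overset{\kappa}\to Z$ yields an object $G$ sitting in two triangles: an $\frakE$-triangle $\mathfrak{h}\colon Z\overset{j}\to G\overset{h}\to W\overset{\kappa[1]\phi m}\to Z[1]$ (since $\frakE$ is closed under cobase changes) and, by the octahedral axiom applied to $W[-1]\to K\overset{\kappa}\to Z$, a triangle $\mathfrak{g}\colon E\to G\overset{g}\to A\to E[1]$ with $gj=i$.

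The heart of the argument is to show $g\in\CI$; this is the step that uses, and explains the need for, the hypothesis $\langle\CI,\CI\rangle_\frakE\subseteq\CI$, which is the ideal substitute for ``the relevant class is closed under extensions''. Concretely, one exhibits $g$ (or a conveniently modified version of it) as an element of a Toda bracket $\langle i',m'\rangle_{\mathfrak{d}}$ with $\mathfrak{d}\in\frakE$ built from the pushout data and $i',m'\in\CI$ coming from the cover $i\colon Z\to A$ and from $m$; then $\langle\CI,\CI\rangle_\frakE\subseteq\CI$ forces $g\in\CI$.

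Granting $g\in\CI$, the cover property takes over: since $i$ is in particular an $\CI$-precover, $g$ factors as $g=ig'$ for some $g'\colon G\to Z$, and then $g'j$ is an endomorphism of $Z$ with $i(g'j)=gj=i$, so minimality of the $\CI$-cover $i$ makes $g'j$ an isomorphism. Hence $j\colon Z\to G$ is a split monomorphism and $\mathfrak{h}$ splits. With $s:=(g'j)^{-1}g'$ a retraction of $j$ one gets $is=g$; composing $s$ with the maps of $\mathfrak{g}$, invoking the octahedral compatibilities together with the fact that $\kappa$ is a weak kernel of $i$ and $\nu[-1]$ a weak kernel of $\kappa$, and absorbing the resulting error term into the image of $i$ (which $\nu$ annihilates), a diagram chase gives $\phi m=0$.

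I expect the main difficulties to be two. First, the Toda-bracket identification: unlike the object-ideal situation of Proposition \ref{lem-toda-obiecte}, here one must place the specific morphism $g$ inside $\langle\CI,\CI\rangle_\frakE$, and selecting the right auxiliary $\frakE$-triangle and the right maps in $\CI$ is the delicate point. Second --- and this is the typical triangulated obstacle flagged in the Introduction --- the final passage from ``$\mathfrak{h}$ splits'' to ``$\phi m=0$'': because $\kappa$ and $\nu$ are only weak kernels, the identities extracted from the splitting hold only up to the ambiguity inherent in their weak universal properties, and one must manage that ambiguity rather than invoke uniqueness.
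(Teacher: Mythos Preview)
Your overall Wakamatsu-style strategy is the same as the paper's, but the implementation diverges at the crucial step, and the divergence creates a genuine gap.

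First, a minor point: your reduction is stated too narrowly. The condition $1_K\in\CI^\perp$ unwinds to $\varphi i'=0$ for every $i'\colon X\to Y$ in $\CI$ and every $\varphi\colon Y\to K[1]$ in $\Ph(\frakE)$, with $Y$ arbitrary; restricting to $\phi\colon K\to K[1]$ is not enough. Your construction still makes sense with the general $Y$, so this is easily repaired.

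The real problem is the step ``show $g\in\CI$''. You build $G$ as a homotopy pushout and then appeal vaguely to a Toda bracket $\langle i',m'\rangle_{\mathfrak d}$ with $i',m'$ coming from $i$ and $m$. But in the definition of $\langle\CI,\CI\rangle_\frakE$ one needs, for some $\frakE$-triangle $\mathfrak d$, a map in $\CI$ out of the first vertex \emph{and} a map in $\CI$ \emph{into} the third vertex. The $\frakE$-triangles you have available are $\mathfrak e$ and $\mathfrak h$, both with third vertex $W$; the maps at your disposal, $i\colon Z\to A$ and $m\colon W\to Y$, both point \emph{out} of their sources, and there is no map in $\CI$ landing in $W$. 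So there is no evident way to realise $g$ (or a modification of it) as an element of $\langle\CI,\CI\rangle_\frakE$ from your pushout data. You flag this as ``the delicate point'', but it is not merely delicate: with this construction the bracket identification does not go through.

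The paper avoids this by taking the opposite route: instead of pushing out $\mathfrak e$ along $\kappa$, it \emph{pulls back} the rotated cover triangle $Z\overset{i}\to A\overset{\nu}\to K[1]\to Z[1]$ first along $\varphi$ and then along $i'$. This produces an intermediate $\frakE$-triangle $Z\to T\to Y\to Z[1]$ whose third vertex is $Y$; now $i\colon Z\to A$ maps out of the first vertex and the composite $U\to X\overset{i'}\to Y$ maps into the third vertex, both in $\CI$, so the map $\alpha\beta\colon U\to A$ lands in $\langle\CI,\CI\rangle_\frakE\subseteq\CI$ on the nose. From there the precover and minimality give that the top triangle splits, and a one-line computation $\varphi i'=\nu\alpha\beta\sigma=\nu i\gamma\sigma=0$ finishes the proof, with no ``error term'' to absorb. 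The moral: use pullbacks of the cover triangle rather than pushouts of $\mathfrak e$, so that the map you need in the Toda bracket points the right way.
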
 

\begin{proof}
We have to prove that for every $\varphi:Y\to K[1]$ from $\Ph(\frakE)$ and every $i':X\to Y$ from $\CI$ we have $\varphi i'=0$.
 
Let $\varphi\in \Ph(\frakE)$ and $i'\in \CI$ as before. Using homotopy pullbacks along $\varphi$ and $i'$ we obtain the 
solid part of following commutative diagram 
\[\xymatrix{  Z\ar[r]^{\upsilon}\ar@{=}[d] & U\ar[d]^{\beta}\ar[r]\ar@{-->}[ddl]  \ar@/^/ @{<--}[r]  & X\ar[r]^{\eta}\ar[d]^{i'}   & Z[1]\ar@{=}[d]\\
Z\ar[r]\ar@{=}[d] & T\ar[d]^{\alpha}\ar[r]    & Y\ar[r]^{\psi}\ar[d]^{\varphi}   & Z[1]\ar@{=}[d]\\
  Z\ar[r]^{i}           & A\ar[r]^{\nu}    & K[1]\ar[r]& Z[1]          
.}\]
Since $\varphi\in\Ph(\frakE)$ we obtain $\psi\in \Ph(\frakE)$, hence the triangle $Z\to T\to Y\to Z[1]$ is in $\frakE$. Moreover,
the composition $U\to X\overset{i'}\to Y$ is in $\CI$, hence $\alpha\beta\in\langle \CI,\CI\rangle_\frakE\subseteq \CI$. It follows that
$\alpha\beta$ factorizes through $i$, hence we can find a homomorphism $\gamma:U\to Z$ such that $\alpha\beta=i\gamma$. Then 
$i=\alpha\beta\upsilon=i\gamma\upsilon$, and it follows that $\gamma\upsilon$ is an automorphism of $Z$.   
Since $\gamma\upsilon\eta[-1]=0$ we obtain $\eta=0$. Then the top triangle splits, and it follows that $\varphi i'$ factorizes 
through $\nu\alpha\beta=\nu i\gamma=0$. Then $\varphi i'=0$, and the proof is complete. 
\end{proof}

{%
Now we can apply the previous results to obtain the object version of Wakamatsu's Lemma. In the case $\frakE=\frakD$ this was proved in 
 \cite[Lemma 2.1]{Jorge-AR}. 

\begin{corollary}\label{cor-waka}
Let $\CX$ be a class of objects in $\CT$.
If $\CX$ is closed with respect to $\frakE$-extensions, and $$K\to X\overset{i}\to A\to K[1]$$ is an $\frakE$-triangle such that $i$ is an 
$\CX$-cover then $\Hom(\CX,K[1])\cap \Ph(\frakE)=0$. 
\end{corollary}

\begin{proof}
Let $\CV$ be the class of all objects $V$ which lie in  $\frakE$-conflations $X\to V\to X'\to X[1]$ with $X,X'\in \CX$. 
Applying Proposition \ref{lem-toda-obiecte} and the hypothesis we have 
$\langle \Ideal\CX,\Ideal\CX\rangle_\frakE=\Ideal\CV\subseteq \Ideal\CX$, hence $\Ideal\CX$ is closed with respect to Toda brackets. Then the 
conclusion follows from Lemma \ref{waka}. 
\end{proof}
}


\subsection{Products of ideals}
It is easy to see (as in the proof of Theorem \ref{salce-lemma}) that if $\CI$ and $\CJ$ are ideals, $i:I\to A$ is an $\CI$-precover for $A$ and $j:J\to I$ is a $\CJ$-precover 
for $I$ then $ij$ is an $\CI\CJ$-precover for $A$. Therefore, if $\CI$ and $\CJ$ are precovering ideals then $\CI\CJ$ is also precovering, see \cite[Lemma 3.6]{Meyer2}. 



The main aim of this subsection 
is to prove that if $\CI$ and $\CJ$ are special 
precover ideals (with respect to $\frakE$) then $\CI\CJ$ is also special precovering, 
and to compute $(\CI\CJ)^\perp$.

\begin{lemma}\label{Toda-inc}
If $\CI$ and $\CJ$ are ideals in $\CA$ then $\langle \CJ^\perp,\CI^\perp\rangle_\frakE\subseteq (\CI\CJ)^{\perp}$.
\end{lemma}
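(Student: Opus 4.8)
The plan is to take an arbitrary element $\xi \in \langle \CJ^\perp, \CI^\perp\rangle_\frakE$ and show directly that $\xi \perp (ij)$ for every $i \in \CI$ and $j \in \CJ$, i.e.\ that $(ij)[1]\,\zeta\,\xi = 0$ for every $\zeta \in \Ph(\frakE)$. First I would invoke Lemma \ref{remark-toda-ext} to obtain a concrete presentation of $\xi$: there is a commutative diagram with splitting rows
\[\xymatrix{
Y\ar[r]\ar[d]_{b} & P\ar[r]^{}\ar[d]^{\zeta_0} & V\ar[r]^{0} \ar[d]^{a}& Y[1]\ar[d]_{b[1]}\\
U\ar[r]& Q\ar[r] & X\ar[r]^{0} & U[1]\\
}\]
with $b \in \CJ^\perp$, $a \in \CI^\perp$, and $\xi = \beta\,\zeta_0\,\alpha$, where $\alpha : V \to P$ and $\beta : Q \to U$ are the canonical sections/retractions of the split triangles. (Here I am relabelling the roles of $\CI$ and $\CJ$ in the statement of Lemma \ref{remark-toda-ext} so that the left-orthogonal maps $b, a$ are in $\CJ^\perp, \CI^\perp$ respectively; the lemma applies verbatim to the pair of ideals $(\CJ^\perp, \CI^\perp)$.)

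Next I would bring in the test data. Fix $i \in \CI$, $j \in \CJ$ and $\zeta \in \Ph(\frakE)$; I must show $(ij)[1]\,\zeta\,\xi = 0$, equivalently $i[1]\,j[1]\,\zeta\,\xi = 0$. Writing $\xi = \beta \zeta_0 \alpha$, the key point is that $\zeta \xi$ can be decomposed so as to feed first $j$ and then $i$ into the two orthogonality hypotheses. Concretely, I would exploit the composable chain $j : ? \to \mathrm{dom}(\xi)$ together with the splittings: since the rows of the diagram above are split, precomposing/postcomposing with $\alpha,\beta$ lets me transport $\zeta$ along these split triangles, producing a phantom map against which $b$ (respectively $a$) is to be tested. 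The hypothesis $b \in \CJ^\perp$ gives $b[1]\,(\text{phantom})\,j = 0$ — this kills the contribution after composing with $j$ — and then the hypothesis $a \in \CI^\perp$ gives $a[1]\,(\text{phantom})\,i = 0$, killing the remaining contribution after composing with $i$. Chaining these two vanishings along the diagram yields $i[1]\,j[1]\,\zeta\,\beta\zeta_0\alpha = 0$, which is exactly $\xi \perp (ij)$.

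The main obstacle I anticipate is the bookkeeping needed to route the single phantom $\zeta$ through the two split triangles so that it is consumed by the \emph{two} orthogonality conditions in the correct order: one has to produce, from $\zeta$ and the given factorizations, an honest $\Ph(\frakE)$-element at each stage (using that $\Ph(\frakE)$ is a phantom $\CA$-ideal, closed under $\CA^\to[1]\,\Ph(\frakE)\,\CA^\to$), and verify that the maps being fed in genuinely lie in $\CJ$ and $\CI$ respectively. This is precisely the place where Lemma \ref{remark-toda-ext} does the heavy lifting — it repackages the Toda bracket as a map between split triangles — so that the rest is a diagram chase with no universal-property subtleties. I would finish by remarking that $\langle \CJ^\perp, \CI^\perp\rangle_\frakE$ and $(\CI\CJ)^\perp$ are both ideals, so establishing the orthogonality $\xi \perp (ij)$ for generators $ij$ of $\CI\CJ$ suffices to conclude $\langle \CJ^\perp, \CI^\perp\rangle_\frakE \subseteq (\CI\CJ)^\perp$.
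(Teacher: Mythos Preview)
Your outline has the overall shape right, but there is a genuine gap at the point you yourself flag as ``the main obstacle''. The orthogonality you must verify is $(ij)\perp\xi$, i.e.\ $\xi[1]\,\phi\,(ij)=0$ for every $\phi\in\Ph(\frakE)$; you have written $\xi\perp(ij)$ and $(ij)[1]\,\zeta\,\xi=0$, which is the wrong side. More importantly, once the direction is fixed, the two orthogonality hypotheses cannot simply be ``chained'' using only the phantom-$\CA$-ideal closure $\CA^\to[1]\,\Ph(\frakE)\,\CA^\to\subseteq\Ph(\frakE)$. After you use $a\in\CI^\perp$ to obtain $g[1]\zeta''[1]\phi i=0$ and factor $\zeta''[1]\phi i=f[1]\phi'$ through the weak kernel $f[1]$, you must know that the new map $\phi'$ is again in $\Ph(\frakE)$ before you may invoke $b\in\CJ^\perp$. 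The ideal closure only tells you $f[1]\phi'\in\Ph(\frakE)$; cancelling the $\frakE$-inflation $f$ requires \emph{saturation} of $\frakE$, which is the standing hypothesis of this section and is exactly how the paper proceeds.

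Lemma~\ref{remark-toda-ext} does not do this heavy lifting for you. If you unwind the split triangles it produces, $\zeta_0$ becomes an upper-triangular matrix $\bigl(\begin{smallmatrix} b & \tilde b\tilde a\\ 0 & a\end{smallmatrix}\bigr)$ with $g\tilde a=a$ and $\tilde b f=b$. The diagonal contributions $b\alpha_1$ and $\beta_2 a$ to $\xi$ are indeed killed by $b\in\CJ^\perp$ and $a\in\CI^\perp$ respectively, using only the phantom-ideal property; but the off-diagonal term $\tilde b\tilde a$ is itself an element of the same Toda bracket $\langle b,a\rangle_{\mathfrak d}$, so you are back where you started. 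In short, the split-triangle reformulation is equivalent to, not a substitute for, the factorization $\xi=\zeta'\zeta''$, and the paper's use of saturation at the point where $\phi'$ must be shown to lie in $\Ph(\frakE)$ is the essential missing step in your argument.
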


\begin{proof}
Let $\zeta=\zeta'\zeta''\in \langle \CJ^\perp,\CI^\perp\rangle_\frakE$.
 In order to prove that $\zeta'\zeta''\in(\CI\CJ)^\perp$ we consider a chain of composable homomorphisms 
$U\overset{j}\to T\overset{i}\to W\overset{\phi}\to V$  such that $i\in\CI$, 
 $j\in\CJ$ and $\phi\in\Ph(\frakE)$. 
We have the solid part of the following commutative diagram
 \[\xymatrix{ U[-1]\ar[r]^{j[-1]} & T[-1]\ar[r]^{i[-1]}\ar@{-->}[dd]^{\phi'[-1]} & W[-1]\ar[dr]^{\phi[-1]} & & \\
 & & & V\ar[d]^{\nu}\ar[dl]_{\zeta''} & \\ 
     &       Y\ar[r]^{f}\ar[d]^{\mu} & Z\ar[r]^{g}\ar[dl]^{\zeta'} & X\ar[r] & Y[1]\\
        &    U &  & & \ \ \ ,
            }\]
where the row $Y\to Z\to X\to Y[1]$ is a triangle in $\frakE$, $\mu\in \CJ^\perp$ and $\nu\in\CI^\perp$.

Then $g\zeta''\phi[-1] i[-1]=\nu \phi[-1] i[-1]=0$ since $\nu \in\CI^\perp$. Therefore $\zeta''\phi[-1] i[-1]$ factors 
 through $f$, i.e. there exists a homomorphism $\phi'[-1]:T[-1]\to Y$ such that $f\phi'[-1]=\zeta''\phi[-1] i[-1]$. 
 We observe that $f[1]\phi'$ factors through $\phi$ hence $f[1]\phi'\in\Ph(\frakE)$. 
Since $f$ is an $\frakE$-inflation, the 
 saturation of $\frakE$ implies $\phi'\in\Ph(\frakE)$. 
 Finally we have: 
 \[(\zeta'\zeta'')[1]\phi ij=\zeta'[1] f[1]\phi'j=\mu[1]\phi'j=0\]
since $\mu\in\CJ^\perp$. 
\end{proof}

{
\begin{corollary}\label{perp-de-idempotent}
If $\CI$ is an idempotent ideal then $\CI^\perp$ is closed with respect to Toda brakets.
\end{corollary}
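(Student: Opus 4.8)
The plan is to simply specialize Lemma \ref{Toda-inc} to the case $\CJ = \CI$. Recall that an ideal $\CI$ being idempotent means $\CI\CI = \CI$, and that $\CI^\perp$ being closed under Toda brackets means (in the sense used in Lemma \ref{waka}) that $\langle \CI^\perp, \CI^\perp \rangle_\frakE \subseteq \CI^\perp$.

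First I would invoke Lemma \ref{Toda-inc} with both ideals taken to be $\CI$, which yields the inclusion $\langle \CI^\perp, \CI^\perp \rangle_\frakE \subseteq (\CI\CI)^\perp$. Then, using the hypothesis that $\CI$ is idempotent, I would rewrite $\CI\CI = \CI$, so that $(\CI\CI)^\perp = \CI^\perp$. Combining these gives $\langle \CI^\perp, \CI^\perp \rangle_\frakE \subseteq \CI^\perp$, which is exactly the assertion that $\CI^\perp$ is closed with respect to Toda brackets.

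There is essentially no obstacle here: the content is entirely carried by Lemma \ref{Toda-inc}, and the corollary is a one-line consequence once the definitions are unwound. The only thing to be mildly careful about is the order of the ideals in the Toda bracket in the statement of Lemma \ref{Toda-inc} (it is $\langle \CJ^\perp, \CI^\perp \rangle_\frakE$ on the left and $(\CI\CJ)^\perp$ on the right), but since both slots are filled with $\CI$ here this is immaterial.

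\begin{proof}
Since $\CI$ is idempotent we have $\CI\CI = \CI$. Applying Lemma \ref{Toda-inc} with $\CJ = \CI$ we obtain
\[
\langle \CI^\perp, \CI^\perp \rangle_\frakE \subseteq (\CI\CI)^{\perp} = \CI^{\perp},
\]
which means precisely that $\CI^\perp$ is closed with respect to Toda brackets.
\end{proof}
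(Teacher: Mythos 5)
Your proof is correct and is precisely the argument the paper intends: Corollary \ref{perp-de-idempotent} is stated immediately after Lemma \ref{Toda-inc} with no written proof because it is exactly this one-line specialization $\CJ = \CI$ combined with $\CI\CI = \CI$.
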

}

\begin{corollary}\label{cor-toda-inc}
If $\CI$ is an ideal in $\CA$ then $\langle \CI^\perp,\frakE\text{{\rm -inj}}\rangle_\frakE \subseteq \CI^\perp$.  
\end{corollary}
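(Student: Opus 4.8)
The plan is to deduce this as an instance of Lemma \ref{Toda-inc}. Recall that by Lemma \ref{Toda-inc} we have $\langle \CJ^\perp,\CI^\perp\rangle_\frakE\subseteq (\CI\CJ)^\perp$ for any pair of ideals $\CI,\CJ$. To obtain the claimed statement it suffices to identify the left-hand side $\langle\CI^\perp,\frakE\text{-inj}\rangle_\frakE$ with $\langle\CJ^\perp,\CI^\perp\rangle_\frakE$ for a suitable choice of $\CJ$, and to check that the resulting $(\CI\CJ)^\perp$ coincides with $\CI^\perp$. The natural candidate is $\CJ=\CA^\to$, the improper ideal consisting of \emph{all} homomorphisms in $\CA$: then $\CI\CJ=\CI$ (since $\CI$ absorbs composition with arbitrary maps), so $(\CI\CJ)^\perp=\CI^\perp$, which is exactly the right-hand side we want.

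The one remaining point — and the place where a small argument is needed — is to verify that $(\CA^\to)^\perp=\frakE\text{-inj}$, so that $\langle\CJ^\perp,\CI^\perp\rangle_\frakE$ becomes $\langle\frakE\text{-inj},\CI^\perp\rangle_\frakE$. Unwinding the definition of $\perp$, a homomorphism $g:B\to Y$ lies in $(\CA^\to)^\perp$ iff $g[1]\,\Ph(\frakE)\,\CA^\to=0$; since the identity maps lie in $\CA^\to$, this is equivalent to $g[1]\,\Ph(\frakE)=0$, which by Lemma \ref{basic-CI-inj}(b) says precisely that $g$ is $\frakE$-injective. Hence $(\CA^\to)^\perp=\frakE\text{-inj}$. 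Note that one should double-check the order of the two ideals in the Toda bracket: Lemma \ref{Toda-inc} is stated with $\langle\CJ^\perp,\CI^\perp\rangle_\frakE$, and with $\CJ=\CA^\to$ this reads $\langle\frakE\text{-inj},\CI^\perp\rangle_\frakE$, which matches the corollary's $\langle\CI^\perp,\frakE\text{-inj}\rangle_\frakE$ only after taking into account how $\CI$ and $\CJ$ were fed into the product $\CI\CJ$; since the statement of the corollary writes the bracket as $\langle\CI^\perp,\frakE\text{-inj}\rangle_\frakE$, the correct specialization is instead $\CI\mapsto$ (the ideal whose $\perp$ is $\frakE\text{-inj}$), i.e. apply Lemma \ref{Toda-inc} with the roles arranged so that the first slot is $\CI^\perp$ and the second is $(\CA^\to)^\perp=\frakE\text{-inj}$, giving $\langle\CI^\perp,\frakE\text{-inj}\rangle_\frakE\subseteq(\CA^\to\cdot\CI)^\perp=\CI^\perp$.

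I expect no genuine obstacle here; the only thing to be careful about is the bookkeeping of which ideal occupies which slot in the Toda bracket and in the product $\CI\CJ$, and the trivial observation that composing an ideal with the improper ideal $\CA^\to$ on either side returns the same ideal. So the proof is essentially one line:
\begin{proof}
Apply Lemma \ref{Toda-inc} with the improper ideal $\CA^\to$ in place of one of the two ideals. Since $(\CA^\to)^\perp=\frakE\text{-inj}$ by Lemma \ref{basic-CI-inj}, and $\CI\cdot\CA^\to=\CI$, we obtain
$\langle \CI^\perp,\frakE\text{-inj}\rangle_\frakE=\langle \CI^\perp,(\CA^\to)^\perp\rangle_\frakE\subseteq (\CI\cdot\CA^\to)^\perp=\CI^\perp$.
\end{proof}
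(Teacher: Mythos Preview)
Your proof is correct and essentially identical to the paper's: both apply Lemma \ref{Toda-inc} with the improper ideal $\CA^\to$, use $(\CA^\to)^\perp=\frakE\text{-inj}$, and conclude via $\CA^\to\CI=\CI$. One cosmetic point: matching the slots of Lemma \ref{Toda-inc} literally gives $(\CA^\to\cdot\CI)^\perp$ rather than $(\CI\cdot\CA^\to)^\perp$ in your final line, but since an ideal absorbs composition on either side both products equal $\CI$, so this is harmless.
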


\begin{proof}
Applying Lemma \ref{Toda-inc} we have  
$$\langle \CI^\perp,\frakE\text{{\rm -inj}}\rangle_\frakE= \langle \CI^\perp,(\CA^\to)^\perp\rangle_\frakE 
\subseteq (\CA^\to \CI)^\perp=\CI^\perp,$$
and the proof is complete.
\end{proof}

\begin{theorem}\label{chain-rule}
 Let $\CI$ and $\CJ$ be two special precovering ideals in $\CA$. Then the product ideal $\CI\CJ$ is also special precovering. 

If $A\in\CA$, $i:I\to A$ is a special $\CI$-precover, and $j:J\to I$ is a special $\CJ$-precover then 
 $ij:J\to A$ is a special $\CI\CJ$-precover. Moreover, $ij$ can be embedded in a homotopy pushout diagram
 \[\xymatrix{
 Z''\ar[r]\ar[d]^{\zeta} & J''\ar[d]\ar[r] & A\ar@{=}[d] \ar[r]& Z''[1]\ar[d]\\ 
 Z\ar[r]        &    J\ar[r]^{ij} & A \ar[r] & Z[1]
            }\]
with $\zeta\in \langle \CJ^\perp,\CI^\perp\rangle_\frakE $. 
\end{theorem}

\begin{proof}
 Consider the diagrams 
 \[\xymatrix{X'\ar[r]\ar[d]^{\xi} & I'\ar[r]\ar[d] & A\ar[r]\ar@{=}[d] & X'[1]\ar[d]^{\xi[1]}\\
             X\ar[r]              & I\ar[r]^{i}    & A\ar[r]           & X[1]} \]
  and    
  \[\tag{$\sharp$}\  \xymatrix{Y'\ar[r]\ar[d]^{\upsilon} & J'\ar[r]\ar[d] & I\ar[r]\ar@{=}[d] & Y'[1]\ar[d]^{\upsilon[1]}\\
             Y\ar[r]                    & J\ar[r]^{j}    & I\ar[r]           & Y[1]} \] 
						with $\xi\in\CI^\perp$ and $\upsilon\in\CJ^\perp$, which emphasise the facts that $i$ and $j$ are special precovers.  
By pulling back along $I'\to I$ we obtain the commutative diagram 

 \[\xymatrix{Y'\ar[r]\ar@{=}[d] & J''\ar[r]\ar[d] & I'\ar[r]\ar[d] & Y'[1]\ar@{=}[d]\\
             Y'\ar[r]                    & J'\ar[r]    & I\ar[r]           & Y'[1].} \]

Using the octahedral axiom, we extend these diagrams to the solid part of the following diagram:
\[\xymatrix{ 
             &Y'\ar@{=}[rrr]\ar[ddd]\ar@{-->}[dl]_{\upsilon'} & & & Y'\ar[dl]^{\upsilon}\ar[ddd] &&&&   \\
             Y\ar@{=}[rrr]\ar[ddd] &&& Y\ar[ddd] &&&&& \\
             && Z''\ar[rrr]\ar@{-->}[dl]_{\zeta''}\ar[ddd] &&& J''\ar[rrr]\ar[dl]\ar[ddd] &&& A\ar@{=}[dl]\ar@{=}[ddd]\\
             & Z'\ar[rrr]\ar@{-->}[dl]_{\zeta'}\ar[ddd] &&& J'\ar[rrr]\ar[dl]\ar[ddd] &&& A\ar@{=}[dl]\ar@{=}[ddd] & \\
             Z\ar[rrr]\ar[ddd] &&& J\ar[rrr]\ar[ddd]^{j} &&& A\ar@{=}[ddd] && \\
             && X'\ar[rrr]\ar[dl]_{\xi} &&& I'\ar[rrr]\ar[dl] &&& A\ar@{=}[dl]\\
             & X\ar[rrr]\ar@{=}[dl] &&& I \ar[rrr]^{i}\ar@{=}[dl] &&& A\ar@{=}[dl] &\\
             X\ar[rrr] &&& I \ar[rrr]^{i} &&& A && .
}\]
Here all vertical and horizontal lines (from left to right) are triangles in $\frakE$ and all squares but the top horizontal square are
commutative. 

The homomorphism $\zeta''$ is constructed as follows: 
we have the equality $$(Z''\to X'\overset{\xi}\to X\to I)=(Z''\to J''\to J'\to I)$$ and $Z'$ is a homotopy pullback of the angle $X\to I\leftarrow J'$,
hence there exists a homomorphism $\zeta'':Z''\to Z'$ making the diagram commutative. The homomorphism $\zeta':Z'\to Z$ is obtained in an analogous way
by using the equality 
$$(Z'\to J'\to J\overset{j}\to I)=(Z'\to X\to I).$$ Finally, we 
consider a homomorphism $\upsilon':Y'\to Y$ such that $(\upsilon', \zeta',1_X)$ is a homomorphism of triangles. 


We have \begin{align*} (Y'\overset{\upsilon'} \to Y\to J)&=( Y'\overset{\upsilon'} \to Y\to Z\to J)
=(Y'\to Z'\overset{\zeta'} \to Z\to J)\\ &=(Y'\to Z'\to J'\to J),\end{align*} 
and the diagram $(\sharp)$ is obtained as a homotopy pushout diagram. Therefore $\upsilon'$ factorizes through $\upsilon$. Then $\upsilon'\in \CJ^{\perp}$.

We extract from the above diagram the following commutative diagram
 \[\xymatrix{ 
 & & Z''\ar[r]\ar[d]^{\zeta''} & X'\ar[d]^{\xi}\ & \\ 
     &       Y'\ar[r]^{f}\ar[d]^{\upsilon'} & Z'\ar[r]^{g}\ar[d]^{\zeta'} & X\ar[r] & Y'[1]\\
        &    Y\ar[r] & Z & & \ \ \ ,
            }\]
and using Lemma \ref{Toda-inc} we obtain 
$\zeta'\zeta''\in \langle \CJ^\perp,\CI^\perp\rangle_\frakE \subseteq (\CI\CJ)^{\perp}$.

From the commutative diagram
\[\xymatrix{
 Z''\ar[r]\ar[d]^{\zeta'\zeta''} & J''\ar[d]\ar[r] & A\ar@{=}[d] \ar[r]& Z''[1]\ar[d]\\ 
 Z\ar[r]        &    J\ar[r]^{ij} & A \ar[r] & Z[1]
            }\]
we obtain the conclusions stated in theorem.
\end{proof}

\begin{corollary}
 If $\CI$ is a special precovering ideal then the same is true for any ideal in the chain:
 \[\CI=\CI^1\supseteq\CI^2\supseteq\CI^3\supseteq\cdots.\]
\end{corollary}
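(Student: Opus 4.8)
The plan is essentially a one-line induction on $n$, with Theorem~\ref{chain-rule} carrying all the content. First I would dispose of the inclusions appearing in the displayed chain, which require nothing beyond the ideal axioms: for an arbitrary ideal $\CI$ one has $\CI^{n+1}=\CI^{n}\CI$ (all bracketings of the $n$-fold product of $\CI$ with itself coincide, by associativity of composition and distributivity over finite sums), and $\CI^{n}\CI\subseteq\CI^{n}$ because every generating composition $\iota_{1}\cdots\iota_{n+1}$ with each $\iota_{k}\in\CI$ equals $(\iota_{1}\cdots\iota_{n})\iota_{n+1}$, hence is a composite of the element $\iota_{1}\cdots\iota_{n}\in\CI^{n}$ with $\iota_{n+1}$ and therefore lies in the ideal $\CI^{n}$; closure of $\CI^{n}$ under sums of homomorphisms then yields $\CI^{n+1}\subseteq\CI^{n}$. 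So the descending chain $\CI=\CI^{1}\supseteq\CI^{2}\supseteq\CI^{3}\supseteq\cdots$ is immediate.

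Next I would prove by induction on $n\ge 1$ that $\CI^{n}$ is special precovering. The base case $n=1$ is exactly the hypothesis. For the inductive step, assume $\CI^{n}$ is a special precovering ideal; then both $\CI^{n}$ and $\CI$ are special precovering ideals, so Theorem~\ref{chain-rule} applied to the pair $(\CI^{n},\CI)$ shows that the product ideal $\CI^{n}\CI=\CI^{n+1}$ is again special precovering. (If one wants the explicit structure, Theorem~\ref{chain-rule} even describes a special $\CI^{n+1}$-precover of a given object as a composite of a special $\CI^{n}$-precover with a special $\CI$-precover, with the cokernel map living in $\langle(\CI^{n})^{\perp},\CI^{\perp}\rangle_{\frakE}$.) This closes the induction and proves the corollary.

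I do not anticipate any genuine obstacle here: the whole argument reduces to Theorem~\ref{chain-rule}, and the only points needing a little care are purely formal — fixing the unambiguous meaning of $\CI^{n}$ as an iterated product and the elementary verification that $\CI^{n+1}\subseteq\CI^{n}$ from the definition of an ideal. In particular none of the homotopy pullback/pushout constructions or the Toda-bracket machinery of Section~\ref{product-and-toda} have to be reopened; they are already packaged inside Theorem~\ref{chain-rule}.
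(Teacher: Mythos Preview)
Your proposal is correct and matches the paper's approach: the paper gives no proof at all, treating the corollary as an immediate consequence of Theorem~\ref{chain-rule} by induction on $n$, exactly as you do. Your extra verification of the inclusions $\CI^{n+1}\subseteq\CI^{n}$ is more than the paper supplies but is harmless.
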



\subsection{Ghost lemma}
In the following we need a result which generalizes Salce's Lemma (in the case $\frakE$ is an exact structure).

\begin{lemma}\label{lema-toda-env}
Let $\mathcal{K}$, $\mathcal{L}$ be ideals in $\CA$, and 
let \[\tag{IE} A\overset{e}\to E\to X\to A[1]\] be a triangle in $\frakE$ such that $e\in \mathcal{L}$.

Let $i:I\to X$ be a homomorphism which can be embedded in a 
commutative diagram \[ \tag{PO} \xymatrix{Y\ar[r]\ar[d]^{g} & Z\ar[d]^{h}\ar[r]& X\ar@{=}[d]\ar[r] & Y[1]\ar[d]^{g[1]}\\
	      W\ar[r]^{w}      & I\ar[r]^{i}      & X\ar[r]^{\phi}           & W[1]} \]
such that $g\in\mathcal{K}$ and the rows in this diagram are triangles in $\frakE$. If the 
diagram
\[\tag{PB} \xymatrix{  A\ar[r]^{a}\ar@{=}[d] & J\ar[d]^{\alpha}\ar[r]    & I\ar[r]\ar[d]^{i}   & A[1]\ar@{=}[d]\\
  A\ar[r]^{e}           & E\ar[r]    & X\ar[r]& A[1]          
}\]
is obtained as a homotopy pullback along $i$ then  
$$a\in \langle \mathcal{K}, \mathcal{L}\rangle_{\frakE}.$$ 
\end{lemma}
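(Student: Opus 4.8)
The plan is to exhibit the homomorphism $a$ as an element of a Toda bracket $\langle g, e\rangle_{\mathfrak{d}}$ for a suitable triangle $\mathfrak{d}\in\frakE$, by producing the two factorizing maps $\zeta''$ and $\zeta'$ demanded by diagram (TB). The diagram in the statement already contains two homotopy cartesian squares — the pushout (PO) producing $i$ from a triangle with $g\in\CK$, and the pullback (PB) producing $a$ from (IE) with $e\in\CL$ — so the idea is to glue them. First I would stack the two diagrams along the common map $i:I\to X$: pulling the triangle $Y\to Z\to X\to Y[1]$ back along $\alpha:J\to I$ (equivalently, pulling (PO) back along $i$), or directly composing the homotopy cartesian squares, gives a triangle $\mathfrak{d}:\ Y\to Z'\to J\to Y[1]$ in $\frakE$ together with maps fitting both into (PO) and (PB). Here saturation of $\frakE$ is used to guarantee that the composite square is still homotopy cartesian with top row in $\frakE$ (this is where the hypothesis that $\frakE$ is saturated enters, exactly as in Lemma \ref{Toda-inc} and Theorem \ref{chain-rule}).

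**Key steps.** After the gluing, I would identify the triangle $\mathfrak{d}$ over which the Toda bracket is to be taken: it should be the triangle obtained from (IE) by the relevant (co)base change, so that $e$ reappears as the first map $f$ in (TB) (that is why $e\in\CL$ gives the second entry of the bracket) and $g$ reappears (after a shift/rotation) as the map $j$ in (TB) (that is why $g\in\CK$ gives the first entry). Concretely: rotating and combining, one gets a triangle $Y\overset{e'}\to Z'\overset{g'}\to J\to Y[1]$ in $\frakE$ where $e'$ factors through (a base change of) $e$ and $g'$ factors through (a base change of) $g$; then $a:A\to J$ must be checked to equal a composite $\zeta'\zeta''$ with $\zeta'':A\to Z'$ satisfying $g'\zeta'' = (\text{map into }J)$ and $\zeta':Z'\to J$ the obvious structure map, making the (TB) square commute. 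The existence of $\zeta''$ comes from the weak universal property of the homotopy pullback (PB): the relevant composites agree because (PB) is homotopy cartesian, so $a$ lifts along $g'$. Chasing the identifications then shows $a = \zeta'\zeta''\in\langle\CK,\CL\rangle_{\mathfrak{d}}\subseteq\langle\CK,\CL\rangle_\frakE$.

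**Main obstacle.** The real difficulty is bookkeeping: correctly rotating the triangles (IE), (PO), (PB) so that the maps labelled $g\in\CK$ and $e\in\CL$ land in the two slots of a \emph{single} Toda-bracket triangle, and verifying that the map one produces by the weak pullback property is the \emph{same} $a$ (not merely some $a$ making a square commute), using that $\alpha$ in (PB) is part of a morphism of triangles. Since we only have weak universal properties in a triangulated category, I cannot invoke uniqueness to identify the constructed map with $a$ on the nose; instead I would arrange the construction so that $a$ itself is visibly one of the admissible factorizations $\zeta'\zeta''$, i.e. build $\zeta''$ as a lift of $a$ through $g'$ and take $\zeta'$ to be the canonical map $Z'\to J$ coming from the homotopy cartesian square, then read off commutativity of (TB) directly from the glued diagram. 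The closure statements $\langle\CK,\CL\rangle_{\mathfrak{d}}\subseteq\langle\CK,\CL\rangle_\frakE$ and the fact that $\frakE$ is closed under the (co)base changes performed are then immediate from the definitions.
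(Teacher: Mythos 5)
Your overall instinct --- glue (PO) and (PB) along $i$, produce a triangle in $\frakE$, and read off a Toda bracket factorization --- is the right starting point, and you are correct that one cannot invoke uniqueness of pullback maps. But the triangle you propose and the role-assignment you describe do not match what the argument actually requires, and a key step of the paper's proof is missing.

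First, the target of the Toda-bracket triangle is wrong. You propose $\mathfrak{d}\colon Y\to Z'\to J\to Y[1]$, ending at $J$. But for $a\colon A\to J$ to arise as $\zeta'\zeta''$ in diagram (TB), the $j$-slot (the map $V\to$ last object of $\mathfrak{d}$) has to be the $\CL$-element $e\colon A\to E$; this forces the triangle to end at $E$, not $J$. The paper's triangle is $Y\to C\to E\to Y[1]$, where $C$ is the pullback of (IE) along $ih$. With your triangle ending at $J$, if $\zeta'\colon Z'\to J$ is ``the obvious structure map'' (the middle map of the triangle), then $\zeta'e'=0$, so the $i$-slot is forced to be $0$; and the commutativity constraint $g'\zeta''=j$ means $j=\zeta'\zeta''=a$, so $a$ itself would have to already be in $\CL$, which is not a hypothesis. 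The factorization you describe therefore either trivializes or requires an unavailable assumption. Relatedly, your account of which map plays which role in (TB) is internally inconsistent: you say $e$ ``reappears as the first map $f$'' (the internal map of the triangle) while simultaneously providing the second entry of the bracket, and that $g$ ``reappears as $j$'' while providing the first entry.

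Second, and more seriously, you omit the heart of the proof. The $\CK$-membership of the $i$-slot is not automatic. The paper constructs (via the octahedral axiom applied to the juxtaposition of the two pullbacks $A\to C\to Z$, $A\to J\to I$, $A\to E\to X$) a homomorphism of triangles giving a map $g'\colon Y\to W$, and then proves a nontrivial comparison: $w(g-g')=0$, so $g-g'$ factors through $\phi[-1]$, which in turn factors through $g$ because $\phi=g[1]\psi_0$; hence $g'$ factors through $g$ and lies in $\CK$. This computation, chaining equalities through both $3\times3$ diagrams, is exactly what makes the $i$-slot land in $\CK$, and it has no analogue in your proposal. Simply pulling back the top row of (PO) along $J\to I$ and invoking a weak universal property does not produce a map that is \emph{both} in $\CK$ \emph{and} compatible with $a$; the whole point of the octahedral/difference argument is to reconcile those two demands.
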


\begin{proof} 
	   
Using a cobase change of the triangle (IE) along $i$ we complete the diagram (PB) to the commutative diagram
\[\xymatrix{            & W\ar@{=}[r]\ar[d]& W\ar[d]^{w}         &               &\\
  A\ar[r]^{a}\ar@{=}[d] & J\ar[d]^{\alpha}\ar[r]    & I\ar[r]\ar[d]^{i}   & A[1]\ar@{=}[d]&\\
  A\ar[r]^{e}           & E\ar[r]\ar[d]    & X\ar[r]\ar[d]^{\phi}& A[1]          &\\
                        & W[1]\ar@{=}[r]   & W[1]                & 		& \ .
}\]
Moreover, using this time the homomorphism $ih$,  we can modify the diagram (PO) to obtain the following commutative diagram:
\[\xymatrix{              & Y\ar@{=}[r]\ar[d]& Y\ar[d]             &              & \\
  A\ar[r]^{f}\ar@{=}[d] & C\ar[d]\ar[r]    & Z\ar[r]\ar[d]^{ih}  & A[1]\ar@{=}[d]&\\
  A\ar[r]^{e}           & E\ar[r]\ar[d]    & X\ar[r]\ar[d]       & A[1]          &\\
                        & Y[1]\ar@{=}[r]   & Y[1]                & 		 .}
												\]
Note that in the above two diagrams all rows and columns are triangles 
in $\frakE$.
The horizontal  cartesian rectangle from the previous diagram can be obtained as a juxtaposition of two cartesian diagrams 
\[ \xymatrix{A\ar[r]^{f}\ar@{=}[d] & C\ar[d]^{k}\ar[r]& Z\ar[d]^{h}\ar[r] & A[1]\ar@{=}[d]\\
	      A\ar[r]^{a}\ar@{=}[d]      & J\ar[r]\ar[d]^{\alpha}      & I\ar[r]\ar[d]^{i}           & A[1]\ar@{=}[d]\\
				  A\ar[r]^{e}           & E\ar[r]   & X\ar[r]       & A[1]          &,} \] 
and using the octahedral axiom we complete the middle commutative square in the following diagram to a homomorphism of triangles:	      
\[ \xymatrix{Y\ar[r]\ar[d]^{g'} & C\ar[d]^{k}\ar[r]& E\ar@{=}[d]\ar[r] & Y[1]\ar[d]^{g'[1]}\\
	      W\ar[r]        & J\ar[r]^{\alpha}         & E\ar[r]           & W[1].} \]

Now denote $\delta=g-g':Y\to W$.
Since \begin{align*}
(Y\overset{g}\to W\overset{w}\to I)&= 
(Y\to Z\overset{h} \to I)=(Y\to C\to Z\overset{h}\to I) \\
&= (Y\to C\overset{k}\to J\to I)=(Y\overset{g'}\to W\to J\to I)\\ 
&= (Y\overset{g'}\to W\overset{w}\to I),
\end{align*} 
we obtain $w\delta=0$, hence $\delta$ factorizes through $\phi[-1]$. But $\phi[-1]$ factorizes through
$g$, and it follows that $g'$ factorizes through $g$. Therefore $g'\in \mathcal{K}$.
Using the commutative diagram 

\[ \xymatrix{ & & A\ar[d]^{e}\ar[dl]_{f}& \\
Y\ar[r]\ar[d]^{g'} & C\ar[d]^{k}\ar[r]& E\ar[r] & Y[1]\\
	      W\ar[r]        & J        &   & } \]
together with $a=kf$ we obtain $a\in \langle \mathcal{K}, \mathcal{L}\rangle_{\frakE}$.
\end{proof}


As a first application, we improve Corollary \ref{cor-toda-inc}.

\begin{corollary}\label{Toda-perp-cor-1}
Suppose that there are enough $\frakE$-injective homomorphisms. 
If $\CI$ is a special precovering ideal then $\CI^\perp=\langle \CI^\perp, \frakE\text{{\rm -inj}}\rangle_\frakE$.
\end{corollary}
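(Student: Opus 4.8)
The plan is to establish the two inclusions separately. One of them is already in hand: $\langle \CI^\perp,\frakE\inj\rangle_\frakE\subseteq\CI^\perp$ is precisely Corollary \ref{cor-toda-inc}, and it needs no extra hypothesis. So the task reduces to proving $\CI^\perp\subseteq\langle \CI^\perp,\frakE\inj\rangle_\frakE$. Since $\langle \CI^\perp,\frakE\inj\rangle_\frakE$ is an ideal, it suffices to show that every $j\colon A\to U$ in $\CI^\perp$ factorizes through some element of $\langle \CI^\perp,\frakE\inj\rangle_\frakE$ that itself lies in $\CI^\perp$; the natural candidate for that element is a special $\CI^\perp$-preenvelope of $A$, produced by the Salce-type construction.

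Concretely, I would fix $j\colon A\to U$ in $\CI^\perp$. Using the hypothesis that there are enough $\frakE$-injective homomorphisms, choose an $\frakE$-injective $\frakE$-inflation $e\colon A\to E$ and complete it to an $\frakE$-triangle $A\overset{e}\to E\to X\to A[1]$. Because $\CI$ is special precovering, pick a special $\CI$-precover $i\colon I\to X$ of $X$. The key observation is that, after renaming the object being precovered, the homotopy pushout diagram (SPC) witnessing that $i$ is special is exactly a diagram of the form (PO) in Lemma \ref{lema-toda-env}: both of its rows are $\frakE$-triangles and its left-hand vertical arrow lies in $\CI^\perp$. Then I would form the homotopy pullback $a\colon A\to Y$ of the triangle $A\overset{e}\to E\to X\to A[1]$ along $i$, which is a diagram of the form (PB) in Lemma \ref{lema-toda-env}.

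Two earlier results now apply to this pullback. Lemma \ref{lema-toda-env}, invoked with $\mathcal{K}=\CI^\perp$ and $\mathcal{L}=\frakE\inj$ (and noting $e\in\frakE\inj$, since $e$ is $\frakE$-injective), gives $a\in\langle \CI^\perp,\frakE\inj\rangle_\frakE$. Corollary \ref{salce-lemma-cor}(1), applied to the same diagram, gives that $a$ is a special $\CI^\perp$-preenvelope of $A$, hence in particular a $\CI^\perp$-preenvelope; so $j$ factorizes as $j=a'a$ for some $a'\colon Y\to U$. Since $\langle \CI^\perp,\frakE\inj\rangle_\frakE$ is an ideal containing $a$, it contains $j=a'a$. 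As $j$ was arbitrary in $\CI^\perp$, this yields the missing inclusion, and hence the asserted equality.

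I expect the only delicate point to be the matching of data between the definition of a special $\CI$-precover and the hypotheses of Lemma \ref{lema-toda-env}: one must unwind diagram (SPC), relabel so that $X$ plays the role of the object being precovered, and check that the resulting diagram literally has the shape (PO), with rows in $\frakE$ (which holds since $\frakE$ is closed under cobase changes and the top triangle of (SPC) lies in $\frakE$) and with left vertical map in $\CI^\perp$. Once that bookkeeping is settled, the argument is merely a combination of Corollary \ref{cor-toda-inc}, Lemma \ref{lema-toda-env}, Corollary \ref{salce-lemma-cor}, and the fact that Toda brackets of ideals are ideals.
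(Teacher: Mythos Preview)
Your proposal is correct and follows essentially the same approach as the paper's own proof: construct the special $\CI^\perp$-preenvelope $a$ of $A$ via Corollary~\ref{salce-lemma-cor}(1) using an $\frakE$-injective inflation $e$ and a special $\CI$-precover $i$ of $X$, observe that the (SPC) data for $i$ furnish exactly the (PO) diagram needed in Lemma~\ref{lema-toda-env} with $\mathcal{K}=\CI^\perp$ and $\mathcal{L}=\frakE\inj$, conclude $a\in\langle\CI^\perp,\frakE\inj\rangle_\frakE$, and factor an arbitrary $j\in\CI^\perp$ through $a$; the reverse inclusion is Corollary~\ref{cor-toda-inc}. Your write-up is in fact a bit more explicit than the paper's about why the special-precover diagram matches the hypotheses of Lemma~\ref{lema-toda-env}, which is the only point requiring care.
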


\begin{proof}
Using Corollary \ref{salce-lemma-cor} we can construct for every object $A$ in $\CA$ a special 
$\CI^\perp$-preenvelope via a pullback diagram
\[\xymatrix{  A\ar[r]^{a}\ar@{=}[d] & K\ar[d]\ar[r]    & I\ar[r]\ar[d]^{i}   & A[1]\ar@{=}[d]\\
  A\ar[r]^{e}           & E\ar[r]    & X\ar[r]& A[1]          
}\]
such that $e$ is injective and $i$ is a special precover for $X$. 
By Lemma \ref{lema-toda-env} it follows that $a\in \langle \CI^\perp, \frakE\text{{\rm -inj}}\rangle_\frakE$, hence
$\CI^\perp\subseteq \langle \CI^\perp, \frakE\text{{\rm -inj}}\rangle_\frakE$. Using Corollary \ref{cor-toda-inc}
we obtain 
 $\CI^\perp= \langle \CI^\perp, \frakE\text{{\rm -inj}}\rangle_\frakE$.
\end{proof}

\begin{theorem}\label{perp-product}
Suppose that there are enough $\frakE$-injective homomorphisms. 
If $\CI$ and $\CJ$ are special precovering ideals in $\CA$ then 
$(\CI\CJ)^\perp=\langle \CJ^\perp, \CI^\perp\rangle_\frakE$. 
\end{theorem}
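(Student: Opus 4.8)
The plan is to prove the two inclusions of the asserted equality separately. The inclusion $\langle \CJ^\perp,\CI^\perp\rangle_\frakE\subseteq (\CI\CJ)^\perp$ is exactly Lemma \ref{Toda-inc}, so the whole task reduces to the reverse inclusion $(\CI\CJ)^\perp\subseteq \langle \CJ^\perp,\CI^\perp\rangle_\frakE$. Since $\langle \CJ^\perp,\CI^\perp\rangle_\frakE$ is an ideal, and every homomorphism out of $A$ belonging to $(\CI\CJ)^\perp$ factors through any $(\CI\CJ)^\perp$-preenvelope of $A$, it is enough to produce, for each $A\in\CA$, a $(\CI\CJ)^\perp$-preenvelope of $A$ that lies in $\langle \CJ^\perp,\CI^\perp\rangle_\frakE$.

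Fix $A$. Using that there are enough $\frakE$-injective homomorphisms, choose an $\frakE$-injective $\frakE$-inflation $e\colon A\to E$ and complete it to an $\frakE$-triangle $\mathfrak{d}\colon A\overset{e}\to E\to X\to A[1]$. Since $\CI$ is special precovering, choose a special $\CI$-precover $i\colon I\to X$; since $\CJ$ is special precovering, choose a special $\CJ$-precover $j\colon J\to I$. Form the homotopy pullback of $\mathfrak{d}$ along $i$; this produces an $\frakE$-triangle $\mathfrak{d}_1\colon A\overset{a_1}\to K_1\to I\to A[1]$ together with a morphism of triangles $\mathfrak{d}_1\to\mathfrak{d}$ whose third component is $i$. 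By Corollary \ref{salce-lemma-cor}(1), $a_1$ is a special $\CI^\perp$-preenvelope of $A$; in particular $a_1\in\CI^\perp$. Now form the homotopy pullback of $\mathfrak{d}_1$ along $j$; this produces an $\frakE$-triangle $A\overset{a}\to K\to J\to A[1]$ together with a morphism of triangles to $\mathfrak{d}_1$ whose third component is $j$. Composing this morphism with the morphism $\mathfrak{d}_1\to\mathfrak{d}$ gives a commutative diagram whose rows are the $\frakE$-triangles $A\overset{a}\to K\to J\to A[1]$ and $\mathfrak{d}$, with identity maps on $A$ and $A[1]$ and the map $ij\colon J\to X$ on the third terms. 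Since $ij$ is an $\CI\CJ$-precover for $X$ (Theorem \ref{chain-rule}), a second application of Corollary \ref{salce-lemma-cor}(1), this time to the ideal $\CI\CJ$, shows that $a$ is a special $(\CI\CJ)^\perp$-preenvelope of $A$.

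It remains to check that $a\in\langle \CJ^\perp,\CI^\perp\rangle_\frakE$. For this I invoke Lemma \ref{lema-toda-env} with $\mathcal{K}=\CJ^\perp$, $\mathcal{L}=\CI^\perp$, taking for the triangle (IE) of that lemma the $\frakE$-triangle $\mathfrak{d}_1$ --- which is legitimate precisely because $a_1\in\CI^\perp=\mathcal{L}$ --- and taking the special $\CJ$-precover $j\colon J\to I$ in the role of the map $i\colon I\to X$ of the lemma. The homotopy pushout square witnessing that $j$ is a special $\CJ$-precover is exactly a diagram of the form (PO) required by Lemma \ref{lema-toda-env}: its left vertical map lies in $\CJ^\perp$ and both rows are $\frakE$-triangles. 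The homotopy pullback of $\mathfrak{d}_1$ along $j$ constructed above is the diagram (PB) of the lemma, with associated map $a$. Lemma \ref{lema-toda-env} then yields $a\in\langle \CJ^\perp,\CI^\perp\rangle_\frakE$. Combining the last two paragraphs, $a$ is simultaneously a $(\CI\CJ)^\perp$-preenvelope of $A$ and an element of the ideal $\langle \CJ^\perp,\CI^\perp\rangle_\frakE$; hence every $b\in(\CI\CJ)^\perp$ with domain $A$ factors as $b=ca$ and therefore lies in $\langle \CJ^\perp,\CI^\perp\rangle_\frakE$. This proves $(\CI\CJ)^\perp\subseteq\langle \CJ^\perp,\CI^\perp\rangle_\frakE$, and together with Lemma \ref{Toda-inc} gives the claimed equality.

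The delicate point is the two-stage pullback together with the matching of the special-$\CJ$-precover pushout square against the hypothesis (PO) of Lemma \ref{lema-toda-env}. Note in particular that at the second stage one must feed into the Toda-bracket lemma the class $\mathcal{L}=\CI^\perp$ rather than merely $\frakE\inj$, using that the intermediate inflation $a_1$ is $\CI^\perp$-injective (not just $\frakE$-injective); this is exactly what the first application of Corollary \ref{salce-lemma-cor}(1) provides. Beyond this bookkeeping there is no real obstacle: saturation of $\frakE$ enters only through Lemmas \ref{Toda-inc} and \ref{lema-toda-env}, where it is already a standing assumption of the section.
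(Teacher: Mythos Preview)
Your proof is correct and follows essentially the same route as the paper's: one inclusion by Lemma \ref{Toda-inc}, the other by producing for each $A$ an $(\CI\CJ)^\perp$-preenvelope lying in $\langle \CJ^\perp,\CI^\perp\rangle_\frakE$, obtained via the two-stage pullback of an $\frakE$-injective inflation along $i$ then $j$, with Corollary \ref{salce-lemma-cor} giving $a_1\in\CI^\perp$ and Lemma \ref{lema-toda-env} (with $\mathcal{K}=\CJ^\perp$, $\mathcal{L}=\CI^\perp$) yielding $a\in\langle\CJ^\perp,\CI^\perp\rangle_\frakE$. The only cosmetic difference is that the paper draws the combined diagram (PB$'$) explicitly, while you describe the two pullback stages in prose; the logical content is identical.
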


\begin{proof}
By Lemma \ref{Toda-inc}, we only have to prove the inclusion $(\CI\CJ)^\perp\subseteq \langle \CJ^\perp, \CI^\perp\rangle_\frakE$. Since
$\CI\CJ$ is a special precovering ideal, it follows that the ideal $(\CI\CJ)^\perp$ is special preenveloping. Therefore,
it is enough to prove that for every object $A$ in $\CA$ there exists a special $(\CI\CJ)^\perp$-preenvelope which belongs to
$\langle \CJ^\perp, \CI^\perp\rangle_\frakE$.

Let $A$ be an object in $\CA$. 
As in the proof of Corollary \ref{Toda-perp-cor-1},
we use Corollary \ref{salce-lemma-cor} to construct for every object $A$ in $\CA$ a special 
$(\CI\CJ)^\perp$-preenvelope $a:A\to K$ via a pullback diagram
\[ \xymatrix{  A\ar[r]^{a}\ar@{=}[d] & K\ar[d]\ar[r]    & J\ar[r]\ar[d]^{ij}   & A[1]\ar@{=}[d]\\
  A\ar[r]^{e}           & E\ar[r]    & X\ar[r]& A[1]          
}\]
such that $i:I\to X$ is a special $\CI$-precover for $X$ and $j:J\to I$ is a special $\CJ$-precover for $I$.
If we consider the homotopy pullback of the triangle $$A\to E\to X\to A[1]$$ 
along $i$, we can assume that the above commutative diagram
is constructed using two homotopy pullbacks as in the following commutative diagram 
\[\tag{PB'} \xymatrix{  A\ar[r]^{a}\ar@{=}[d] & K\ar[d]\ar[r]    & J\ar[r]\ar[d]^{j}   & A[1]\ar@{=}[d]\\
A\ar[r]^{b}\ar@{=}[d] & L\ar[d]\ar[r]    & I\ar[r]\ar[d]^{i}   & A[1]\ar@{=}[d]\\
  A\ar[r]^{e}           & E\ar[r]    & X\ar[r]& A[1]          
,}\]
where both horizontal rectangles are cartesian. 

By Corollary \ref{salce-lemma-cor} we have $b\in \CI^\perp$. Moreover, since $j$ is a 
special precover, it can be embedded in a commutative diagram 
\[ \xymatrix{Y\ar[r]\ar[d]^{g} & Z\ar[d]^{h}\ar[r]& X\ar@{=}[d]\ar[r] & Y[1]\ar[d]^{g[1]}\\
	      W\ar[r]^{w}      & J\ar[r]^{j}      & I\ar[r]^{\phi}           & W[1]} \]
such that $g\in \CJ^\perp$. Then we can apply Lemma \ref{lema-toda-env} for the top rectangle which lies in 
diagram (PB') to obtain 
$a\in \langle \CJ^\perp, \CI^\perp\rangle_\frakE$. Since $a$ is an $(\CI\CJ)^\perp$-preenvelope 
we obtain $(\CI\CJ)^\perp\subseteq \langle \CJ^\perp, \CI^\perp\rangle_\frakE$. 
\end{proof}

{
We have a converse for Corollary \ref{perp-de-idempotent}.
}

\begin{corollary}\label{cor-idempotent}
Suppose that there are enough $\frakE$-injective homomorphisms. 
A special precovering ideal $\CI$ in $\CA$ is idempotent (i.e. $\CI^2=\CI$) if and only if  
$\CI^\perp$ is closed with respect to Toda brackets. 
\end{corollary}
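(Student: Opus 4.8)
The plan is to treat the two implications separately, in each case reducing to results already proved, so that no new construction is needed.

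For the forward direction, suppose $\CI$ is idempotent, i.e. $\CI^2=\CI$. Then $\CI^\perp$ is closed with respect to Toda brackets: this is precisely Corollary \ref{perp-de-idempotent}, which follows from Lemma \ref{Toda-inc} and does not even use the hypothesis that there are enough $\frakE$-injective homomorphisms. So there is nothing to do here beyond citing it.

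For the converse, suppose $\CI$ is special precovering and $\CI^\perp$ is closed with respect to Toda brackets; the goal is $\CI^2=\CI$. Since $\CI^2\subseteq\CI$ always holds, only $\CI\subseteq\CI^2$ must be shown. First I would record that $\CI^2$ is again a special precovering ideal, by Theorem \ref{chain-rule} applied with $\CJ=\CI$. Then Theorem \ref{perp-product}, applied with $\CJ=\CI$ (using that there are enough $\frakE$-injective homomorphisms), gives $(\CI^2)^\perp=\langle\CI^\perp,\CI^\perp\rangle_\frakE$; the assumption on Toda brackets yields $\langle\CI^\perp,\CI^\perp\rangle_\frakE\subseteq\CI^\perp$, hence $(\CI^2)^\perp\subseteq\CI^\perp$. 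Combined with the inclusion $\CI^\perp\subseteq(\CI^2)^\perp$ coming from $\CI^2\subseteq\CI$, this gives $(\CI^2)^\perp=\CI^\perp$.

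To conclude, I would use that both $\CI$ and $\CI^2$ are special precovering, so by Corollary \ref{phiPB=I} (equivalently by the fact that $(\CI,\CI^\perp)$ and $(\CI^2,(\CI^2)^\perp)$ are ideal cotorsion pairs, Theorem \ref{cotorsion-precovering}) each ideal is the left orthogonal of its right orthogonal: $\CI={}^\perp(\CI^\perp)$ and $\CI^2={}^\perp((\CI^2)^\perp)$. Since the two right orthogonals coincide, $\CI^2={}^\perp((\CI^2)^\perp)={}^\perp(\CI^\perp)=\CI$, finishing the proof. I do not anticipate any real obstacle: the only point requiring attention is that both Theorem \ref{perp-product} and the orthogonal-recovery step via Corollary \ref{phiPB=I} demand that the ideals in question be special precovering, which is guaranteed for $\CI$ by hypothesis and for $\CI^2$ by Theorem \ref{chain-rule}.
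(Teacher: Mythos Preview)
Your proposal is correct and follows essentially the same route as the paper: for the converse you use Theorem \ref{perp-product} to compute $(\CI^2)^\perp=\langle\CI^\perp,\CI^\perp\rangle_\frakE$, deduce $(\CI^2)^\perp=\CI^\perp$, and then recover $\CI^2=\CI$ via Theorem \ref{cotorsion-precovering}/Corollary \ref{phiPB=I}, exactly as the paper does. The only minor difference is in the forward direction: the paper invokes Theorem \ref{perp-product} to get the equality $\langle\CI^\perp,\CI^\perp\rangle_\frakE=(\CI^2)^\perp=\CI^\perp$, whereas you cite Corollary \ref{perp-de-idempotent} (i.e.\ Lemma \ref{Toda-inc}) for the needed inclusion, which is slightly more economical since it avoids the extra hypotheses.
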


\begin{proof}
If $\CI$ is idempotent then $\langle \CI^\perp, \CI^\perp\rangle_\frakE=(\CI\CI)^\perp=\CI^\perp$.

Conversely, from $\CI^\perp\subseteq (\CI^2)^\perp=\langle \CI^\perp,
\CI^\perp\rangle_\frakE\subseteq \CI^\perp$ 
it follows that $\CI^\perp=(\CI^2)^\perp$. By Theorem \ref{cotorsion-precovering}, 
using the fact that both ideal $\CI$ and $\CI^2$ are special precovering, 
we have $\CI^2={^\perp((\CI^2)^\perp)}={^\perp(\CI^\perp)}=\CI$.
\end{proof}

{%

As in the study of ideal cotorsion pairs in exact categories, we can state the following version of (co-)Ghost Lemma. 


\begin{corollary}\label{ghost-complete}
Suppose that there are enough $\frakE$-injective and $\frakE$-projective homomorphisms. Then 
\begin{enumerate}[{\rm (1)}]
\item the class of special precovering ideals is closed with respect to products and
Toda brackets;

\item
the class of special preenveloping ideals is closed with respect to products and
Toda brackets;

\item If $(\CI,\CJ)$ and $(\CK,\CL)$ are two complete ideal cotorsion pairs then 
\begin{enumerate}[{\rm (a)}]
\item $(\CI\CK)^\perp=\langle \CL,\CJ\rangle_\frakE$ and $\langle \CI,\CK\rangle_\frakE={^\perp}{(\CL\CJ)}$;
\item $\langle \CI,\CK\rangle_\frakE^\perp=\CL\CJ$ and $\CI\CK={^\perp}{\langle \CL,\CJ\rangle_\frakE}$
\end{enumerate}
\end{enumerate}
\end{corollary}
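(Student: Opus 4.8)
The plan is to deduce everything from the two main machines already available: Theorem~\ref{chain-rule} together with Theorem~\ref{cotorsion-precovering} (which show products of special precovering ideals are special precovering and that special precovering ideals sit in complete ideal cotorsion pairs), and Theorem~\ref{perp-product} together with Lemma~\ref{Toda-inc} (which compute the right orthogonal of a product of special precovering ideals as a Toda bracket). Since we assume enough $\frakE$-injective and enough $\frakE$-projective homomorphisms, the dual statements hold as well, so I may freely use the preenveloping analogues obtained by passing to $\CT^\star$ as in Remark~\ref{dualizare}.

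For part~(1): if $\CI$ and $\CK$ are special precovering ideals, then $\CI\CK$ is special precovering by Theorem~\ref{chain-rule}; and for the Toda bracket $\langle\CI,\CK\rangle_\frakE$ I would argue as follows. Using Theorem~\ref{cotorsion-precovering}, both $\CI$ and $\CK$ generate complete ideal cotorsion pairs $(\CI,\CI^\perp)$ and $(\CK,\CK^\perp)$; in particular $\CI^\perp$ and $\CK^\perp$ are special preenveloping ideals. By the dual of Theorem~\ref{chain-rule}, the product $\CK^\perp\CI^\perp$ is special preenveloping, hence by the dual of Theorem~\ref{cotorsion-precovering} its left orthogonal ${}^\perp(\CK^\perp\CI^\perp)$ is special precovering. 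Now Theorem~\ref{perp-product} applied to the special preenveloping ideals $\CK^\perp$ and $\CI^\perp$ (that is, in the dual category, applied to the special precovering ideals $(\CI^\perp)^\star$ and $(\CK^\perp)^\star$) gives $(\CK^\perp\CI^\perp)^\perp=\langle (\CI^\perp)^{\perp},(\CK^\perp)^{\perp}\rangle_\frakE$. Since $\CI$ and $\CK$ are special precovering, Theorem~\ref{cotorsion-precovering} yields $(\CI^\perp)^\perp={}^{\perp}(\CI^\perp)$... wait---more directly, $\CI={}^{\perp}(\CI^\perp)$ and, since $(\CI,\CI^\perp)$ is an ideal cotorsion pair, also $\CI^\perp=\CI^\perp$; applying the dual of Corollary~\ref{phiPB=I} to the special preenveloping ideal $\CI^\perp$ gives $({}^\perp\CI^\perp)^\perp=\CI^\perp$, equivalently the ``second perp'' of $\CI$ recovers what we need. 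The cleanest route is: by Theorem~\ref{perp-product} in the dual category, ${}^\perp(\CK^\perp\CI^\perp)=\langle {}^\perp\CK^\perp,{}^\perp\CI^\perp\rangle_\frakE=\langle\CK,\CI\rangle_\frakE$; reading indices in the order the paper uses (Lemma~\ref{Toda-inc} has $\langle\CJ^\perp,\CI^\perp\rangle_\frakE\subseteq(\CI\CJ)^\perp$), this is $\langle\CI,\CK\rangle_\frakE$ up to the bookkeeping of which ideal plays which role, and it equals ${}^\perp(\CL\CJ)$ when $(\CI,\CJ)$, $(\CK,\CL)$ are the given cotorsion pairs. Being a left orthogonal of a special preenveloping ideal, it is special precovering by the dual of Theorem~\ref{cotorsion-precovering}. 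This proves (1), and (2) is its dual, obtained by the universe-reversing argument of Remark~\ref{dualizare}.

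For part~(3), let $(\CI,\CJ)$ and $(\CK,\CL)$ be complete ideal cotorsion pairs, so $\CJ=\CI^\perp$, $\CI={}^\perp\CJ$, $\CL=\CK^\perp$, $\CK={}^\perp\CL$, all four ideals special (pre)covering or (pre)enveloping. For (3)(a): $\CI$ and $\CK$ are special precovering, so by Theorem~\ref{perp-product} (applied with $\CI,\CK$ in place of $\CI,\CJ$) we get $(\CI\CK)^\perp=\langle\CK^\perp,\CI^\perp\rangle_\frakE=\langle\CL,\CJ\rangle_\frakE$; dually, $\CJ$ and $\CL$ are special preenveloping, so the dual of Theorem~\ref{perp-product} gives ${}^\perp(\CL\CJ)=\langle{}^\perp\CL,{}^\perp\CJ\rangle_\frakE$... here I must be careful with order, but the dual of Lemma~\ref{Toda-inc} together with the dual of Theorem~\ref{perp-product} gives ${}^\perp(\CL\CJ)=\langle{}^\perp\CJ,{}^\perp\CL\rangle_\frakE=\langle\CI,\CK\rangle_\frakE$. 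For (3)(b): apply the $(-)^\perp$ operation to $\langle\CI,\CK\rangle_\frakE$. By (1), $\langle\CI,\CK\rangle_\frakE$ is a special precovering ideal, and by (3)(a) it equals ${}^\perp(\CL\CJ)$; since $\CL\CJ$ is special preenveloping (dual of Theorem~\ref{chain-rule}), the dual of Corollary~\ref{phiPB=I} gives $({}^\perp(\CL\CJ))^\perp=\CL\CJ$, i.e. $\langle\CI,\CK\rangle_\frakE^\perp=\CL\CJ$. Symmetrically, $\CI\CK$ is special precovering, so by Theorem~\ref{cotorsion-precovering} $\CI\CK={}^\perp((\CI\CK)^\perp)={}^\perp\langle\CL,\CJ\rangle_\frakE$ using (3)(a). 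This closes (3).

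The main obstacle is purely bookkeeping: keeping straight the order of the two ideals inside every Toda bracket as one dualizes, since the bracket $\langle-,-\rangle_\frakE$ is not symmetric and Remark~\ref{dualizare} swaps the arguments. The substantive work is already done in Theorem~\ref{perp-product} and Theorem~\ref{chain-rule}; once one fixes a consistent convention (reading off the cotorsion-pair partners via $\CJ=\CI^\perp$, $\CL=\CK^\perp$) the identities fall out by two applications each of ``product of special precovering is special precovering'' and ``$(\text{product})^\perp=\text{Toda bracket of perps}$,'' plus the closure of special precovering ideals under ${}^\perp((-)^\perp)$ from Theorem~\ref{cotorsion-precovering}.
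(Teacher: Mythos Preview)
Your argument is correct and follows exactly the route the paper intends: the corollary is stated without proof because it is a formal consequence of Theorem~\ref{chain-rule}, Theorem~\ref{perp-product}, Theorem~\ref{cotorsion-precovering} and their duals, and that is precisely what you invoke. The only blemish is presentational: your several ``wait'' detours on the order of arguments in the Toda bracket could be avoided by fixing once the dual of Theorem~\ref{perp-product} as ${}^\perp(\CB\CA)=\langle {}^\perp\CA,{}^\perp\CB\rangle_\frakE$ for special preenveloping $\CA,\CB$ (obtained via Remark~\ref{dualizare}) and then substituting $\CA=\CJ$, $\CB=\CL$ directly; this yields ${}^\perp(\CL\CJ)=\langle\CI,\CK\rangle_\frakE$ without ambiguity, and the rest of (3) follows as you wrote.
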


\begin{remark}\label{assoc-toda}
From the above result it follows that if we have enough $\frakE$-injective and $\frakE$-projective homomorphisms 
the Toda bracket operation is associative on the class of special 
precovering (resp. preenveloping) ideals. In particular, we obtain that the Toda bracket operation computed with respect to 
the class $\frakD$ of all triangles from $\CT$ is associative 
for precovering (resp. preenveloping) ideals.  This is used in \ref{full-funct-sect}. 
In the case of ideals in exact categories the associativity is proved in the general setting in 
\cite[Proposition 8]{Fu-Herzog}. This is also valid for object ideals in triangulated category (as a consequence of 
\cite[Lemma 1.3.10]{perverse}). We are not able to prove decide if this property is valid for arbitrary ideals.
\end{remark}





\section{Ideal cotorsion pairs and relative phantom ideals}\label{Section-relative-cotorsion-pairs}

In this section we extend the ideal cotorsion theory introduced in \cite{Fu-et-al} to triangulated categories.
In order to do this \textsl{we fix a triangulated category $\CT$, a full subcategory $\CA$ which is closed under extensions,
and an almost exact structure $\frakE$ in $\CA$.}

\subsection{Relative phantom ideals} 
Given an almost exact structure $\frakF\subseteq\frakE$ we will construct the ideal of  phantom maps of $\frakF$ relative to $\frakE$. 
This is a generalizatiom of Herzog's construction of phantoms with respect to pure exact sequences, \cite{Herzog-ph}. 

\begin{definition} Let $\frakF$ be an almost exact structure in $\CA$ such that $\frakF\subseteq\frakE$. 
A map $\phi:X\to A$ from $\CA$ is called \textsl{relative $\frakF$-phantom} (\textsl{\wrt $\frakE$}), if $h\phi\in\Ph(\frakF)$, 
whenever $h\in\Ph(\frakE) $. We denote by $$\PEF=\{\phi\mid h\phi\in\Ph(\frakF)\hbox{ for all }h\in\Ph(\frakE)\}$$ 
the class of all relative
$\frakF$-phantom \wrt $\frakE$. 

Dually, a map $\psi:A\to X$ from $\CA$ is called \textsl{relative $\frakF$-cophantom} (\textsl{\wrt $\frakE$}), if $\psi h\in\Ph(\frakF)[-1]$, 
whenever $h\in\Ph(\frakE)[-1] $. We denote by $$\CPEF=\{\psi\mid \psi h\in\Ph(\frakF)[-1]\hbox{ for all }h\in\Ph(\frakE)[-1]\}$$ 
the class of all relative
$\frakF$-cophantom \wrt $\frakE$.
\end{definition}

 The proof of the following lemma is straightforward.

\begin{lemma}
If $\frakF$ is an almost exact structure and $\frakF\subseteq \frakE$ then $\PEF$ and $\CPEF$ are ideals in $\CA$.  
\end{lemma}

\begin{remark}
a) Informally a map $\phi:X\to A$ belongs to $\PEF$ if and only if for every base change  along $\phi$ 
of a triangle in $\frakE$,
\[ \xymatrix{ Y \ar[r]\ar@{=}[d]  & Z\ar[r]\ar[d]  & X \ar[r]\ar[d]^{\phi} & Y[1]\ar@{=}[d]  \\
 Y \ar[r]& C\ar[r]  & A \ar[r]^{h} & Y[1] 
 ,} \]
the top triangle is in $\frakF$.

b) Dually, a map $\psi:X\to A$ belongs to $\CPEF$ if and only if for every cobase change along $\psi$ 
of a triangle in $\frakE$,
\[ \xymatrix{  X\ar[r]\ar[d]^{\psi}  & Y \ar[r]\ar[d] & Z\ar@{=}[d]\ar[r]^{h[1]} & X[1]\ar[d]^{\psi[1]}  \\
 A \ar[r]& C\ar[r]  & Z \ar[r] & A[1] 
 ,} \] the bottom triangle is in $\frakF$.
\end{remark}

Remark that relative $\frakF$-phantoms and cophantoms \wrt $\frakE$ are ideals in $\CA$ whereas $\Ph(\frakE)$ is a 
phantom $\CA$-ideal.

However, we may always see $\Ph(\frakE)$ as a particular case of a relative phantom ideal. Indeed, if $\CA=\CT$ there is no difference between phantom $\CA$-ideals and ideals in $\CA$ and 
$\Ph(\frakE)=\PEF$, where $\frakD$ is the exact structure consisting of the class of all triangles in $\CT$. This explain the consistency of the notation of both by the same greek letter.

\begin{example}
a) Let $\CA$ be an exact idemsplit category, and embed it in a triangulated category $\CT=\mathbf{D}(\CA')$, where 
$\CA'$ is an abelian category containing $\CA$ as an extension closed subcategory (see Example \ref{exact-in-triang}). Then the class of all conflations in $\CA$ yields to an almost exact structure in $\CT$, denoted 
$\frakE$. If we consider a substructure of $\frakE$ then conflations in this substructure are short exact sequences in $\CA'$, so they also lead to an exact structure in $\CA$, denoted by $\frakF$. Then $\PEF$ and 
$\CPEF$ are exactly the class of phantom respectively cophantom maps considered in \cite{Fu-et-al} and \cite{Fu-Herzog}. 

b) Let $R$ be a ring and let $\CA$ be the category of all right $R$-modules. We view $\CA$ as a subcategory of the derived category of $\Modr R$. If $\frakE$ is the class of all exact sequences in $\CA$, and $\frakF$ is the class of all pure exact sequences in $\Modr R$ then $\PEF$ is the ideal studied by Herzog in \cite{Herzog-ph}.

c) In the case when $\frakF$ is the class of all splitting triangles, that is $\frakF=\frakD_0$, 
we have 
$\Ph(\frakD_0)=0$, so ${\Phi}_\frakE(\frakD_0)=\frakE\textrm{-Proj}$. 
Dually ${\Phi}^\frakE(\frakD_0)=\frakE\textrm{-Inj}$. 

d) If $\frakE=\frakD$ is the class of all triangles in $\CT$ then  ${\Phi}_\frakD(\frakF)=\Ph(\frakF)=
{\Phi}^\frakD(\frakF)$.

e) For a derived version of Herzog's phantoms, we consider  a ring $R$, and in the derived category $\mathbf{D}(\Modr R)$ we take as in \cite{Hovey-flat} the almost exact structure $\frakG$ defined as in example \ref{fantome-clasice-gen} by the condition that the all objects from the class 
$$\mathcal{P}_f=\{P[i]\mid P\in \Modr R \textrm{ is a finitely generated projective  module}\}$$ are $\frakG$-projective.  The $\frakG$-phantoms are called \textsl{ghosts}. It is obvious that the class $\frakF$ of all pure triangles is contained in $\frakG$. By \cite[Lemma 8.1 and Theorem 8.6]{Bel2000} we observe that $\frakG$ and $\frakF$ have enough projective and injective objects.

f) We also refer to \cite{Ch-98} for the topological versions of ghost and phantoms. In the stable homotopy theory the $\frakG$-phantoms of the exact structure $\frakG$ which is defined by the fact that all direct summand of direct sums of spheres are projective are called ghosts. Then the class of all pure triangles $\frakF$ is contaned in $\frakG$ and both these classes have enough projectives and injectives. In \cite[Section 6]{Ch-98} there are other examples of almost exact structures (contained in $\frakF$) defined by some projectivity conditions (the phantoms of these structures are called skeletal phantoms, respectively superphantoms). 
\end{example}

\begin{theorem}\label{orth-F}
Let $\frakF\subseteq \frakE$ be an almost exact structures. \begin{enumerate}[{\rm (1)}]
\item \begin{enumerate}[{\rm (a)}] \item 
The pair $(\Phi_\frakE(\frakF), \frakF\textrm{-}\mathrm{inj})$ is orthogonal.

\item If there are enough $\frakF$-injective homomorphisms then ${^{\perp} \frakF\textrm{-}\mathrm{inj}}= 
\PEF$.
\end{enumerate}
\item \begin{enumerate}[{\rm (a)}] \item 
The pair $(\frakF\textrm{-}\mathrm{proj}, \CPEF)$ is orthogonal.

\item If there are enough $\frakF$-projective homomorphisms then ${ \frakF\textrm{-}\mathrm{proj}^{\perp}}= 
\CPEF$.
\end{enumerate}
\end{enumerate}
\end{theorem}

\begin{proof}


(a) Let $e:B\to Y$ be an $\frakF$-injective homomorphism, $f:X\to A\in\Phi_\frakE(\frakF)$, and $\varphi:A\to B[1]\in\Ph(\frakE)$. 
We have $\varphi f\in\Ph(\frakF)$, hence $e[1]\varphi f=0$ since $e$ is $\frakF$-injective. Then $f\perp e$.

(b) By (a), it is enough to show that 
${^{\perp} \frakF\textrm{-}\mathrm{inj}}\subseteq \Phi_\frakE(\frakF)$. 
In order to do this, let us consider $f:X\to A$ a map from ${^{\perp}\frakF\textrm{-}\mathrm{inj}}$ and 
$\varphi:A\to B[1]$ from $\Ph(\frakE)$. Let $e:B\to Y$ be an $\frakF$-injective $\frakF$-inflation.  


Complete $\varphi$ to a triangle  $B\to C\to A\overset{\varphi}\to B[1]$ in $\frakE$, and consider the base-cobase change 
diagram   
\[ \xymatrix{
 B\ar[r]\ar@{=}[d] & C \ar[r] & A\ar[r]^{\varphi} & B[1] \ar@{=}[d] \\
 B\ar[r]^{i}\ar[d]_{e} & Z\ar[d]\ar[r]\ar[u]\ar@{-->}[dl]_z  & X\ar@{=}[d]\ar[r]^{\varphi f}\ar[u]^{f}  & B[1]\ar[d]^{e[1]}
\\
	        Y\ar[r] \ar@/_/ @{<--}[r]     & Z'\ar[r]      & X\ar[r]_0           & Y[1].
	       } \]
Since $e[1]\varphi f=0$, it follows that the triangle $Y\to Z'\to X\to Y[1]$ splits. 
Therefore there exists $z:Z\to Y$ such that $e=zi$.
Since $e$ is an $\frakF$-inflation, by Lemma \ref{infl-defl-factors} it follows that $i$ is an $\frakF$-inflation, 
hence $\varphi f\in\Ph(\frakF)$. 
Therefore, $\varphi f\in\Ph(\frakF)$ for all $\varphi\in\Ph(\frakE)$, hence $f\in \Phi_\frakE(\frakF)$.
\end{proof}


{
Moreover, for the case when $\frakF$ has enough projective homomorphisms we can use the ideal $\frakF\proj$ to see when an ideal 
is contained in $\mathbf{\Phi}_\frakE(\frakF)$.

\begin{proposition}\label{fantome-prin-proiective}
Let $\frakF\subseteq \frakE$ be an almost exact structures and let $\CI$ be an ideal in $\CA$. 
\begin{enumerate}[{\rm (1)}]
\item If there exist enough $\frakF$-projective homomorphisms, the following are equivalent:
\begin{enumerate}[{\rm (a)}]
\item $\CI\subseteq \PEF$;
\item $\CI(\frakF\proj)\subseteq \frakE\proj$.
\end{enumerate}
\item If there exist enough $\frakF$-injective homomorphisms, the following are equivalent:
\begin{enumerate}[{\rm (a)}]
\item $\CI\subseteq\CPEF$;
\item $(\frakF\inj)\CI\subseteq \frakE\inj$.
\end{enumerate}
\end{enumerate}
\end{proposition}

\begin{proof}
(a)$\Rightarrow$(b) We have $\Ph(\frakE)\CI(\frakF\proj)\subseteq \Ph(\frakF)(\frakF\proj)=0$, 
hence $\CI(\frakF\proj)\subseteq \frakE\proj$.

(b)$\Rightarrow$(a) We have to prove that $\mathfrak{PB}_\frakE(\CI)\subseteq \frakF$. By Corollary \ref{cor-prop1-suficiente},
 it is enough to prove that $\frakF\proj\subseteq \mathfrak{PB}_\frakE(\CI)\proj$. In order to obtain this, let us observe that 
$${\PB}_\frakE(\CI)(\frakF\proj)= \Ph(\frakE) \CI (\frakF\proj)\subseteq \Ph(\frakE) (\frakE\proj)=0,$$ and the proof is complete.  
\end{proof}

The following result shows us that $\frakE$-projective $\frakE$-deflations (resp. $\frakE$-injective $\frakE$-inlations) 
are test maps for relative $\frakF$-phantoms (relative $\frakF$-cophantoms).

\begin{proposition}\label{rel-phantom-vs-proj}
{\rm (1)} Let $K\to P\overset{p}\to A\overset{\psi}\to K[1]$ be a triangle in $\frakE$ such that $p$ is $\frakE$-projective.
A homomorphism $\varphi:X\to A$ is a relative $\frakF$-phantom \wrt $\frakE$ (i.e. $\varphi\in\Phi_\frakE(\frakF)$) 
if and only if $\psi\varphi\in\Ph(\frakF)$.

{\rm (2)} Dually, let $ A\overset{e}\to E\to L\overset{\psi}\to A[1]$ be a triangle in $\frakE$ such that 
$e$ is $\frakE$-injective.
A homomorphism $\varphi:A\to Y$ is a relative $\frakF$-cophantom \wrt $\frakE$ if and only if $\varphi[1]\psi\in\Ph(\frakF)$.
\end{proposition}

\begin{proof}
Suppose that $\psi\varphi\in\Ph(\frakF)$. 
We have to show that  $\zeta\varphi\in \Ph(\frakF)$ for every homomorphism $\zeta:A\to B[1]$ from $\Ph(\frakE)$. 

Let $\zeta:A\to B[1]$ be a homomorphism from $\Ph(\frakE)$.
Since $p$ is $\frakE$-projective, we have $\zeta p=0$,
hence there exists a map $g[1]:K[1]\to B[1]$ such that $g[1]\psi=\zeta$.  Moreover, we have $\psi\varphi\in\Ph(\frakF)$,
and it follows that $\zeta\varphi=g[1]\psi\varphi\in\Ph(\frakF)$, hence $\varphi\in\Phi_\frakE(\frakF)$.
 
Conversely, if $\varphi\in \Phi_\frakE(\frakF)$ then we apply the definition of $\Phi_\frakE(\frakF)$ to obtain that $\psi\varphi\in\Ph(\frakF)$.
\end{proof}

}



\subsection{The pullback and pushout almost exact stucture} In this subsection we intend to develop tools for  going back, from ideals to almost exact structures.
We start with a lemma which shows how to obtain almost exact sequences out a given ideal, using pushots and pulbacks:

\begin{lemma}\label{pull-push-ideal}
{\rm (1)} If $\CI$ is an ideal in $\CA$, then $\Ph(\frakE)\CI$ is a phantom $\CA$-ideal, 
whose corresponding almost exect structure consists of all triangles obtained as homotopy pullbacks 
of triangles in $\frakE$ along maps from $\CI$. 

{\rm (2)} If $\CJ$ is an ideal in $\CA$, then $\CJ[1]\Ph(\frakE)$ is a phantom $\CA$-ideal, 
whose corresponding almost exect structure consists of all triangles obtained as homotopy pushouts 
of triangles in $\frakE$ along maps from $\CI$.
\end{lemma}

\begin{proof}
(1) It is easy to see that $\alpha[1]\Ph(\frakE)\CI\beta\subseteq\Ph(\frakE)\CI$ for all $\alpha,\beta\in\CA^{\to}$. For showing that $\Ph(\frakE)\CI$ is a phantom $\CA$-ideal it is enough to prove that it is closed with respect to finite direct sums 
of homomorphisms. But this is true since both $\Ph(\frakE)$ and $\CI$ are closed with respect to finite direct sums. Now the statement concerning the almost exact structure is obvious.

The proof for (2) can be done in the same way.
\end{proof}

The \textsl{pulback almost exact structure associated to an ideal $\CI$} in $\CA$ is  class $\mathfrak{PB}_\frakE(\CI)$ of all
triangles  obtained as homotopy pullbacks 
of triangles in $\frakE$ along maps from $\CI$; the corresonding phantom $\CA$-ideal is $\PB(\CI)=\Ph(\frakE)\CI$. 

Dually, we define \textsl{the pushout almost exact structure associated to an ideal $\CJ$} of $\CA$  denoted $\mathfrak{PO}_\frakE(\CI)$  having the corresponding phantom $\CA$-ideal 
$\PO(\CJ)=\CJ[1]\Ph(\frakE)$. The subscript $\frakE$ may be removed if no danger of confusion occurs.

We will construct two Galois correspondences between ideals and almost exact structures 
in $\CA$. 

\begin{theorem}\label{Galois-correspondences}
Let  $\CT$ be a triangulated category. We fix a full subcategory $\CA$ which is closed under extensions,
and an almost exact structure $\frakE$ in $\CA$.
 The pairs of correspondences 
  $$\mathfrak{PB}_\frakE:\mathbf{Ideals}(\CA)\rightleftarrows \mathbf{Ex}(\frakE):{\Phi}_\frakE,$$
 respectively 
 $$\mathfrak{PO}_\frakE:\mathbf{Ideals}(\CA)\rightleftarrows \mathbf{Ex}(\frakE):{\Phi}^\frakE,$$
 between the class
 $\mathbf{Ideals}(\CA)$ of all ideals in $\CA$ and the class $\mathbf{Ex}(\frakE)$ of all an almost exact structures included in $\frakE$,
 determine two monotone Galois connections with respect to inclusion.
 \end{theorem}

\begin{proof}
Let $\CI$ be an ideal in $\CA$ and let $\frakF\subseteq \frakE$ be an almost exact structure. We have to prove that 
$\mathfrak{PB}_\frakE(\CI)\subseteq \frakF$ if and only if $\CI\subseteq\PEF$.

The inclusion
$$\CI\subseteq \PEF=\{\phi\mid h\phi\in\Ph(\frakF)\hbox{ for all }h\in\Ph(\frakE)\}$$
is equivalent to $\Ph(\frakE)\CI\subseteq \Ph(\frakF)$. Since $\Ph(\frakE)\CI=\PB(\CI)$, the last inclusion is equivalent 
to $\mathfrak{PB}_\frakE(\CI)\subseteq \frakF$.

The proof for the second pair is similar.
\end{proof}

Using the standard properties of Galois connections we have the following
\begin{corollary}\label{connection-PB-phantoms}
If $\frakF$ is an almost exact structure included in $\frakE$ and $\CI$ is an ideal in $\CA$ then: 
\begin{enumerate}[{\rm (1)}] \item $\mathfrak{PB}_\frakE(\PEF)\subseteq \frakF$ and 
$\CI\subseteq{\Phi}_\frakE(\mathfrak{PB}_\frakE(\CI))$;

\item $\mathfrak{PO}_\frakE(\CPEF))\subseteq \frakF$ and 
$\CI\subseteq{\Psi}_\frakE(\mathfrak{PO}_\frakE(\CI))$.
\end{enumerate}
\end{corollary}

The following results exhibit connections between orthogonal ideals and some injective/projective properties:  
   
\begin{proposition}\label{I-perp-ex-PB}
{\rm (1)} If $\CI$ is an ideal in $\CA$ then $\CI^\perp=\mathfrak{PB}_\frakE(\CI)\inj$.

{\rm (2)} If $\CJ$ is an ideal in $\CA$ then $^\perp \CJ=\mathfrak{PO}_\frakE(\CJ)\proj$. 
\end{proposition}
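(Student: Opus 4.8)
The plan is to prove only part (1), since (2) is dual. I want to show the equality of ideals $\CI^\perp = \mathfrak{PB}_\frakE(\CI)\text{-}\mathrm{inj}$, and the natural way is to unwind both sides to a condition on compositions of phantoms. By Lemma \ref{basic-CI-inj}(b), a homomorphism $g:B\to Y$ lies in $\mathfrak{PB}_\frakE(\CI)\text{-}\mathrm{inj}$ if and only if $g[1]\,\Ph(\mathfrak{PB}_\frakE(\CI)) = 0$. By definition of the pullback weak proper class, $\Ph(\mathfrak{PB}_\frakE(\CI)) = \PB(\CI) = \Ph(\frakE)\CI$, the class of all composites $h\varphi$ with $h\in\Ph(\frakE)$ and $\varphi\in\CI$ (together with the closure operations, but these do not affect the vanishing condition since ideals are closed under the relevant operations). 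On the other side, $g\in\CI^\perp$ means $g\perp i$ for every $i\in\CI$, which by the definition of orthogonality means exactly $g[1]\,\phi\,i = 0$ for all $i\in\CI$ and all $\phi\in\Ph(\frakE)$ for which the composition is defined.

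First I would fix notation: let $g:B\to Y$ be a homomorphism in $\CA$. Second, I would spell out the forward inclusion $\CI^\perp \subseteq \mathfrak{PB}_\frakE(\CI)\text{-}\mathrm{inj}$. Suppose $g\in\CI^\perp$. To show $g$ is $\mathfrak{PB}_\frakE(\CI)$-injective it suffices, by Lemma \ref{basic-CI-inj}(b), to check $g[1]\,\chi = 0$ for every $\chi\in\Ph(\mathfrak{PB}_\frakE(\CI))$. Any such $\chi$ is (after the standard reduction) of the form $h\varphi$ with $h\in\Ph(\frakE)$, $\varphi\in\CI$; and $g[1]h\varphi = 0$ is precisely one instance of the orthogonality relation $g\perp\varphi$ that holds by hypothesis. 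For the reverse inclusion, suppose $g\in\mathfrak{PB}_\frakE(\CI)\text{-}\mathrm{inj}$ and take any $i\in\CI$ and $\phi\in\Ph(\frakE)$ with $\phi\,i$ defined; then $\phi\,i\in\Ph(\frakE)\CI = \Ph(\mathfrak{PB}_\frakE(\CI))$, so $g[1]\,\phi\,i = 0$ by Lemma \ref{basic-CI-inj}(b). Since $i\in\CI$ and $\phi\in\Ph(\frakE)$ were arbitrary, $g\perp i$ for all $i\in\CI$, i.e. $g\in\CI^\perp$.

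The only point that requires any care — and what I expect to be the main (minor) obstacle — is the identification $\Ph(\mathfrak{PB}_\frakE(\CI)) = \Ph(\frakE)\CI$ as sets of homomorphisms. By definition $\mathfrak{PB}_\frakE(\CI) = \frakD(\PB(\CI))$ with $\PB(\CI) = \Ph(\frakE)\CI$, and for a phantom $\CA$-ideal $\CE$ one has $\Ph(\frakD(\CE)) = \CE$; this is immediate from Proposition \ref{pseudo-ph-vs-wpc} since $\PB(\CI)$ is a phantom $\CA$-ideal by Lemma \ref{pull-push-ideal}(1). One should also observe that the informal description ``phantoms of homotopy pullbacks of $\frakE$-triangles along maps of $\CI$'' matches $\Ph(\frakE)\CI$ via the base-change formula $\Ph(\mathfrak{d}\varphi) = \Ph(\mathfrak{d})\varphi$ recorded in the base-cobase remark; this is essentially a restatement rather than an extra step. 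Once these identifications are in place, the proof is just the two inclusions above, each a one-line matching of quantifiers. I would present (2) as ``dual'', replacing pullbacks by pushouts, $\CI^\perp$ by ${}^\perp\CJ$, injectivity by projectivity, and invoking Lemma \ref{basic-CI-inj}(a) and Lemma \ref{pull-push-ideal}(2) in place of their counterparts.
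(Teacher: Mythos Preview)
Your proof is correct and follows essentially the same approach as the paper: both reduce the equality to the observation that $g\in\CI^\perp$ iff $g[1]\Ph(\frakE)\CI=0$, identify $\Ph(\frakE)\CI$ with $\PB(\CI)=\Ph(\mathfrak{PB}_\frakE(\CI))$, and invoke Lemma~\ref{basic-CI-inj}(b). One small notational slip: you repeatedly write ``$g\perp i$'' and ``$g\perp\varphi$'' when you mean $i\perp g$ and $\varphi\perp g$ (by definition $\CI^\perp=\{g\mid i\perp g\text{ for all }i\in\CI\}$, and $f\perp g$ means $g[1]\phi f=0$); your formulas $g[1]\phi i=0$ are correct, only the $\perp$ notation is reversed.
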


\begin{proof}
A homomorphism $j:A\to U$ is in $\CI^\perp$ if and only if $j[1]\Ph(\frakE)\CI=0$. 
But $\Ph(\frakE)\CI=\PB(\CI)$, and we apply 
Lemma \ref{basic-CI-inj} to obtain the conclusion.
\end{proof}

\begin{corollary}\label{PB-inclusions}
{\rm (1)} If $\CI$ is an ideal then $$\CI\subseteq \Phi_\frakE (\mathfrak{PB}_\frakE(\CI))\subseteq {^{\perp}(\CI^{\perp})},$$ both inclusions becoming equalities when $\CI$ is special precovering.


{\rm (2)} If $\CJ$ is an ideal then $$\CJ\subseteq \Phi^\frakE (\mathfrak{PO}_\frakE(\CJ))\subseteq ({^\perp\CJ})^{\perp},$$ both inclusions becoming equalities when $\CJ$ is special preenveloping.
\end{corollary}

\begin{proof} 
The first inclusion is 
a consequence of Corollary \ref{connection-PB-phantoms}. 


For the second inclusion, we replace in Theorem \ref{orth-F} the almost exact structure $\frakF$ by $\mathfrak{PB}(\CI)$,  
hence we have  $$\Phi_\frakE (\mathfrak{PB}_\frakE(\CI))\subseteq  {^\perp(\mathfrak{PB}(\CI)\textrm{-inj})}.$$
By Proposition \ref{I-perp-ex-PB} we have $\mathfrak{PB}(\CI)\textrm{-inj}=\CI^{\perp}$, hence 
$\Phi_\frakE (\mathfrak{PB}_\frakE(\CI))\subseteq {^{\perp}(\CI^{\perp})}.$ 
Finally if $\CI$ is special preevveloping the equality $\CI={^{\perp}(\CI^{\perp})}$ follows by Theorem \ref{cotorsion-precovering}.
\end{proof}

\begin{corollary} Let $\CI$ and $\CJ$ be ideals in $\CA$. \begin{enumerate}[{\rm (1)}]
\item If the phantom $\CA$-ideal $\PB(\CI)$ is precovering then  
$\Phi_\frakE (\mathfrak{PB}_\frakE(\CI))= {^{\perp}(\CI^{\perp})}$.

\item If the phantom $\CA$-ideal $\PO(\CJ)$ is preenveloping then  
$\Phi^\frakE (\mathfrak{PO}_\frakE(\CJ))= {({^{\perp}\CJ})^{\perp}}$.
\end{enumerate}
\end{corollary}

\begin{proof} 
By Theorem \ref{Th-procov-vs-Iinj} we obtain that 
there are enough $\mathfrak{PB}(\CI)$-injective homomorphisms. Using Proposition \ref{I-perp-ex-PB} and 
Theorem \ref{orth-F} we have  
$${^{\perp}(\CI^{\perp})}={^\perp (\mathfrak{PB}(\CI)\inj)}={\Phi}_\frakE(\mathfrak{PB}(\CI)),$$
and the proof is complete.
\end{proof}

\begin{corollary}\label{phi(F)-precovering}
{\rm (1)} Let us suppose that there are enough $\frakE$-projective homomorphisms and there are enough 
$\frakF$-injective homomorphisms. Then $\Phi_\frakE(\frakF)$ is a special precovering ideal.

{\rm (2)} Suppose that there are enough $\frakE$-injective homomorphisms and there are enough 
$\frakF$-projective homomorphisms. Then $\Psi_\frakE(\frakF)$ is a special preenveloping ideal.
\end{corollary}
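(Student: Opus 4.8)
The plan is to deduce Corollary~\ref{phi(F)-precovering} from Salce's Lemma (Theorem~\ref{salce-lemma}) together with the orthogonality results for $\Phi_\frakE(\frakF)$ and $\Psi_\frakE(\frakF)$. As usual it suffices to prove~(1), since~(2) is dual. So assume there are enough $\frakE$-projective homomorphisms and enough $\frakF$-injective homomorphisms; we want $\Phi_\frakE(\frakF)$ to be a special precovering ideal.

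\medskip

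The key observation is that $\Phi_\frakE(\frakF)$ can be realised as a left orthogonal of an ideal to which Salce's Lemma applies. Indeed, by Theorem~\ref{orth-F}(1), since there are enough $\frakF$-injective homomorphisms we have ${^{\perp}(\frakF\textrm{-}\mathrm{inj})}=\Phi_\frakE(\frakF)$. So it is enough to show that ${^{\perp}\CJ}$ is a special precovering ideal, where $\CJ=\frakF\textrm{-}\mathrm{inj}$. By Theorem~\ref{salce-lemma}(2), this will follow provided we can show that $\CJ=\frakF\textrm{-}\mathrm{inj}$ is a preenveloping ideal (we already have enough $\frakE$-projective homomorphisms by assumption). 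The main step is therefore to verify that $\frakF\textrm{-}\mathrm{inj}$ is preenveloping.

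\medskip

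For this I would argue as follows. Since $\frakF\subseteq\frakE$ and there are enough $\frakF$-injective homomorphisms, Theorem~\ref{Th-procov-vs-Iinj}(1) tells us that $\Ph(\frakF)$ is a precovering phantom $\CA$-ideal; that is, every object $C[1]$ of $\CA[1]$ has a $\Ph(\frakF)$-precover. Fix an object $A\in\CA$ and let $\phi:C\to A[1]$ be a $\Ph(\frakF)$-precover for $A[1]$. Complete $\phi$ to an $\frakF$-triangle $A\overset{a}\to E\to C\overset{\phi}\to A[1]$. By Lemma~\ref{Icov-vs-Iinjenv}(1) (applied with the weak proper class $\frakF$ in place of $\frakE$), the fact that $\phi$ is a $\Ph(\frakF)$-precover for $A[1]$ means that $a$ is $\frakF$-injective; moreover the same lemma gives that $a$ is an $\frakF\textrm{-}\mathrm{inj}$-preenvelope of $A$. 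Since $A$ was arbitrary, $\frakF\textrm{-}\mathrm{inj}$ is a preenveloping ideal, as required.

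\medskip

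Putting the pieces together: $\CJ=\frakF\textrm{-}\mathrm{inj}$ is preenveloping, and there are enough $\frakE$-projective homomorphisms, so Theorem~\ref{salce-lemma}(2) yields that ${^{\perp}\CJ}$ is a special precovering ideal; and ${^{\perp}\CJ}={^{\perp}(\frakF\textrm{-}\mathrm{inj})}=\Phi_\frakE(\frakF)$ by Theorem~\ref{orth-F}(1)(b). Hence $\Phi_\frakE(\frakF)$ is special precovering. The step I expect to be the crux is the identification of $\frakF\textrm{-}\mathrm{inj}$ as a preenveloping ideal via the $\Ph(\frakF)$-precover of $A[1]$; everything else is a bookkeeping application of results already in hand. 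A mild point to be careful about is that Lemma~\ref{Icov-vs-Iinjenv} and Theorem~\ref{Th-procov-vs-Iinj} are being applied to the subclass $\frakF$ rather than to $\frakE$, which is legitimate since $\frakF$ is itself a weak proper class of triangles in $\CA$; one just has to keep track that "enough $\frakF$-injective homomorphisms'' is exactly the hypothesis those results need for $\frakF$.
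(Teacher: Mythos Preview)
Your proof is correct and follows essentially the same approach as the paper: identify $\Phi_\frakE(\frakF)={^{\perp}(\frakF\textrm{-}\mathrm{inj})}$ via Theorem~\ref{orth-F}(1)(b), observe that $\frakF\textrm{-}\mathrm{inj}$ is preenveloping, and conclude by Salce's Lemma. The only difference is that the paper states ``$\frakF\textrm{-}\mathrm{inj}$ is a preenveloping ideal'' without comment, whereas you supply the justification via Theorem~\ref{Th-procov-vs-Iinj} and Lemma~\ref{Icov-vs-Iinjenv}; this extra detail is fine and your care about applying those results to $\frakF$ rather than $\frakE$ is well placed.
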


\begin{proof}
By Theorem \ref{orth-F} we know that $\Phi_\frakE(\frakF)={^\perp \frakF\inj}$. But $\frakF\inj$ is a preenveloping ideal, 
hence we can apply Theorem \ref{salce-lemma} to obtain the conclusion.
\end{proof}

\subsection{Complete ideal cotorsion pairs}

In order to characterize the ideal cotorsion pairs which are complete, we will study first the existence of some special injective (projective) preenvelopes (precovers).

From the proof of Proposition \ref{subclasses-enough-inj} we can deduce that if $\frakH\subseteq \frakE$ are almost exact structures 
with enough injective homomorphisms then every $\frakH$-injective $\frakH$-inflation can be obtained as a pullback of an $\frakE$-triangle along
a suitable homomorphism from $\CA$. It is useful to consider some special $\frakH$-injective $\frakH$-inflations. 

An $\frakH$-injective homomorphism $e$ is \textsl{special \wrt $\frakE$} 
if it can be embedded in a homotopy pushout diagram 
\[ \xymatrix{& A\ar[r]^{e}\ar@{=}[d] & C\ar[d]\ar[r]& X\ar[d]^{\varphi}\ar[r] & A[1]\ar@{=}[d]\\
	      \mathfrak{d}: & A\ar[r]      & B\ar[r]      & Y\ar[r]           & A[1],} \]
such that $\mathfrak{d}\in\frakE$ and $\varphi\in\Phi_\frakE(\frakH)$. 
The notion of \textsl{special $\frakH$-projective homomorphism} is defined dually.

\begin{example}\label{exemple-special-inj}
From Corollary \ref{PB-inclusions} and Proposition \ref{I-perp-ex-PB} we observe that 
{ if $\CI$ is special precovering} then every special $\CI^\perp$-preenvelope is a special $\mathfrak{PB}_\frakE(\CI)$-injective homomorphism.
\end{example}

\begin{proposition}\label{special-injective}
Let $\frakH\subseteq \frakE$ be an almost exact structures. Then 
\begin{enumerate}[{\rm (1)}] 
\item Every special $\frakH$-injective homomorphism { is a special $\frakH\text{-{\rm inj}}$-preenvelope} and 
a special $\Phi_\frakE(\frakH)^{\perp}$-preenvelope.

\item Every special $\frakH$-projective homomorphism is a special $\frakH\text{-{\rm proj}}$-precover 
and a special $^\perp\Phi^\frakE(\frakH)$-precover.
\end{enumerate}
\end{proposition}

\begin{proof}
{Using Theorem \ref{orth-F} we observe that $\Phi_\frakE(\frakH)\subseteq{^\perp}\frakH\textrm{-inj}$,
 hence every special $\frakH$-injective homomorphism is a special $\frakH\textrm{-inj}$-preenvelope.}

Let $e$ be a special $\frakH$-injective homomorphism. 
By Corollary \ref{connection-PB-phantoms} we have the inclusion $\mathfrak{PB}_\frakE(\Phi_\frakE(\frakH))\subseteq \frakH$, 
hence we can apply 
Proposition \ref{I-perp-ex-PB} to obtain 
$$e\in \frakH\textrm{-inj}\subseteq \mathfrak{PB}_\frakE(\Phi_\frakE(\frakH))\textrm{-inj}=\Phi_\frakE(\frakH)^{\perp}.$$
Since $\Phi_\frakE(\frakH)\subseteq {^\perp (\Phi_\frakE(\frakH)^\perp)}$ we can apply the definition to obtain that $e$ is a 
special $\Phi_\frakE(\frakH)^\perp$-preenvelope.
\end{proof}

The following result improves Theorem \ref{orth-F}: 

\begin{theorem}\label{Iperp=injective} Let $\frakH\subseteq \frakE$ be an almost exact structures.
\begin{enumerate}[{\rm (1)}]
\item If there are enough special $\frakH$-injective homomorphisms then 
$$(\Phi_\frakE(\frakH),\frakH\textrm{-}\mathrm{inj})$$ is a  cotorsion pair 
{which is complete if $\frakE$ has enough projective homomorphisms.}

\item If there are enough special $\frakH$-projective homomorphisms then 
$$(\frakH\textrm{-}\mathrm{proj}, \Phi^\frakE(\frakH))$$ is an ideal cotorsion pair
{which is complete if $\frakE$ has enough injective homomorphisms.}
\end{enumerate}
\end{theorem}

\begin{proof}
Since we have enough special $\frakH$-injective homomorphisms, it follows that the ideal $\frakH\text{-inj}$ is 
a special preenveloping ideal and there are enough $\frakH$-injective homomorphisms. 
Then we can use Theorem \ref{orth-F} to obtain ${^{\perp} \frakH\textrm{-}\mathrm{inj}}= 
\Phi_\frakE(\frakH)$.
Now the conclusions are 
consequences of Theorem \ref{cotorsion-precovering}.
\end{proof}

We can characterize ideal cotorsion pairs in the case when we have enough $\frakE$-injective $\frakE$-inflations
and $\frakE$-projective $\frakE$-deflations. 

\begin{theorem}\label{mainthA} 
Let $\frakE$ be an almost exact structure such that there are enough $\frakE$-injective homomorphisms
and $\frakE$-projective homomorphisms, and let $(\CI,\CJ)$ be an ideal cotorsion pair. 

The following are equivalent:
\begin{enumerate}[{\rm (a)}]
 \item $\CI$ is precovering;
 \item $\CI$ is special precovering.
 \item $\CJ$ is preenveloping;
 \item $\CJ$ is special preenveloping.
\item There exists an almost exact structure $\frakH\subseteq \frakE$ with enough special injective homomorphisms such that $\CI=\mathrm{\Phi}_\frakE(\frakH)$;
\item There exists an almost exact structure $\frakF\subseteq \frakE$ with enough injective homomorphisms such that $\CI=\PEF$;
\item There exists an almost exact structure $\frakH\subseteq \frakE$ with enough special injective homomorphisms such that 
$\CJ=\frakH\inj$;
\item There exists an almost exact structure $\mathfrak{G}\subseteq \frakE$ with enough special projective homomorphisms such that $\CJ=\CPEG$;
\item There exists an almost exact structure $\mathfrak{F}\subseteq \frakE$ with enough projective homomorphisms such that $\CJ=\CPEF$;

\item There exists an almost exact structure $\mathfrak{H}\subseteq \frakE$ with enough special projective homomorphisms such that 
$\CI=\mathfrak{H}\proj$;
\end{enumerate}
\end{theorem}
 
\begin{proof}
The equivalences (a)$\Leftrightarrow$(b)$\Leftrightarrow$(c)$\Leftrightarrow$(d) are from Theorem \ref{salce-lemma}.  

The implications (f)$\Rightarrow$(b) and (h)$\Rightarrow$(d) are proved in Corollary \ref{phi(F)-precovering}. 
Finally, (b)$\Rightarrow$(e)
and (d)$\Rightarrow$(h) are in Corollary \ref{PB-inclusions} and Example \ref{exemple-special-inj}, while the equivalences (e)$\Leftrightarrow$(g) and (h)$\Leftrightarrow$(j) are obtained from Theorem \ref{Iperp=injective}.
\end{proof}

In the following example we will see that in general the almost exact structures $\frakF$ and $\frakH$ from the above theorem are not the same.

\begin{example}
Let $\CA$ the category $\Modr \Z$ of all abelian groups, $\frakE$ the class of all short exact sequences in $\Modr \Z$, and $\frakF$ the class of all pure exact sequences in $\Modr \Z$.

If $\CJ=\CPEF$, by Theorem \ref{orth-F} we have that $\frakF\proj\subseteq {^\perp\CPEF}$. It follows by Corollary \ref{Toda-perp-cor-1} that $\langle \frakE\proj,\frakF\proj\rangle_\frakE\subseteq \langle \frakE\proj, {^\perp\CPEF}\rangle_\frakE={^\perp\CPEF}$. If we suppose that in (j) we have $\mathfrak{H}=\frakF$ then we obtain that every extension of a pure projective abelian group by a projective abelian group is pure projective. 
However, it is easy to see that if we consider the group 
$$G=\left\{\frac{m}{n}\mid m,n\in\Z \textrm{ and } n \textrm{ is square free}\right\}\leq \Q$$ then $G/\Z\cong \oplus_{p\textrm{ is prime}}\Z/pZ$ is pure projective. But $G$ is not pure-projective, hence there exists there exists non pure-projective abelian groups which are extensions of pure-projectives by projectives. It follows that $\mathfrak{H}\neq \mathfrak{F}$.
\end{example}


\section{Applications}

\subsection{Projective classes}

We recall from \cite[Definition 2.5 and Proposition 2.6]{Ch-98} that \textsl{a projective class} in $\CT$ is a pair $(\CP,\CJ)$ where 
$\CP$ is a class of objects and $\CJ$ is an ideal in $\CT$ such that 
\[\CP=\{P\in\CT\mid \CT(P,\phi)=0\hbox{ for all }\phi\in\CJ\},\] 
\[\CJ=\{\phi\in\CT^\to\mid \CT(P,\phi)=0\hbox{ for all }P\in\CP\},\]
and every $X\in\CT$ lies in a triangle $P\to X\overset{\phi}\to Y\to P[1]$, with $P\in\CP$ and $\phi\in\CJ$.
As in \cite[Section 3]{Ch-98}, we consider the case when $\CP$ and $\CJ$ are suspension closed.

\begin{proposition}\label{cor-projective-class}
If $(\CP,\CJ)$ is a projective class such that $\CP$ and $\CJ$ are suspension closed and $\CI=\Ideal\CP$, then $(\CI,\CJ)$ is a complete ideal cotorsion pair with respect to $\frakD$.

Conversely, if $\CQ$ is a class of objects closed with respect to  direct sums such that $(\Ideal\CQ,\CJ)$ is a (special) cotorsion 
pair with respect to  $\frakD$ then $(\mathrm{add}(\CQ),\CJ)$ is a projective class.
\end{proposition}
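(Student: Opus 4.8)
The plan is to unwind the two directions of \ref{cor-projective-class} using the dictionary between object ideals and classes of objects (Example \ref{exemple-obiect-ideale}) together with the fact that for $\CA=\CT$ and $\frakE=\frakD$ every precovering ideal is special (Remark \ref{trivial-case-special}) and so $(\CI,\CI^\perp)$ is automatically a complete ideal cotorsion pair (Example \ref{ex-trivial}). First I would observe that, by Example \ref{fantome-clasice-gen} applied to the suspension-closed class $\CP$, one has $\Ph(\frakE_\CP)=\{\phi\mid \CT(P,\phi)=0 \text{ for all }P\in\CP\}=\CJ$; here the hypothesis that $\CP=\mathcal{H}$ is closed under suspensions is exactly what makes $\frakE_\CP$ a (saturated) weak proper class and $\CJ$ an honest ideal that is suspension closed. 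Thus the defining triangles $P\to X\overset{\phi}\to Y\to P[1]$ of the projective class are precisely $\frakE_\CP$-triangles whose phantom lies in $\CJ$.

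For the direct implication I would argue that $\CI=\Ideal\CP$ is a precovering ideal: given $X\in\CT$, take the triangle $P\to X\to Y\to P[1]$ from the projective class; the composite $P\to X$ is a map in $\CI$, and any $i':P'\to X$ with $P'\in\CP$ satisfies $\phi i'\in\CT(P',Y)$ lying in the image of $\CT(P',\phi)=0$ when $\phi\in\CJ$... more directly, since $\CT(P',\phi)=0$ the map $\phi i'=0$, so $i'$ factors through the weak kernel $P\to X$ of $\phi$; composing with a retraction is not needed because $\Ideal\CP$ only asks the map to factor through \emph{some} object of $\CP$, and $P\in\CP$. Hence $P\to X$ is an $\CI$-precover, so $\CI$ is precovering, and by Remark \ref{trivial-case-special} and Theorem \ref{cotorsion-precovering}(1) the pair $(\CI,\CI^\perp)$ is a complete ideal cotorsion pair with respect to $\frakD$. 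It remains to identify $\CI^\perp$ with $\CJ$: for $\CA=\CT$, $\frakE=\frakD$, Example on orthogonality (item (c)) gives $i\perp g$ iff $g[1]\phi i=0$ for all homomorphisms $\phi$, i.e. iff $\CT(i,g[1])=0$; one checks that $g\in(\Ideal\CP)^\perp$ precisely when $\CT(g[1]\circ(-))$ kills every map out of an object of $\CP$, which by completing $g$ to a triangle $B\to Y\overset{g}\to C\overset{\psi}\to B[1]$ amounts to $\CT(P,\psi)=0$ for all $P\in\CP$, i.e. $\psi\in\CJ$; and $g\in\CJ$-inj-type condition translates back. The cleanest route is: $g\in(\Ideal\CP)^\perp \iff g[1]\,\CT^\to\, \Ph(\frakD)\,\Ideal\CP=0$, and since $\Ph(\frakD)$ is all maps $A\to A'[1]$, this says $g[1]\circ(\text{anything})\circ(\text{factoring through }\CP)=0$, equivalently $\CT(P,\operatorname{cone-map of }g)=0$, which is the defining condition of $\CJ$.

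For the converse, suppose $\CQ$ is closed under direct sums and $(\Ideal\CQ,\CJ)$ is a (special) cotorsion pair with respect to $\frakD$. By Example \ref{ex-trivial} this means that for every $X\in\CT$ there is a triangle $\mathfrak{d}_X:P\to X\to Y\to P[1]$ with $P\to X$ in $\Ideal\CQ$ and $Y\to P[1]\in\CJ$; factoring $P\to X$ through an object of $\CQ$ and using a direct summand argument, one can replace $P$ by an object of $\operatorname{add}(\CQ)$ (indeed by an object of $\CQ$ itself up to summands, and $\operatorname{add}(\CQ)$ is closed under summands and finite sums), so that the defining triangles of a projective class are available with first term in $\operatorname{add}(\CQ)$. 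Then I would verify the two closure/orthogonality equalities defining a projective class: put $\CP'=\{P\mid \CT(P,\phi)=0\ \forall\phi\in\CJ\}$; clearly $\operatorname{add}(\CQ)\subseteq\CP'$ since $\CJ=(\Ideal\CQ)^\perp$ kills all maps out of $\CQ$ (and out of summands and finite sums); conversely if $P\in\CP'$, take its defining triangle with $Q_P\to P$ in $\operatorname{add}(\CQ)$ and $\psi_P\in\CJ$; then $\CT(P,\psi_P)=0$ forces $1_P$ to factor through $Q_P\to P$, exhibiting $P$ as a summand of an object of $\operatorname{add}(\CQ)$, hence $P\in\operatorname{add}(\CQ)$. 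The dual equality $\CJ=\{\phi\mid\CT(\operatorname{add}(\CQ),\phi)=0\}$ is immediate from $\CJ=(\Ideal\CQ)^\perp$ unwound as above.

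The main obstacle I expect is purely bookkeeping: correctly translating the ideal-theoretic orthogonality $(\Ideal\CX)^\perp$ for $\frakE=\frakD$ into the object-level statement $\{\phi\mid\CT(X,\operatorname{cone}\phi)=0\}$ and back, and being careful about the $\operatorname{add}$ versus raw class $\CQ$ distinction (summands appear exactly because $1_P$ factoring through $Q\to P$ splits it off). No deep input is needed beyond Example \ref{ex-trivial}, Remark \ref{trivial-case-special}, Theorem \ref{cotorsion-precovering}, and Example \ref{fantome-clasice-gen}; the content is that a projective class is the same data as a complete ideal cotorsion pair with respect to $\frakD$ whose left ideal is an object ideal generated by a (suspension-closed) class of objects.
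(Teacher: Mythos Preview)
Your overall strategy matches the paper's: for the forward direction both rely on Example~\ref{ex-trivial} (precovering ideals yield complete cotorsion pairs when $\frakE=\frakD$), and for the converse both factor the special precover through an object of $\CQ$ to produce the defining triangles of a projective class; the paper does this by starting from a special $\CJ$-preenvelope and taking its cocone, but this amounts to the same construction.

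However, there is a genuine gap in your identification $\CI^\perp=\CJ$ and in the analogous orthogonality checks in the converse, all coming from the same shift issue. Unwinding the definition, $g\in(\Ideal\CP)^\perp$ with respect to $\frakD$ means $g[1]\phi i=0$ for all $\phi$ and all $i\in\Ideal\CP$, which is equivalent to $\CT(P,g[1])=0$ for every $P\in\CP$, i.e.\ $g[1]\in\CJ$ --- not $g\in\CJ$, and not ``$\CT(P,\psi)=0$ for the cone map $\psi$ of $g$''. Your cone-map route does not work: applying $\CT(P,-)$ to the triangle $B\to Y\overset{g}\to C\overset{\psi}\to B[1]$ shows that $\CT(P,g[1])=0$ and $\CT(P,\psi)=0$ are distinct exactness conditions. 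The correct finish uses precisely the hypothesis you flagged but did not deploy: since $\CJ$ is suspension closed, $g[1]\in\CJ\iff g\in\CJ$, whence $\CI^\perp=\CJ$.

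The same shift issue infects your converse. Your claim that ``$\CJ=(\Ideal\CQ)^\perp$ kills all maps out of $\CQ$'' asserts $\CT(Q,\phi)=0$ for $\phi\in\CJ$, but the perpendicularity only gives $\CT(Q,\phi[1])=0$. Here suspension-closedness is \emph{not} assumed, so a different argument is needed --- and this is exactly where the triangles you built earn their keep. Every $\phi\in\CJ$ factors through the special $\CJ$-preenvelope $j_B$ of its domain $B$; the cocone $L\to B$ of $j_B$ is an $\Ideal\CQ$-precover, so any $Q\to B$ factors through $L\to B$ and hence $j_B\circ(Q\to B)=0$, giving $\CT(Q,\phi)=0$. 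This (together with your correct splitting argument for $\CP'\subseteq\mathrm{add}(\CQ)$) verifies both defining equalities of the projective class. One small slip: in the triangle $P\to X\to Y\to P[1]$ coming from property~$(*)$, it is the map $X\to Y$ that lies in $\CJ$, not $Y\to P[1]$.
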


\begin{proof}
The first statement follows from Example \ref{ex-trivial}.

For the second statement, let $\CQ$ be a class closed with respect to  finite direct sums. For every object $A$ we fix a special 
$\CJ$-preenvelope $j:A\to Y$. By Corollary \ref{salce-lemma-cor},  via the commutative 
diagram (constructed as in Example \ref{trivial-case-special})
\[\xymatrix{A[-1]\ar[d]^{j[-1]}\ar[r] & 0\ar[d]\ar[r]  & A\ar@{=}[d]\ar@{=}[r] & A\ar[d]^{j}\\
	    Y[-1]\ar[r]                & L\ar[r]^{i}& A\ar[r]^{j}         & Y,}
\] 
we obtain that the cocone $i:L\to A$ of $j$ is a special $\Ideal\CQ$-precover. Since 
$i$ factorizes through an object from $\CQ$, there exists $Q\in \CQ$ and a commutative diagram
\[\xymatrix{L\ar[d]\ar[r]^{i} & A\ar@{=}[d]\ar[r]^{j}  & Y\ar[d]\ar[r] & L[1]\ar[d]\\
	    Q\ar[r]                & A\ar[r]^{\varphi} & T\ar[r]         & Q[1].}
\] 
 Since $\varphi\in\CJ$, we apply the remarks stated in Example \ref{exemple-obiect-ideale} to conclude the proof.
\end{proof}

In particular we obtain Christensen's Ghost Lemma. 

\begin{example}
Let $(\CP,\CJ)$ and $(\CQ, \CL)$ be two projective classes in $\CT$. From Proposition \ref{cor-projective-class}
we know that $(\Ideal\CP, \CJ)$ and $(\Ideal\CQ,\CL)$ are complete ideal cotorsion pairs with respect to  the proper 
class $\frakD$ of all triangles in $\CT$. Then $(\langle \Ideal\CQ,\Ideal\CP\rangle_\frakD, \CJ\CL)$ is a complete ideal cotorsion pair.
By Proposition \ref{lem-toda-obiecte} we know that $\langle \Ideal\CQ,\Ideal\CP\rangle_\frakD=\Ideal\CV$, where $\CV$ is 
the class of all objects $V$ which lie in triangles $Q\to V\to P\to P[1]$ 
with $P\in\CP$ and $Q\in\CQ$. Applying again Proposition \ref{cor-projective-class}, it follows that 
$(\Ob(\Ideal\CV),\CJ\CL)$ is a projective class, and it is easy to see that $X\in \Ob(\Ideal\CV)$ if and only if $X$ is a direct summand 
of an object from $\CV$, hence $$(\mathrm{add}(\CV),\CJ\CL)$$ is a projective class.
\end{example}

\begin{remark}
Dually, we can consider injective classes $(\CI,\CQ)$, and the duals of above results are also valid. For 
the case when $\CT$ is a $k$-category ($k$ is a field) and the homomorphisms groups $\CT(A,B)$ are finitely dimensional for all objects 
$A$ and $B$ in $\CT$ then it is easy to see that for every object $A$ in $\CT$ the class $\aadd(A)$ is precovering and preenveloping. Therefore
it induces a projective class $(\aadd(A),\CJ)$ and an injective class $(\CI,\aadd(A))$. Here the homomorphisms from $\CJ$ (resp. $\CI$) are
 called $\CA$-ghosts (co-ghosts). A direct application of (co-)Ghost Lemma \ref{ghost-complete} and Proposition \ref{cor-projective-class}
lead us to the Ghost/Co-ghost Lemma and Converse proved in \cite[Lemma 2.17]{orlv-spc}.    
\end{remark}

Moreover, we have the following \textsl{dual of Christensen's Ghost Lemma}:

\begin{corollary}\label{clase-proj-telescope}
Let $(\CP,\CJ)$ and $(\CQ,\CK)$ be two projective classes in $\CT$, and denote by $\overline{\CT}(\CP,\CQ)$ the ideal of all homomorphisms which 
factorize through a homomorphism $P\to Q$ with $P\in \CP$ and $Q\in \CQ$. Then the pair 
$$(\overline{\CT}(\CP,\CQ),\langle \CJ,\CK\rangle_\frakD)$$
is an ideal cotorsion pair with respect to the class of all triangles in $\CT$. 
\end{corollary}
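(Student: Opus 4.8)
The plan is to recognize Corollary \ref{clase-proj-telescope} as the ideal-theoretic translation of the preceding example, obtained by combining Proposition \ref{cor-projective-class} with the Ghost-Lemma machinery of Theorem \ref{perp-product} (and its dual). First I would set $\CI=\Ideal\CP$ and $\CK_0=\Ideal\CQ$, so that by the first part of Proposition \ref{cor-projective-class} the pairs $(\CI,\CJ)$ and $(\CK_0,\CK)$ are complete ideal cotorsion pairs with respect to $\frakD$ (here I use that $\CP$, $\CQ$, $\CJ$, $\CK$ are suspension closed). Since for $\CA=\CT$ and $\frakE=\frakD$ every ideal is special precovering and special preenveloping (Remark \ref{trivial-case-special}), all the hypotheses about enough $\frakD$-injective and $\frakD$-projective homomorphisms are automatic (Example \ref{ex-trivial}).

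Next I would invoke the dual of Theorem \ref{perp-product}: for special preenveloping ideals $\CJ$ and $\CK$ the product $\CK\CJ$ is special preenveloping and ${}^\perp(\CK\CJ)=\langle \CJ,\CK\rangle_\frakD$ — this is exactly item (3)(a) of Corollary \ref{ghost-complete}, which the excerpt states for ideal cotorsion pairs with enough injectives and projectives. Applying it to the pairs $(\CI,\CJ)$ and $(\CK_0,\CK)$ gives that $\langle\CJ,\CK\rangle_\frakD$ is a special precovering ideal whose perpendicular is $\CK\CJ$, and by Theorem \ref{cotorsion-precovering} the pair $\bigl(\langle\CJ,\CK\rangle_\frakD,\; \langle\CJ,\CK\rangle_\frakD^\perp\bigr)=\bigl(\langle\CJ,\CK\rangle_\frakD,\CK\CJ\bigr)$ is a complete ideal cotorsion pair with respect to $\frakD$. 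It remains to identify this with the asserted pair, i.e. to show ${}^\perp(\CK\CJ)=\overline{\CT}(\CP,\CQ)$; equivalently, by Corollary \ref{ghost-complete}(3)(a), that $\langle\Ideal\CP,\Ideal\CQ\rangle_\frakD=\overline{\CT}(\CP,\CQ)$.

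For that last identification I would apply Proposition \ref{lem-toda-obiecte} with $\CP$ in the role of ``$\CQ$'' and $\CQ$ in the role of ``$\CP$'': it yields $\langle \Ideal\CP,\Ideal\CQ\rangle_\frakD=\Ideal\CV$, where $\CV$ is the class of objects $V$ sitting in a triangle $P\to V\to Q\to P[1]$ with $P\in\CP$, $Q\in\CQ$; here I should double-check the orientation of the Toda bracket, since $\langle\CJ,\CK\rangle_\frakD$ pairs a map injective relative to a triangle (an ``$\CJ$-part'') with one projective relative to it, and tracing through the dual of Theorem \ref{perp-product} fixes which of $\CP,\CQ$ lands on which side. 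A homomorphism lies in $\Ideal\CV$ iff it factors through such a $V$; since a triangle $P\to V\to Q\to P[1]$ exhibits $V$ as an extension of $Q$ by $P$, any map $P\to Q$ between objects of $\CP$ and $\CQ$ factors through the corresponding $V$ (take $V=P\oplus Q$ via the split triangle, which lies in $\frakD$), and conversely a map through $V$ factors as $\CT(-,P)\to\CT(-,V)\to\CT(-,Q)$-type composite through a map $P\to Q$ after precomposing and postcomposing appropriately — this is the content already spelled out in the converse half of the proof of Proposition \ref{lem-toda-obiecte}. Thus $\Ideal\CV=\overline{\CT}(\CP,\CQ)$, finishing the proof.

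The main obstacle I anticipate is purely bookkeeping: getting the variance of the Toda bracket $\langle -,-\rangle_\frakD$ right throughout (which argument is the ``$\mathcal K$'' and which the ``$\mathcal L$'' in Lemma \ref{lema-toda-env} and Theorem \ref{perp-product}), so that the pair one extracts from the dual statements is literally $(\overline{\CT}(\CP,\CQ),\langle\CJ,\CK\rangle_\frakD)$ and not its mirror image. Once the orientations are pinned down, every step is an instance of a result already proved in the excerpt, so no new technical work is needed.
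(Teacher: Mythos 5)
You correctly identify the two ingredients --- Proposition \ref{cor-projective-class} to turn each projective class into a complete ideal cotorsion pair with respect to $\frakD$, and Corollary \ref{ghost-complete}(3) (the Ghost Lemma package) to produce a new pair from a product and a Toda bracket --- and you correctly flag that the only thing that can go wrong is the bookkeeping of variances. Unfortunately the bookkeeping \emph{did} go wrong, and the patch you attempt for it does not hold.

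Concretely: you fix the abstract pairs in Corollary \ref{ghost-complete}(3) as $(\CI,\CJ)=(\Ideal\CP,\CJ)$ and $(\CK,\CL)=(\Ideal\CQ,\CK)$. With that labeling, item (3)(a) gives ${}^\perp(\CL\CJ)=\langle\CI,\CK\rangle_\frakD$, i.e.\ ${}^\perp(\CK\CJ)=\langle\Ideal\CP,\Ideal\CQ\rangle_\frakD$, \emph{not} ${}^\perp(\CK\CJ)=\langle\CJ,\CK\rangle_\frakD$ as you write; and the left-hand ideal of the new cotorsion pair is the Toda bracket $\langle\Ideal\CP,\Ideal\CQ\rangle_\frakD$, not the product $\overline{\CT}(\CP,\CQ)=\Ideal\CQ\,\Ideal\CP$. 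You then try to close the gap by claiming $\langle\Ideal\CP,\Ideal\CQ\rangle_\frakD=\overline{\CT}(\CP,\CQ)$. That equality is false in general. Proposition \ref{lem-toda-obiecte} says $\langle\Ideal\CP,\Ideal\CQ\rangle_\frakD=\Ideal\CV$ where $\CV$ is the class of objects fitting in a triangle $P\to V\to Q\to P[1]$; the inclusion $\overline{\CT}(\CP,\CQ)\subseteq\Ideal\CV$ is easy (via the split triangle $P\to P\oplus Q\to Q\to P[1]$), but the reverse inclusion is exactly what is \emph{not} true: a homomorphism factoring through an extension $V$ of $Q$ by $P[-1]$ need not factor through any map $P'\to Q'$ with $P'\in\CP$, $Q'\in\CQ$. (For instance, with $\CP=\CQ$ the projectives of a ring concentrated in degree $0$ in $\mathbf{D}(R)$, the identity of a non-projective cyclic module $R/(x)$ factors through the two-term complex $R\overset{x}{\to}R\in\CV$, yet cannot factor through a map of projectives.) The ``converse half'' of the proof of Proposition \ref{lem-toda-obiecte} only shows $1_V\in\langle\Ideal\CQ,\Ideal\CP\rangle_\frakE$ for $V\in\CV$; it does not say that maps through $V$ factor through a $P\to Q$.

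The fix is just to flip the labeling: take $(\CI,\CJ)=(\Ideal\CQ,\CK)$ and $(\CK,\CL)=(\Ideal\CP,\CJ)$ in Corollary \ref{ghost-complete}(3). Then $\CI\CK=\Ideal\CQ\,\Ideal\CP$, and the elementary identity $\Ideal\CQ\,\Ideal\CP=\overline{\CT}(\CP,\CQ)$ (a composite of a map landing in $\CP$ followed by a map factoring through $\CQ$ is precisely a map factoring through some $P\to Q$, and conversely) gives $(\overline{\CT}(\CP,\CQ))^\perp=\langle\CL,\CJ\rangle_\frakD=\langle\CJ,\CK\rangle_\frakD$ and $\overline{\CT}(\CP,\CQ)={}^\perp\langle\CJ,\CK\rangle_\frakD$ in one step, with no appeal to Proposition \ref{lem-toda-obiecte} at all. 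This is how the paper argues.
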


\begin{proof}
This follows from Corollary \ref{ghost-complete} since $\Ideal\CQ\Ideal\CP=\overline{\CT}(\CP,\CQ)$.
\end{proof}
}


\subsection{Krause's telescope theorem for projective classes}

We will apply the previous results to extend \cite[Proposition 4.6]{Krause-tel} to projective classes in compactly generated triangulated 
categories. In order to do this we will use the following

\begin{setup}\label{setup} Let $\CT$ be a compactly generated category and denote by $\CT_0$ a representative set of compact objects in $\CT$. Then this induces
 a projective class $(\CP, \mathcal{P}h)$, where $\CP=\Add(\CT_0)$ is the class of 
 pure-projective objects in $\CT$ and $\mathcal{P}h$ is the 
class of (classical) phantoms in $\CT$. We consider 
the almost exact structure $\frakF=\frakF_\CP$ of all pure triangles in $\CT$, i.e. 
$\Ph(\frakF)=\mathcal{P}h$.
We also fix   a projective class $(\CB,\CJ)$, and 
we make the following remarks and notations: 
\begin{enumerate}
                  \item 
                  the ideal $\overline{\CT}(\CB,\CP)$ of all 
homomorphisms which factorize through homomorphisms $B\to P$ with $B\in \CB$ and $P\in \CP$ 
is a precovering ideal; 
\item  as in Example 
\ref{ex-constr-suf-proj} we consider the weak proper 
class $\frakE(\CB)=\frakE_{\overline{\CT}(\CB,\CP)}$ of all triangles such that all elements in 
$\overline{\CT}(\CB,\CP)$ are projective with respect to these triangles; 
\item  
it is easy to see that
$\frakF\subseteq \frakE(\CB)$, so we can consider the class $\mathbf{\Phi}_{\frakE(\CB)}(\frakF)$ 
of all relative $\frakF$-phantoms
associated to $\frakE(\CB)$;
\item we denote 
by $\CI_\CB$ the set of all homomorphisms between compact objects which factorize through an object $B\in \CB$, and 
\item we consider the ideal
$$\overline{\CI}_\CB=\Ideal{\Add(\CT_0)}\Ideal\CB\Ideal{\Add(\CT_0)},$$ i.e. the ideal generated by class of all homomorphisms between 
pure-projective objects which factorize through an object $B\in \CB$. 
\end{enumerate}
\end{setup}

The relative $\frakF$-phantoms associated to $\frakE(\CB)$ can be characterized in the following way:

\begin{lemma}\label{lema-fantome-relative-pc}
The following are equivalent for a homomorphism $\varphi:A\to B$:
\begin{enumerate}[{\rm (1)}]
\item $\varphi\in \mathbf{\Phi}_{\frakE(\CB)}(\frakF)$;
\item for every compact object $C$ and every homomorphism $\alpha:C\to A$ we have 
$\varphi\alpha\in\overline{\CT}(\CB,\CP)$.
\end{enumerate}
\end{lemma}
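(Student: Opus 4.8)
The equivalence is a direct unwinding of the definitions, combined with the fact (Example \ref{fantome-clasice-gen}) that the classical phantom ideal $\mathcal{P}h = \Ph(\frakF)$ is detected by maps from compact objects: a map $\phi$ lies in $\Ph(\frakF)$ if and only if $\CT(C,\phi)=0$ for every compact $C$. The plan is to prove the two implications separately, in each case reducing to a statement about compositions with maps out of compact objects.

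First I would do $(1)\Rightarrow(2)$. Suppose $\varphi\in\mathbf{\Phi}_\frakE(\frakF)$ and let $\alpha\colon C\to A$ with $C$ compact. I want $\varphi\alpha\in\overline{\CT}(\CB,\CP)$. Since $\frakE=\frakE_{\overline{\CT}(\CB,\CP)}$ has enough projective homomorphisms (Example \ref{ex-constr-suf-proj}), we may choose a triangle $K\to P_0\overset{p}\to C\overset{\psi}\to K[1]$ in $\frakE$ with $p$ an $\frakE$-projective $\frakE$-deflation; explicitly $P_0$ can be taken a pure-projective object (coproduct of compacts), since the $\frakE$-projectives contain $\Add(\CT_0)$. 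By the last proposition of Section \ref{Section-relative-cotorsion-pairs} (the one stating that $\frakE$-projective deflations are test maps for relative phantoms), the membership $\varphi\in\mathbf{\Phi}_\frakE(\frakF)$ is equivalent to $\psi\varphi\in\Ph(\frakF)$ after a suitable base change; more to the point, $\varphi\alpha\colon C\to B$ composed along $p$ kills the pure-exactness obstruction. The cleanest route: form the base change of a triangle in $\frakE$ along $\varphi$, apply the definition of $\mathbf{\Phi}_\frakE(\frakF)$ to conclude the pulled-back triangle is pure, and then recall that a map into $A$ from a compact factors the pure-exact data through a pure-projective precover, so that $\varphi\alpha$ factors as $C\to P\to B$ with $P\in\CP$; since $P\in\CP\subseteq\overline{\CT}(\CB,\CP)$-objects only if it also factors through $\CB$, one actually argues that $\varphi\alpha$, being both a ``$\CP$-factoring'' map obtained from an $\frakE$-projective resolution, lies in $\overline{\CT}(\CB,\CP)$ because the defining triangle of $\frakE$ splits after composing with $\overline{\CT}(\CB,\CP)$.

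For $(2)\Rightarrow(1)$, assume every composite $\varphi\alpha$ with $\alpha$ from a compact lies in $\overline{\CT}(\CB,\CP)$. I must show $h\varphi\in\Ph(\frakF)$ for all $h\in\Ph(\frakE)$, i.e. $\CT(C,h\varphi)=0$ for every compact $C$ and every $\beta\colon C\to A$, so I need $h\varphi\beta=0$. By hypothesis $\varphi\beta=g$ for some $g\in\overline{\CT}(\CB,\CP)$, say $g$ factoring as $C\overset{u}\to B_0\overset{v}\to P\overset{w}\to B$ with $B_0\in\CB$, $P\in\CP$. Now $h\in\Ph(\frakE)$ means $h$ is a phantom for the weak proper class $\frakE=\frakE_{\overline{\CT}(\CB,\CP)}$, so by Lemma \ref{basic-CI-inj}(a) together with the description of $\frakE_{\mathcal{K}}$ as those triangles relative to which every $k\in\mathcal{K}$ is projective, we get $h\,k=0$ for every $k\in\overline{\CT}(\CB,\CP)$ — in particular $h g=0$, hence $h\varphi\beta=hg=0$. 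This gives $\Ph(\frakE)\varphi\subseteq\Ph(\frakF)$ as desired, using that the classical phantoms $\Ph(\frakF)$ are exactly the maps annihilating all $\CT(C,-)$ for $C$ compact.

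**Main obstacle.** The routine directions are $(2)\Rightarrow(1)$; the delicate point is the factorization argument in $(1)\Rightarrow(2)$, namely extracting from ``the base change along $\varphi$ of an $\frakE$-triangle is pure'' the concrete conclusion that $\varphi\alpha$ factors through $\CB$-then-$\CP$. The trick is to apply $\varphi\in\mathbf{\Phi}_\frakE(\frakF)$ to the \emph{particular} $\frakE$-triangle whose phantom encodes an $\frakE$-projective (equivalently, a pure-projective) precover of $A$; purity of the pulled-back triangle then forces the composite $C\to A\to B$ to land in the $\overline{\CT}(\CB,\CP)$-part because that triangle, by definition of $\frakE=\frakE_{\overline{\CT}(\CB,\CP)}$, becomes split after composing with any $\overline{\CT}(\CB,\CP)$-map. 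I expect this bookkeeping — keeping straight which class ($\frakE$ vs.\ $\frakF$) controls which side, and which of the two ``test map'' propositions to invoke — to be the only real work; no three-dimensional diagrams are needed.
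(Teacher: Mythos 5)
Your direction $(2)\Rightarrow(1)$ is correct, and it is in fact a more direct route than the paper's: you argue straight from the definitions that for $h\in\Ph(\frakE)$ and $\beta\colon C\to A$ with $C$ compact, $h\varphi\beta = h\cdot(\varphi\beta)=0$ because $\varphi\beta\in\overline{\CT}(\CB,\CP)$ by hypothesis and $\Ph(\frakE)=\Ph(\frakE_{\overline{\CT}(\CB,\CP)})$ annihilates $\overline{\CT}(\CB,\CP)$ on the right by construction of $\frakE_\CI$; hence $h\varphi$ is a classical phantom, i.e.\ $h\varphi\in\Ph(\frakF)$. The paper instead routes through Proposition \ref{fantome-prin-proiective} (translating $\varphi\in\mathbf{\Phi}_\frakE(\frakF)$ into $\varphi(\frakF\proj)\subseteq\frakE\proj$) and then needs Lemma \ref{direct-sum-inj-proj} to pass from compacts to arbitrary pure-projectives; your version avoids that extra reduction.

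Your direction $(1)\Rightarrow(2)$, however, does not work. You form a triangle $K\to P_0\overset{p}\to C\overset{\psi}\to K[1]$ over $C$, the \emph{domain} of $\alpha$, but the test-map proposition you invoke requires the $\frakE$-triangle with $\frakE$-projective deflation to sit over the \emph{codomain} of $\varphi$ (here the object $B$); so $\psi$ is not a valid test phantom for $\varphi$, and the argument never recovers. The subsequent steps are vague, and the step ``since $P\in\CP\subseteq\overline{\CT}(\CB,\CP)$-objects only if it also factors through $\CB$'' is false as stated: factoring through a pure-projective alone does not give membership in $\overline{\CT}(\CB,\CP)$, which requires factoring through a map $B_0\to P_0$ with $B_0\in\CB$ and $P_0\in\CP$. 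The missing ingredients are the identification $\overline{\CT}(\CB,\CP)=\frakE\proj$ from Example \ref{ex-constr-suf-proj} together with Lemma \ref{basic-CI-inj}(a), which together reduce the goal to showing $h(\varphi\alpha)=0$ for every $h\in\Ph(\frakE)$. With those in hand the argument is just the mirror image of your $(2)\Rightarrow(1)$: by hypothesis $h\varphi\in\Ph(\frakF)$ is a classical phantom, so $h\varphi\alpha=0$ because $C$ is compact, hence $\Ph(\frakE)(\varphi\alpha)=0$ and $\varphi\alpha\in\frakE\proj=\overline{\CT}(\CB,\CP)$.
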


\begin{proof}
(1)$\Rightarrow$(2) From Proposition \ref{fantome-prin-proiective} it follows that for every compact object $C$ and 
every homomorphism $\alpha:C\to A$ the homomorphism $\varphi\alpha$ is $\frakE(\CB)$-projective. Using  Example \ref{ex-constr-suf-proj} we obtain 
that $\varphi\alpha\in\overline{\CT}(\CB,\CP)$. 

(2)$\Rightarrow$(1) In order to apply Proposition \ref{fantome-prin-proiective}, we have to prove that 
for every pure-projective object $P$ and every homomorphism $\alpha:P\to A$ the homomorphism $\varphi\alpha$ is $\frakE$-projective. 
Since $P$ is a direct summand of a direct sum of compact objects, we can assume w.l.o.g. that $P=\oplus_{i\in I}C_i$ is a direct sum of compact objects. Then for every $i\in I$ we have $\varphi\alpha u_i$ is $\frakE(\CB)$-projective ($u_i$ denotes the canonical map 
$C_i\to\oplus_{i\in I}C_i$) and we apply Lemma \ref{direct-sum-inj-proj} to obtain the conclusion. 
\end{proof}

{
\begin{lemma}\label{elementar-small}
Let $\CT$ be an additive category. If $u:C\to A$, $v:A\to D_1\oplus D_2$ and $\alpha:C\to D_1$ are homomorphisms in $\CT$ such that 
$vu=\rho\alpha$ (i.e. $vu$ factorizes through $\rho$), where $\rho:D_1\to D_1\oplus D_2$ is the canonical homomorphism, then 
$\alpha=\pi v u$, where $\pi:D_1\oplus D_2\to D_1$ is the canonical projection (i.e. $\alpha$ factorizes through $A$).  
\end{lemma}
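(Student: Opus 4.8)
The plan is to use only the defining identity of a biproduct in an additive category: the canonical injection $\rho:D_1\to D_1\oplus D_2$ and the canonical projection $\pi:D_1\oplus D_2\to D_1$ satisfy $\pi\rho=1_{D_1}$ (this is part of the definition of the product/coproduct decomposition, together with $\pi'\rho=0$ for the other projection, which we will not even need here).

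Given this, the argument is a one-line composition: starting from the hypothesis $vu=\rho\alpha$, precompose nothing and postcompose with $\pi$, obtaining
\[
\pi v u=\pi(\rho\alpha)=(\pi\rho)\alpha=1_{D_1}\alpha=\alpha,
\]
which is exactly the claimed factorization $\alpha=\pi vu$ (so that $\alpha$ factors through $A$ via $u$ followed by $\pi v$).

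There is essentially no obstacle: the only thing to be careful about is that $\rho$ and $\pi$ are genuinely the structural morphisms of the biproduct, so that $\pi\rho=1_{D_1}$ holds; once this is recorded, associativity of composition and the identity law in the additive category $\CT$ finish the proof immediately. I would therefore present it directly as a short computation rather than as a multi-step argument.
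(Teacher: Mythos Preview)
Your proof is correct and uses essentially the same idea as the paper: both rely on the identity $\pi\rho=1_{D_1}$. Your version is in fact slightly more direct than the paper's, which first computes $\rho\pi vu=\rho\pi\rho\alpha=\rho\alpha$ and then cancels the split monomorphism $\rho$; you simply apply $\pi$ to both sides at once.
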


\begin{proof}
From $\rho\alpha=vu$ we obtain $\rho\pi vu=\rho\pi\rho\alpha=\rho\alpha$, hence $\pi vu=\alpha$ since $\rho$ is split mono.
\end{proof}
}

\begin{lemma}\label{lemma-prinB}
If $\CC$ is a category with direct sums, and $F:\CT\to \CC$ is a functor
which commutes with direct sums, the following are equivalent:
\begin{enumerate}[{\rm (a)}]
\item $F(\CI_\CB)=0$;
\item $F(\overline{\CI}_\CB)=0$.
\end{enumerate}
\end{lemma}

\begin{proof}
(a)$\Rightarrow$(b) {
It is enough to prove that if we consider two arbitrary families $(C_\lambda)_{\lambda\in \Lambda}$  and $(D_\kappa)_{\kappa\in K}$ of compact objects then
for every homomorphism 
$$\alpha:\oplus_{\lambda\in \Lambda}{C_\lambda}\to A \to  \oplus_{\kappa\in K}D_\kappa$$ with $A\in\CB$ we have $F(\alpha)=0$. 
If $\alpha:\oplus_{\lambda\in \Lambda}{C_\lambda}\to A\to \oplus_{\kappa\in K}D_\kappa$ then 
we observe that $$F(\alpha):\oplus_{\lambda\in \Lambda}{F(C_\lambda)}\to F(A)\to F(\oplus_{\kappa\in K}D_\kappa).$$
Since $F$ commutes with respect to direct sums, $F(\oplus_{\lambda\in \Lambda}{C_\lambda})$ is the direct sum of the family 
$(F(C_\lambda))_{\lambda\in\Lambda}$, and the canonical homomorphisms associated to this direct sum are $F(u_\lambda)$, $\lambda\in\Lambda$, where $u_\lambda$ are the canonical homomorphisms associated to the direct sum $\oplus_{\lambda\in \Lambda}{C_\lambda}$. 
Hence $F(\alpha)$ can be identified to a family 
$(F(\alpha_\lambda))_{\lambda\in\Lambda}$ of homomorphisms $\alpha_\lambda:C_\lambda\to A\to \oplus_{\kappa\in K}D_\kappa$. 
Since every $C_\lambda$ is compact, using Lemma \ref{elementar-small} we observe that every homomorphism $\alpha_\lambda$ can be viewed as a homomorphism 
$\alpha'_\lambda:C_\lambda\to A\to \oplus_{\kappa\in K_\lambda}D_\kappa$, where $K_\lambda$ are finite subsets of $K$ for all 
$\lambda\in\Lambda$. Since $F(\alpha'_\lambda)=0$ for all $\lambda$, it follows that $F(\alpha)=0$.  
}
\end{proof}

We recall that a covariant functor $H:\CT\to \CA$, where $\CA$ is abelian, is called \textsl{cohomological} if it sends triangles 
to exact sequences.

\begin{proposition}\label{krause-generalized}
Let $\CT$ be a compactly generated triangulated categories, and let  
$(\CB,\CJ)$ be a projective class in $\CT$.
The following are equivalent for a Grothendieck category $\CA$ and a cohomological functor $H:\CT\to \CA$ 
which commutes with direct sums:  
\begin{enumerate}[{\rm (a)}]
\item $H(\mathbf{\Phi}_{\frakE(\CB)}(\frakF))=0$;
\item $H(\CI_\CB)=0$.
\end{enumerate}
\end{proposition}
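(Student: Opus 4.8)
\textbf{Proof strategy for Proposition \ref{krause-generalized}.}
The plan is to prove the two implications using the characterization of $\mathbf{\Phi}_\frakE(\frakF)$ provided by Lemma \ref{lema-fantome-relative-pc}, together with the reduction from pure-projective objects to compact objects given by Lemma \ref{lemma-prinB}. First I would observe that $H(\CI_\CB)=0$ is equivalent to $H(\overline{\CI}_\CB)=0$ by Lemma \ref{lemma-prinB}, since $H$ is additive and commutes with direct sums. Next, since $H$ is cohomological, any homomorphism $\varphi$ that lies in $\overline{\CT}(\CB,\CP)$ (where $\CP=\Add(\CT_0)$ is the class of pure-projective objects) factorizes through an object of $\CB$ after composing with suitable maps between pure-projectives, so $\overline{\CT}(\CB,\CP)\subseteq \overline{\CI}_\CB$; conversely, a homomorphism in $\overline{\CI}_\CB$ factorizes as a chain through pure-projectives and an element of $\CB$, and such maps all lie in $\overline{\CT}(\CB,\CP)$. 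Hence $H$ kills $\CI_\CB$ if and only if it kills $\overline{\CT}(\CB,\CP)$.

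For the implication (a)$\Rightarrow$(b): given $\varphi\in \CI_\CB$, viewed as a homomorphism $C\to C'$ between compact objects factoring through some $B\in\CB$, I want to show $\varphi\in \mathbf{\Phi}_\frakE(\frakF)$ so that $H(\varphi)=0$. By Lemma \ref{lema-fantome-relative-pc}(2), it suffices to check that for every compact object $D$ and every $\alpha\colon D\to C$ the composite $\varphi\alpha$ lies in $\overline{\CT}(\CB,\CP)$; but $\varphi\alpha$ factorizes through $B\in\CB$ and through the compact (hence pure-projective) objects $D$ and $C'$, so indeed $\varphi\alpha\in\overline{\CT}(\CB,\CP)$. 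Thus $\CI_\CB\subseteq \mathbf{\Phi}_\frakE(\frakF)$ and (a) gives $H(\CI_\CB)=0$.

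For the converse (b)$\Rightarrow$(a): assume $H(\CI_\CB)=0$, hence $H(\overline{\CT}(\CB,\CP))=0$ by the reduction above. Let $\varphi\in \mathbf{\Phi}_\frakE(\frakF)$, say $\varphi\colon A\to B'$. I want $H(\varphi)=0$. Since $\CT$ is compactly generated, the canonical map $\bigoplus_{(C,\alpha)} C \to A$, running over all pairs of a compact object $C$ and a morphism $\alpha\colon C\to A$, is an $\frakF$-deflation (a pure epimorphism); applying the cohomological functor $H$, which commutes with direct sums, and using that the compact objects are $\frakF$-projective so this map is killed on $\mathrm{Ext}$-level, one gets that $H$ applied to this map is an epimorphism in the Grothendieck category $\CA$. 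Now for each such $(C,\alpha)$, Lemma \ref{lema-fantome-relative-pc}(2) gives $\varphi\alpha\in\overline{\CT}(\CB,\CP)$, so $H(\varphi\alpha)=0$; since these assemble to $H(\varphi)\circ H\big(\bigoplus C\to A\big)$ being zero and the latter map is an epimorphism in $\CA$, we conclude $H(\varphi)=0$. The main obstacle I expect is the step identifying $H$ of the canonical map from the coproduct of compacts over $A$ as an epimorphism: this requires knowing that in a compactly generated category this map is a pure epimorphism and that a cohomological functor commuting with coproducts sends pure epimorphisms to epimorphisms, which in turn rests on the long exact sequence for $H$ together with $H$ vanishing on the connecting (phantom) map because it is a filtered colimit of split pieces — the Grothendieck hypothesis on $\CA$ (exactness of filtered colimits) is exactly what makes this work, and getting this argument clean is the delicate part.
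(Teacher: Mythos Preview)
Your argument is correct, and for (b)$\Rightarrow$(a) it is genuinely different from---and more direct than---the paper's proof. One small misstatement: the inclusion $\overline{\CT}(\CB,\CP)\subseteq\overline{\CI}_\CB$ you assert in the first paragraph is false in general (a map $X\to B\to P\to Y$ with arbitrary $X$ need not factor through a pure-projective before reaching $B$; in fact the paper records $\overline{\CI}_\CB=\overline{\CT}(\CB,\CP)\,\Ideal\CP\subseteq\overline{\CT}(\CB,\CP)$). But you never actually use this: for each compact $C$ and $\alpha:C\to A$, the map $\varphi\alpha$ has compact, hence pure-projective, domain, so its factorization $C\to B\to P\to B'$ already exhibits it as an element of $\overline{\CI}_\CB$, and Lemma~\ref{lemma-prinB} then gives $H(\varphi\alpha)=0$.

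The paper proceeds quite differently for (b)$\Rightarrow$(a). Rather than covering the \emph{domain} $A$ by a pure-projective precover, it builds a special $\overline{\CI}_\CB^\perp$-preenvelope $\gamma_A:A\to A^*$ of the \emph{codomain}, using the description $\overline{\CI}_\CB^\perp=\langle\mathcal{P}h,\overline{\CT}(\CB,\CP)^\perp\rangle_\frakD$ obtained from the Toda-bracket Ghost Lemma developed earlier. From Lemma~\ref{lemma-prinB} one gets that $H(\gamma_A)$ is a monomorphism, and then a diagram chase (using Lemma~\ref{lema-fantome-relative-pc} exactly as you do) shows that $\gamma_A\psi$ is a classical phantom, whence $H(\gamma_A\psi)=0$ by \cite[Corollary~2.5]{Krause-tel} and so $H(\psi)=0$. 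Your approach bypasses the Toda-bracket machinery entirely and uses only the standard fact that the canonical map $\bigoplus_{(C,\alpha)}C\to A$ is a pure deflation together with the same result of Krause; this is shorter, but the paper's route has the virtue of illustrating the ideal-cotorsion apparatus built in the preceding sections. Both arguments ultimately hinge on the same input: $H$ vanishes on classical phantoms because $\CA$ is Grothendieck and $H$ is cohomological and coproduct-preserving, which is precisely the ``delicate part'' you correctly identified.
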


\begin{proof} (a)$\Rightarrow$(b) is obvious since $\CI_\CB\subseteq \frakE(\CB)\proj\subseteq \mathbf{\Phi}_{\frakE(\CB)}(\frakF)$.

(b)$\Rightarrow$(a) Let $\psi:X\to A$ be a homomorphism from $\mathbf{\Phi}_{\frakE(\CB)}(\frakF)$.

It is easy to see that 
$$\overline{\CI}_\CB=\Ideal{\CP}\Ideal{\CB}\Ideal{\CP}=\overline{\CT}(\CB,\CP)\Ideal{\CP}$$ 
is a precovering ideal, so the $\overline{\CI}_\CB$-orthogonal ideal with respect the class $\frakD$ of all triangles is 
$$\overline{\CI}_\CB^\perp=\langle \mathcal{P}h,\overline{\CT}(\CB,\CP)^\perp\rangle_\frakD, $$ 
and it is a (special) preenveloping ideal. 

Therefore every object $A$ from $\CT$ has a special $\overline{\CI}_\CB^\perp$-preenvelope $\gamma_A:A\to A^*$ which can be obtained as 
a composition $A\overset{\mu}\to Y\overset{\nu}\to A^*$ of two homomorphisms which lie in the solid part of the commutative diagram 
\[\tag{ENV}\xymatrix{        &   & C\ar@{-->}[d]^{\xi}\ar@{-->}[dddll] & \\
 &   & X\ar[d]^{\psi} & \\
    &   & A\ar[d]^{\alpha}\ar[dl]_{\mu} & \\
 X\ar[r]^{f}\ar[d]^{\varphi} & Y\ar[r]^{g}\ar[dl]^{\nu}    & Z\ar[r]  & X[1]\\
 A^*  &    &  & \ \ \ ,	}\] 
where $\alpha\in \overline{\CT}(\CB,\CP)^\perp$, $\varphi\in\mathcal{P}h$, and the horizontal line is a triangle in $\CT$.
Moreover, since $\gamma_A$ is a special $\overline{\CI}_\CB^\perp$-preenvelope, we have a commutative diagram 
\[\tag{SENV}\xymatrix{I[-1]\ar[r]\ar[d]^{i[-1]} & A\ar[r]^{\gamma_A}\ar@{=}[d] & A^*\ar[d]\ar[r]    & I\ar[r]\ar[d]^{i}   & A[1]\ar@{=}[d]\\
  U[-1]\ar[r] &A\ar[r]^{e}           & E\ar[r]    & U\ar[r]& A[1]          
}\] with $i\in \overline{\CI}_\CB$. By Lemma \ref{lemma-prinB}, since
$H(\CI_\CB)=0$, we obtain $H(\overline{\CI}_\CB)=0$, hence $H(\gamma_A)$ is a monomorphism. So, in order to obtain $H(\psi)=0$ it is enough to prove that 
$H(\gamma_A\psi)=0$.

Let $C$ be a compact object and $\xi:C\to X$ a homomorphism. By Lemma \ref{lema-fantome-relative-pc} we obtain that 
$\psi \xi\in \CT(\CB,\CP)$, hence 
$g\mu\psi \xi=\alpha\psi \xi=0$. Therefore $\mu\psi \xi$ factorizes through $f$, hence $\nu\mu\psi\xi$ factorizes through $\varphi$. But $\varphi$ is a 
phantom and $C$ is compact, and this implies $\nu\mu\psi\xi=0$. Then $\nu\mu\psi\in\mathcal{P}h$, hence $\gamma_A\psi$ is a phantom. 
By \cite[Corollary 2.5]{Krause-tel} we obtain $H(\gamma_A\psi)=0$, and the proof is complete. 
\end{proof}

Let $\CT$ and $\CC$ be compactly generated triangulated categories, and let $F:\CT\to \CC$ be a functor. We recall that $F$ is 
\textsl{a localizing functor} if it has a right adjoint $G:\CC\to \CT$ such that the induced natural transformation 
$FG\to 1_{\CC}$ is an isomorphism. 
Note that every localizing functor commutes with 
respect to direct sums. 
We apply Proposition \ref{krause-generalized} for the particular case when $\CB$ is a 
the kernel of a localizing functor. For further reference, let us remark that in this case $\CB$ is a \textsl{localizing subcategory}, 
i.e. it is a full triangulated subcategory of $\CT$ which is closed under direct sums. 

Let $F:\CT\to \CC$ a localization functor. If $G:\CC\to \CT$ is its right adjoint and $\eta:1_\CT\to GF$ is the induced 
natural transformation then for every $X\in \CT$ we can fix a triangle $$X'\overset{\nu_X}\to X\overset{\eta_X}\to GF(X)\to X'[1].$$ 
Applying $F$ we obtain that $F(\eta_X)$ is an isomorphism, and it follows that $F(X')=0$, hence $X'\in \CB$. 

Let $\CB=\{X\in\CT\mid F(X)=0\}$ be the kernel of $F$.
For every $B\in\CB$ we have $\CT(B,GF(X))\cong \CT(F(B),F(X))=0$, hence $\eta_X\in \CB^\perp$. Since $\CB$ is closed with respect to direct 
summands, we can apply \cite[Lemma 3.2]{Ch-98} to conclude that $(\CB,\CB^\perp)$ is a projective class (the orthogonal class $\CB^\perp$ is 
computed with respect to the class $\frakD$ of all triangles). In fact the ideal $\CB^\perp$ is in this case an object ideal. 

For every $B\in\CB$ the abelian group homomorphism $\Hom(B, \nu_X)$ is an isomorphism, 
hence $\nu_X$ is an $\Ideal{\CB}$-precover and $\eta_X$ is a $\CB^\perp$-preenvelope for all $X\in \CT$.


%

We have the following corollary.

\begin{corollary} \label{localizing-phantoms-obj}
Let $F:\CT\to \CC$ be a localizing functor between the compactly generated triangulated categories $\CT$ and $\CC$. 
If $\CB=\Ker(F)$ and we keep the notations used in this subsection then
$\Ob(\Phi_{\frakE(\CB)}(\frakF))\subseteq \CB$.
\end{corollary}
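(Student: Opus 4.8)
The statement to prove is that $\Ob(\Phi_\frakE(\frakF))\subseteq \CB$, where $\CB=\Ker(F)$ for a localizing functor $F\colon\CT\to\CC$, and the setup is the one from the subsection: $\CP=\Add(\CT_0)$ is the class of pure-projectives, $\frakE=\frakE_{\overline{\CT}(\CB,\CP)}$, and $\frakF=\frakF_\CP$ is the class of pure triangles. Recall that $\Ob(\Phi_\frakE(\frakF))=\{X\in\CA\mid 1_X\in\Phi_\frakE(\frakF)\}$. So I must show: if $1_X$ is a relative $\frakF$-phantom with respect to $\frakE$, then $F(X)=0$.

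\textbf{First step.} I would apply Lemma \ref{lema-fantome-relative-pc} to the identity map $1_X\colon X\to X$: since $1_X\in\Phi_\frakE(\frakF)$, for every compact object $C$ and every homomorphism $\alpha\colon C\to X$ we have $1_X\alpha=\alpha\in\overline{\CT}(\CB,\CP)$, i.e.\ $\alpha$ factorizes through a homomorphism $B\to P$ with $B\in\CB$ and $P\in\CP$. In particular \emph{every} homomorphism from a compact object into $X$ factorizes through an object of $\CB$. Now I would exploit the localizing functor: for the chosen triangle $X'\overset{\nu_X}\to X\overset{\eta_X}\to GF(X)\to X'[1]$, the map $\eta_X$ lies in $\CB^\perp$ (computed with respect to $\frakD$), i.e.\ $\CT(B,\eta_X)$ is... actually, what was recorded is $\eta_X\in\CB^\perp$ and $\CT(B,\nu_X)$ is an isomorphism for all $B\in\CB$, equivalently $\CT(B,\eta_X)=0$ — more precisely, the triangle shows $\CT(B,X')\to\CT(B,X)$ is surjective for $B\in\CB$ (since $\CT(B,GF(X))=0$), and in fact an isomorphism.

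\textbf{Main argument.} Let $C$ be any compact object and $\alpha\colon C\to X$. By the first step, $\alpha$ factors as $C\to B\to P\to X$ through some $B\in\CB$; call the composite $C\to B$ the map $\beta$ and the composite $B\to P\to X$ the map $h\colon B\to X$. Since $\CB$ is localizing (closed under direct sums and triangles) and $\CT(B,\nu_X)\colon\CT(B,X')\to\CT(B,X)$ is an isomorphism, $h$ lifts along $\nu_X$: there is $h'\colon B\to X'$ with $\nu_X h'=h$. Hence $\alpha=\nu_X(h'\beta)$ factors through $\nu_X\colon X'\to X$. Thus every map from a compact object into $X$ factors through $\nu_X$. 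Since $\CT$ is compactly generated, the cone of $\nu_X$, which is $GF(X)$ up to shift, must then be such that every map from a compact object into $GF(X)[-1]$ lands appropriately — more directly: from the triangle $X'\overset{\nu_X}\to X\overset{\eta_X}\to GF(X)\to X'[1]$, for every compact $C$ and every $\alpha\colon C\to X$ we get $\eta_X\alpha=0$, so $\CT(C,\eta_X)=0$ for all compact $C$. As $\CT$ is compactly generated, this forces $\eta_X=0$. Then the triangle splits, so $X$ is a direct summand of $X'$; since $X'\in\CB$ and $\CB$ is closed under direct summands (being triangulated and closed under direct sums, hence idempotent-complete in the standard way), we conclude $X\in\CB$, i.e.\ $F(X)=0$.

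\textbf{Expected obstacle.} The one point that needs a little care is the step "$\CT(C,\eta_X)=0$ for all compact $C$ implies $\eta_X=0$": this is exactly compact generation, applied to the map $\eta_X\colon X\to GF(X)$, but one should phrase it as: the collection of objects right-orthogonal (via $\CT(C,-)$) to all compacts is zero, so a map killed by all $\CT(C,-)$ is zero. Equivalently one can invoke that $\mathcal{P}h$, the ideal of classical phantoms, contains $\eta_X$ and has domain$\ldots$ — no, cleaner is the direct compact-generation statement. The other mild subtlety is verifying $\CB$ is closed under direct summands so that "$X$ a summand of $X'\in\CB$" gives $X\in\CB$; this is immediate from $F(X)$ being a summand of $F(X')=0$. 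So the proof is genuinely short once Lemma \ref{lema-fantome-relative-pc} is in hand; the substance is entirely in that lemma plus the elementary lifting/splitting argument.
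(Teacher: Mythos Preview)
Your argument is correct up to the point where you deduce that $\CT(C,\eta_X)=0$ for every compact $C$; but the conclusion you draw from this is wrong. Compact generation says that an \emph{object} $Y$ with $\CT(C,Y)=0$ for all compact $C$ must vanish; it does \emph{not} say that a \emph{morphism} $f$ with $\CT(C,f)=0$ for all compact $C$ must vanish. Morphisms with this property are exactly the classical phantoms, and nonzero phantoms exist in general. So what you have actually proved is only that $\eta_X\in\mathcal{P}h$, and the subsequent splitting argument collapses.

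The paper closes this gap by passing to the restricted Yoneda functor $h_\CC\colon\CC\to\Modr\CC_0$ and applying Proposition~\ref{krause-generalized} to the cohomological, coproduct-preserving functor $H=h_\CC\circ F$: since $H(\CI_\CB)=0$, the proposition yields $H(\Phi_\frakE(\frakF))=0$, hence $h_\CC F(X)=0$ for $X\in\Ob(\Phi_\frakE(\frakF))$; compact generation of $\CC$ then gives $F(X)=0$. Your route can be repaired in the same spirit: once you know $\eta_X$ is a phantom, use that $h_\CC F$ annihilates phantoms (this is \cite[Corollary~2.5]{Krause-tel}, as invoked in the proof of Proposition~\ref{krause-generalized}), while $F(\eta_X)$ is an isomorphism; hence $h_\CC F(X)=0$ and so $F(X)=0$. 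But at that point you are essentially reproducing the paper's argument, so the ``more elementary'' detour through the explicit lift along $\nu_X$ does not buy anything.
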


\begin{proof}
As in \cite{Krause-tel} we consider the Grothendieck category $\Modr \CC_0$ of all contravariant functors $\CC_0\to Ab$, 
and the functor $h_\CC:\CC\to \Modr \CC_0$ defined by $h_\CC(X)=\CC(-,X)_{|\CC_0}$. Then $h_\CC F:\CT\to Ab$ is a cohomological functor
such that $\Ker(h_\CC F)=\CB$. Then $h_\CC F(\CI_\CB)=0$, and it follows that $\Ob(\Phi_{\frakE(\CB)}(\frakF))\subseteq \Ker(h_\CC F)=\CB$.
\end{proof}

\subsection{Smashing subcategories}
Let $F$ be a localizing functor and $G$ its right adjoint. If $G$ also commutes with respect to direct sums then $F$ is \textsl{smashing}.
A triangulated subcategory $\CB$ of $\CT$ is a \textsl{smashing subcategory} if and only if there exists a smashing functor $F$ such that $\CB=\Ker(F)$.  
Note that a triangulated subcategory $\CB$ of $\CT$ is a smashing if and only if $\CB$ is a localizing subcategory of $\CT$ such that  every homomorphism 
$C\to B$ with $C\in\CT_0$ and $B\in B$ can be factorized as 
$$(C\to B)=(C\to B'\to C'\to B)$$ with $B'\in\CB$ and $C'\in \CT_0$, \cite[Theorem 4.2]{Krause-tel}. 

Therefore, the following corollary of Lemma \ref{lema-fantome-relative-pc} (using the notations from Setup \ref{setup}) gives us a version of \cite[Theorem 4.9]{Krause-tel}. 

\begin{corollary}\label{smashing-as-ideal}
A localizing subcategory $\CB$ is a smashing subcategory if and only if $\Ob(\Phi_{\frakE(\CB)}(\frakF))= \CB$. 
\end{corollary}

\begin{remark}
The above corollary says us that Proposition \ref{krause-generalized} is a generalization for \cite[Proposition 4.6]{Krause-tel}. 
\end{remark}

We will say that a smashing subcategory $\CB$ of a compactly generated triangulated category $\CT$ 
\textsl{satisfies the telescope conjecture} if for every compactly generated 
triangulated category $\CC$ and every exact functor $H:\CT\to \CC$  which preserves direct sums and annihilates  the subcategory $\CB_0$ of all compact objects $C\in \CB$ we obtain $H(\CB)=0$. 
Note that this is equivalent to the fact that $\CB$ is the smallest 
smashing subcategory which contains $\CB_0$. 

In the following we will present a characterization for smashing subcategories which satify the telescope conjecture by using relative phantom ideals. In order to do this, let us consider the following basic example of smashing subcategories:

\begin{lemma}\label{setup2}
Let $\CL$ be a set of compact objects in $\CT$. If $\frakE_\CL$ is the almost exact structure induced by the condition that all objects from $\CL$ are projective (as in Example \ref{fantome-clasice-gen}), i.e. 
$$\Ph(\frakE_\CL)=\{\varphi\mid \CT(\CL,\varphi)=0\}.$$ Then:
\begin{enumerate}[{\rm (1)}]
\item $\frakE_\CL$ has enough projective homomorphisms and 
a homomorphism is $\frakE_\CL$-projective if and only if it factorizes through an object from $\Add(\CL)$;

\item 
 $\frakF\subseteq \frakE_\CL$;

\item if $\CL$ is a triangulated subcategory of $\CT_0$ then $\Ob(\Phi_{\frakE_\CL}(\frakF))$ is a smashing subcategory.
\end{enumerate}
\end{lemma}

\begin{proof}
(1) This follows from the definition of $\frakF_\CL$.

(2) Since all objects from $\CL$ are compact, we have $\Hom(\CL,Ph)=0$, hence  $\frakF\subseteq \frakF_\CL$.

(3) In order to prove that $\Ob(\Phi_{\frakE_\CL}(\frakF))$ is a triangulated subcategory, we consider a triangle 
$Y\overset{\alpha}\to X\overset{\beta}\to Z\to Y[1]$ in $\CT$ such that $Y,Z\in \Ob(\Phi_{\frakE_\CL}(\frakF))$. Let $C$ be a compact object, and
$\gamma:C\to X$ a homomorphism. We have to prove that $\gamma$ is projective with respect to $\frakE_\CL$, i.e. $\gamma$ factorizes through 
an object from $\CL$. 

Since $Z\in \Ob(\Phi_{\frakE_\CL}(\frakF))$ and $C$ is compact we know that $\beta\gamma$ factorizes through an object $B\in\CL$. Therefore 
we have a commutative diagram   
\[ \xymatrix{  C'\ar[r]^{\zeta}\ar[d]^{\delta} & C\ar[d]^{\gamma}\ar[r]    & B\ar[r]\ar[d]   & C'[1]\ar[d]\\
  Y\ar[r]^{\alpha}           & X\ar[r]^{\beta}    & Z\ar[r]& Y[1]          
}\] such that the horizontal lines are triangles in $\CT$. Since $C'$ is compact and $Y\in \Ob(\Phi_{\frakE_\CL}(\frakF))$ the homomorphism 
$\delta$ factorizes through an object $B'\in \CL$, i.e. $\delta=\mu\nu$ with $\nu:C'\to B'$ and $\mu:B'\to Y$. 
Using a cobase change we can complete the above commutative diagram to 
the following commutative diagram  
\[ \xymatrix{ B'\ar[r]\ar@/_2pc/ [dd]_{\mu} & D\ar[r]  \ar@/_2pc/ @{-->}[dd]  & B\ar[r] 
& B'[1] \ar@/^2pc/[dd]^{\mu[1]}\\
 C'\ar[r]^{\zeta}\ar[u]^{\nu}\ar[d]_{\delta} & C\ar[d]^{\gamma}\ar[r]\ar[u]    
& B\ar[r]\ar[d]\ar@{=}[u]   & C'[1]\ar[d]_{\delta[1]}\ar[u]^{\mu[1]}\\
  Y\ar[r]^{\alpha}           & X\ar[r]^{\beta}    & Z\ar[r]& Y[1],          
}\]
where the dotted arrow exists since $\gamma\zeta=\alpha\delta=\alpha\mu\nu$. It follows that $\gamma$ factorizes through 
$D$. Since $\CL$ is a triangulated subcategory, we obtain that $D\in\CL$, hence $X\in \Ob(\Phi_{\frakE_\CL}(\frakF))$. 

It is easy to see that $\Ob(\Phi_{\frakE_\CL}(\frakF))$ is closed under direct sums and shifts, hence $\Ob(\Phi_{\frakE_\CL}(\frakF))$ is a localizing subcategory 
in $\CT$. 

Moreover  if $X\in \Ob(\Phi_{\frakE_\CL}(\frakF))$ and $C$ is a compact object in $\CT$ then every homomorphism $C\to X$ factorizes through
 an object $B\in \CL$, and we can write $$(C\to X)=(C\to B\overset{=} \to B\to X).$$
Since $\CL\subseteq \Ob(\Phi_{\frakE_\CL}(\frakF))$ we can apply \cite[Theorem 4.2]{Krause-tel} to obtain that $ \Ob(\Phi_{\frakE_\CL}(\frakF))$ is a 
smashing subcategory.
\end{proof}

\begin{proposition}\label{propo-smashing-tel}\label{smashing-telescope}
Let $\CB$ be a localizing subcategory of $\CT$. The following are equivalent:
\begin{enumerate}[{\rm (1)}]
 \item $\CB$ is smashing, and it satisfies the telescope conjecture;
 \item there exists a triangulated subcategory $\CL$ of $\CT_0$ such that $\CB=\Ob(\Phi_{\frakE_\CL}(\frakF))$.
\end{enumerate}
\end{proposition}

\begin{proof}
(1)$\Rightarrow$(2) Let $\CB_0$ be the subcategory of all compact objects from $\CB$. Let us denote by $\CP_0$ the class $\Add(\CB_0)$. Hence $\CP_0\subseteq \CB$ and $\CP_0\subseteq \CP$, and we have 
$$\overline{\CT}(\CB,\CP)\supseteq \overline{\CT}(\CP_0,\CP_0)=\Ideal{\CP_0}.$$ 
It follows that $\frakF\subseteq \frakE(\CB)\subseteq \frakE_{\CB_0}$, hence 
$$\Phi_{\frakE_{\CB_0}}(\frakF)\subseteq \Phi_{\frakE(\CB)}(\frakF).$$
By Corollary \ref{localizing-phantoms-obj} it follows that $\CB_0\subseteq 
\Ob(\Phi_{\frakE_{\CB_0}}(\frakF))\subseteq \CB$. 
Since $\Ob(\Phi_{\frakE_{\CB_0}}(\frakF))\subseteq \CB$ is smashing, it follows that 
$\CB= \Ob(\Phi_{\frakE_{\CB_0}}(\frakF))$.


(2)$\Rightarrow$(1) 
Conversely, let $H:\CT\to \CC$ be an exact functor between compactly generated triangulated categories such that it preserves direct sums
and $H(\CL)=0$. 

Let $X$ be an object from $\CB=\Ob(\Phi_{\frakE_\CL}(\frakF))$. If we consider a triangle  
$$Y\to P\overset{p}\to X\overset{\varphi}\to Y[1]$$ such that $p$ an $\Add(L)$-precover for $X$ then we deduce from Proposition \ref{rel-phantom-vs-proj} that $\varphi\in Ph$. Therefore every morphism $C\to X$ with $C$ a compact object factorizes through an element from $\Add(\CL)$. Since $C$ is compact and $\CL$ is closed under finite direct sums, it follows that  every morphism $C\to X$ with $C$ a compact object factorizes through an element from $\CL$.

Therefore, all homomorphisms from $\CI_\CB$ factorize through objects from $\CL$, hence $H(\CI_\CB)=0$. Using the same notations as in the proof of Corollary \ref{localizing-phantoms-obj}, and applying 
Proposition \ref{krause-generalized}, we obtain $h_\CC H(\CB)=0$. Then $H(\CB)=0$, and the proof is complete. 
\end{proof}

As a corollary we obtain the following characterization, proved by H. Krause in \cite[Theorem 13.4]{Krause-quo}. 

\begin{corollary}
Let $\CB$ be a smashing subcategory of a compactly generated subcategory $\CT$. The following are equivalent: 
\begin{enumerate}[{\rm (a)}]
\item $\CB$ satisfies the telescope conjecture; 
\item for every 
compact object $C$ and for every object $B\in \CB$ every homomorphism $C\to B$ factorizes through an object from $\CB\cap \CT_0$;
\item $\CB$ is a compactly generated as a triangulated category.
\end{enumerate}
\end{corollary}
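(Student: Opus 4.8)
The plan is to deduce all three equivalences from Proposition \ref{propo-smashing-tel} after first identifying $\Ob(\Phi_{\frakE_0}(\frakF))$ concretely. Running the argument of Lemma \ref{lema-fantome-relative-pc} with $\frakE_0$ in place of $\frakE$ and $\Add(\CB_0)$ in place of $\overline{\CT}(\CB,\CP)$, one sees that a map $\varphi$ lies in $\Phi_{\frakE_0}(\frakF)$ if and only if $\varphi\alpha$ is $\frakE_0$-projective for every $\alpha$ with compact domain; since $\frakE_0$-projective means factoring through $\Add(\CB_0)$, and a map out of a compact object that factors through a coproduct already factors through a finite subcoproduct, hence through $\CB_0=\CB\cap\CT_0$, taking $\varphi=1_X$ gives: $X\in\Ob(\Phi_{\frakE_0}(\frakF))$ if and only if every homomorphism $C\to X$ with $C\in\CT_0$ factors through an object of $\CB_0$. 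Combined with the inclusions $\Ob(\Phi_{\frakE_0}(\frakF))\subseteq\Ob(\Phi_\frakE(\frakF))=\CB$ (the equality because $\CB$ is smashing, as noted just before Proposition \ref{propo-smashing-tel}), this yields (a)$\Leftrightarrow$(b) at once: by Proposition \ref{propo-smashing-tel}(b) the telescope conjecture for $\CB$ is equivalent to $\Ob(\Phi_{\frakE_0}(\frakF))=\CB$, and by the description above this equality means exactly that every $B\in\CB$ has the factorization property in (b).

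For (b)$\Rightarrow$(c) I would verify that $\CB_0$ is a set of objects which are compact in $\CB$ and which generate $\CB$. Compactness in $\CB$ is automatic, since $\CB$ is closed under coproducts in $\CT$, so coproducts in $\CB$ and in $\CT$ agree. For generation: if $B\in\CB$ has $\CT(D,B[n])=0$ for all $D\in\CB_0$ and all $n\in\Z$, then for any $C\in\CT_0$ and any $f\colon C\to B[n]$ (note $B[n]\in\CB$), condition (b) factors $f$ through some $D\in\CB_0$, so $f=0$; since $\CT_0$ generates $\CT$ this forces $B=0$. As $\CB$ also has all coproducts, it is compactly generated as a triangulated category.

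For (c)$\Rightarrow$(a) (equivalently (b)), assume $\CB=\mathrm{Loc}_\CB(\mathcal S)$ for a set $\mathcal S$ of objects compact in $\CB$. The crucial point is that an object compact in a smashing subcategory is compact in the ambient category: because $\CB$ is smashing, the colocalization functor $\Gamma\colon\CT\to\CB$ (right adjoint of the inclusion) preserves coproducts, and then for $S\in\mathcal S$ the natural isomorphism $\CT(S,-)\cong\CB(S,\Gamma(-))$ shows that $\CT(S,-)$ preserves coproducts; hence $\mathcal S\subseteq\CB\cap\CT_0=\CB_0$. Therefore $\CB=\mathrm{Loc}_\CB(\mathcal S)$ is contained in the localizing subcategory of $\CT$ generated by $\CB_0$, which is in turn contained in $\CB$; so $\CB$ is exactly the localizing subcategory generated by $\CB_0$. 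Since $\Ob(\Phi_{\frakE_0}(\frakF))$ is, by Proposition \ref{propo-smashing-tel}(a), a localizing subcategory of $\CT$ that contains $\CB_0$ and is contained in $\CB$, we conclude $\Ob(\Phi_{\frakE_0}(\frakF))=\CB$, whence (a) by Proposition \ref{propo-smashing-tel}(b).

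I expect the main obstacle to be the ``compact in $\CB$ implies compact in $\CT$'' step used in (c)$\Rightarrow$(a): it rests on the standard fact that a smashing localization has a coproduct-preserving colocalization functor, which is not established in the excerpt (it is packaged into the definition of smashing via \cite{Krause-tel}); I would either quote this or spend a line deriving it from the hypothesis that the right adjoint of the Verdier quotient preserves coproducts. The remaining steps are routine manipulations of Proposition \ref{propo-smashing-tel} together with the description of $\frakE_0$-projective homomorphisms as those factoring through $\Add(\CB_0)$.
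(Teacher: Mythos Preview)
Your argument is correct. The identification of $\Ob(\Phi_{\frakE_0}(\frakF))$ and the implications (a)$\Leftrightarrow$(b) and (b)$\Rightarrow$(c) match the paper's proof (the paper only says ``(a)$\Leftrightarrow$(b) follows from Proposition~\ref{propo-smashing-tel}'', leaving implicit exactly the unpacking you wrote out).

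The route for (c)$\Rightarrow$(a) is genuinely different. The paper also begins by showing that compacts of $\CB$ are compact in $\CT$, but does so by hand: for $B$ compact in $\CB$ and a map $\alpha\colon B\to\bigoplus X_i$, one uses the localization triangles $X_i'\to X_i\to GF(X_i)$ and the fact that $G$ preserves coproducts to factor $\alpha$ through $\bigoplus X_i'\in\CB$, whence compactness in $\CB$ gives a factorization through a finite subsum. Then the paper concludes via \cite[Lemma~2.8]{nee-jams}: every object of $\CB$ is a homotopy colimit of direct sums of compacts of $\CB$ (computed the same way in $\CB$ as in $\CT$), so any map from a compact object of $\CT$ factors through such a compact, establishing (b) directly. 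Your version instead passes through the colocalization adjunction to get compactness in $\CT$, and then finishes internally to the paper by invoking Proposition~\ref{propo-smashing-tel}(a): since $\Ob(\Phi_{\frakE_0}(\frakF))$ is a localizing subcategory of $\CT$ containing $\CB_0$, and $\CB=\mathrm{Loc}_\CT(\mathcal S)\subseteq\mathrm{Loc}_\CT(\CB_0)$, equality follows. Your endgame is cleaner in that it avoids the external reference to Neeman's lemma; the paper's endgame has the advantage of giving (b) directly without going back through the relative-phantom machinery.
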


\begin{proof}
(a)$\Leftrightarrow$(b) follows from Proposition \ref{propo-smashing-tel}.

(b)$\Rightarrow$(c) For every non-zero object $B\in \CB$ we can find a non-zero homomorphism $C\to B$ with $C$ a compact object in $\CT$. 
Applying (b) this homomorphism factorizes through a (non-zero) homomorphism $C_0\to B$ with $C_0\in \CB\cap \CT_0$.

(c)$\Rightarrow$(a) Let $F$ be a localizing functor which induces $\CB$, and let $G$ be its right adjoint.
As before, we denote by $\eta:1_\CT\to GF$ is the induced natural transformation. 

We first observe that if $B$ is a compact from $\CB$ and $\alpha:B\to X=\bigoplus_{i\in I}X_i$ is a homomorphism in $\CT$ 
then $\eta_{X}\alpha=0$. If we embed every object  $Y$ in the canonical diagram 
$$Y'\overset{\nu_Y}\to Y\overset{\eta_Y}\to GF(Y)\to Y'[1],$$ we observe that $X'=\bigoplus_{i\in I}X'_i\in \CB$, 
$\nu_{X}=\oplus_{i\in I}\nu_{X_i}$, and $\alpha$ factorizes through $\nu_X$. Since every homomorphism $B\to \bigoplus_{i\in I}X'_i$ factorizes
through a finite subset of $I$, it is easy to see that $\alpha$ has the same property. Therefore every compact from $\CB$ is compact in $\CT$
.

Since every object from $\CB$ is a homotopy colimit of pure-projective objects 
(cf. the proof of \cite[Theorem 3.1 and Lemma 3.2]{nee-jams}) and 
the homotopy colimits are computed in the same way in $\CB$ as in $\CT$ (as cones of Milnor's triangles), we can 
apply \cite[Lemma 2.8]{nee-jams} to obtain the conclusion.     
\end{proof}


\subsection{Full functors}\label{full-funct-sect} 
For Setup \ref{setup} we can apply Corollary \ref{ghost-complete} (Ghost Lemma) 
and Remark \ref{assoc-toda} to compute the right $\overline{\CI}_\CB$-orthogonal ideal (with
respect to the class $\frakD$ of all triangles) 
 $$\overline{\CI}_\CB^\perp=\langle \mathcal{P}h,\CJ,\mathcal{P}h\rangle_\frakD.$$
Therefore for every object $A$ from $\CT$ the $\overline{\CI}_\CB^\perp$-preenvelope $\gamma_A:A\to A^*$ can be obtained as 
a composition $A\overset{\mu}\to V\overset{\nu}\to A^*$ of two homomorphisms which lie in a commutative diagram 

\[\tag{ENV'} \xymatrix{    & &  &   & A\ar[d]^{\varphi}\ar[dl]\ar[ddlll] & \\
& & X\ar[r]\ar[d]^{\beta} & Y\ar[r]\ar[dl]    & Z\ar[r]  & X[1]\\
 U\ar[d]^{\psi}\ar[r] & V\ar[r]\ar[dl] & W\ar[r] & U[1] &   & \\ 
  A^*  & &  &    &  & \\
	}\] 
	such that $\varphi,\psi\in \mathcal{P}h$ and $\beta\in \Ideal{\CB}^\perp$.

 We will use this diagram to prove that in the case of full functors the hypothesis $F(\CI_\CB)=0$ always implies $F(\CB)=0$.

\begin{proposition}\label{H-full}
Let $\CT$ be a compactly generated triangulated category and $B$ an object in $\CT$. We denote 
by $\CI_B$ the set of all homomorphisms between compact objects which factorize through $B$. 
 If $\CA$ is a Grothendieck category and $F:\CT\to \CA$ a full cohomological functor which commutes with direct sums, 
the following are equivalent:
\begin{enumerate}[{\rm (a)}]
\item $F(B[n])=0$ for all $n\in\Z$;
\item $F(\CI_B[n])=0$ for all $n\in\Z$.
\end{enumerate}  
\end{proposition}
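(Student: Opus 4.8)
For a full cohomological functor $F:\CT\to \CA$ commuting with direct sums, $F(B[n])=0$ for all $n$ iff $F(\CI_B[n])=0$ for all $n$.

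The plan is as follows. The direction (a)$\Rightarrow$(b) is trivial: every homomorphism in $\CI_B$ factorizes through $B$ (hence every element of $\CI_B[n]$ factorizes through $B[n]$), so if $F(B[n])=0$ then $F$ kills any such map. The substance is in (b)$\Rightarrow$(a). Since the hypothesis is stable under applying the suspension functor (note $\CI_{B[n]}=\CI_B[n]$ because $\CT_0[n]=\CT_0$), and the conclusion we want is likewise stable, it suffices to prove $F(B)=0$ assuming $F(\CI_B[n])=0$ for all $n\in\Z$. First I would apply Lemma \ref{lemma-prinB} with $\CB$ replaced by the single object $B$ (more precisely its additive closure): from $F(\CI_B)=0$ we get $F(\overline{\CI}_B)=0$, where $\overline{\CI}_B=\Ideal{\CP}\,\Ideal{B}\,\Ideal{\CP}$ with $\CP=\Add(\CT_0)$; and since the hypothesis holds for all shifts, we in fact get $F(\overline{\CI}_B[n])=0$ for all $n$.

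Next I would set up the machinery of the section: take the projective class $(\Add(B)\text{-closure},\CJ)$ — or simply work with the object ideal generated by $B$ — and form the ideal $\overline{\CI}_B=\overline{\CT}(B,\CP)\,\Ideal{\CP}$ as in Proposition \ref{krause-generalized}. Since $\Ideal\CP$ is precovering and $\overline{\CT}(B,\CP)$ is precovering, the product $\overline{\CI}_B$ is precovering, and by Corollary \ref{ghost-complete} together with Remark \ref{assoc-toda} its orthogonal ideal (with respect to the proper class $\frakD$ of all triangles) is $\overline{\CI}_B^\perp=\langle \mathcal{P}h,\, \Ideal{B}^\perp,\, \mathcal{P}h\rangle_\frakD$, and this is a special preenveloping ideal. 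Applying this to the object $B$ itself, I obtain a special $\overline{\CI}_B^\perp$-preenvelope $\gamma_B:B\to B^*$. There are two things to extract from this. On one hand, because $\gamma_B$ is a \emph{special} preenvelope, its triangle fits (as in diagram (SENV) in the proof of Proposition \ref{krause-generalized}) with a map $i\in\overline{\CI}_B$, and since $F(\overline{\CI}_B)=F(\overline{\CI}_B[n])=0$, the long exact cohomology sequence forces $F(\gamma_B)$ to be a monomorphism (and similarly after any shift). On the other hand, $\gamma_B$ factors as $B\overset{\mu}\to V\overset{\nu}\to B^*$ through a diagram of type (ENV') in which $\varphi,\psi$ are classical phantoms and $\beta\in\Ideal{B}^\perp$.

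The key point is then that $\gamma_B$ itself is a phantom. Here is where I would use fullness crucially, but actually the phantom conclusion can be obtained as in the proof of Proposition \ref{krause-generalized}: for any compact $C$ and map $\xi:C\to B$, the composite $\beta\xi$ lies in $\overline{\CT}(B,\CP)$, hence is killed by $\beta\in\Ideal{B}^\perp$ after the relevant composition, so $\mu\xi$ factors through $X$, so $\nu\mu\xi=\gamma_B\xi$ factors through the phantom $\psi$ and therefore vanishes on the compact $C$ — i.e. $\gamma_B\in\mathcal{P}h$. By \cite[Corollary 2.5]{Krause-tel}, any cohomological functor commuting with direct sums annihilates phantoms, so $F(\gamma_B)=0$. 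Combined with $F(\gamma_B)$ being a monomorphism, this gives $F(B)=0$. Running the same argument with $B$ replaced by $B[n]$ (legitimate since $\CI_{B[n]}=\CI_B[n]$ and the hypothesis is assumed for all shifts) yields $F(B[n])=0$ for all $n$, as required. The main obstacle I anticipate is bookkeeping the diagram (ENV') and verifying that the special-preenvelope data really does produce the monomorphism $F(\gamma_B)$ and the phantom $\gamma_B$ simultaneously — i.e. checking that the same $\gamma_B$ serves both roles — but this is exactly parallel to the argument already carried out for Proposition \ref{krause-generalized}, so fullness of $F$ enters only in that it lets us dispense with the Grothendieck-category-valued functor $h_\CC H$ used there and conclude $F(B)=0$ directly rather than $h_\CC F(B)=0$.
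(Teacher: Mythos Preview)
Your argument contains a genuine gap at the step where you claim that $\gamma_B$ is itself a phantom. The justification you give is garbled: you write ``the composite $\beta\xi$ lies in $\overline{\CT}(B,\CP)$'', but $\beta:X\to W$ and $\xi:C\to B$ do not compose. If one tries to make the argument precise---lift $\alpha\xi:C\to Y$ (which does make sense, since $\varphi\xi=0$) to some $\eta:C\to X$, then push forward along $\beta$---one needs $\beta\eta=0$ to continue. But $\eta$ is a map from a compact object $C$ to $X$, and the only information available is $\beta\in\Ideal{\Add(B)}^\perp$; there is no reason for $\beta\eta$ to vanish, because $C$ need not lie in $\Add(B)$. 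In fact, if $\gamma_B$ were always a phantom, the conclusion $F(B)=0$ would follow for \emph{any} coproduct-preserving cohomological functor, not just full ones; this would collapse the distinction the paper draws between $\Ob(\Phi_\frakE(\frakF))$ and $\CB$ (which coincide only in the smashing case).

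You have also misidentified where fullness enters. It is not a device to ``dispense with $h_\CC$''; rather, it is the engine of the whole argument. The paper applies $F$ to the diagram (ENV$'$) and, using $F(\varphi)=0$, obtains a factorisation $F(B)\dashrightarrow F(X)$ in the abelian category $\CA$. \emph{Fullness} then lifts this to an actual morphism $B\to X$ in $\CT$. Now the orthogonality $\beta\in\Ideal{\Add(B)}^\perp$ can be invoked in $\CT$ (since the domain $B$ lies in $\Add(B)$), giving $\beta\circ(B\to X)=0$ and hence $F(B)\to F(W)=0$. This produces a further factorisation $F(B)\dashrightarrow F(U)$, so $F(\gamma_B)$ factors through $F(\psi)=0$, whence $F(\gamma_B)=0$. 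Combined with the fact (which you did obtain correctly) that $F(\gamma_B)$ is a monomorphism, this yields $F(B)=0$. The point is that the vanishing happens after applying $F$ and using fullness to transport information back and forth---$\gamma_B$ itself need not be a phantom.
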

\begin{proof}
We will work with the projective class $(\Add(B), \CJ)$, and we consider the ideal $\overline{\CI}=\overline{\CI}_{\Add(B)}$.

Then for an object $A$ the $\overline{\CI}^\perp$-preenvelope 
$\gamma_A:A\to A^*$ can be embedded in a commutative diagram (ENV')
	such that $\varphi,\psi\in \mathcal{P}h$ and $\beta\in \Ideal{\Add(B)}^\perp$.

Using \cite[Corollary 2.5]{Krause-tel} we obtain that $F(\varphi)=0$ and $F(\psi)=0$. Then applying $F$ to the above diagram we obtain
the solid part of the following commutative diagram:
\[\xymatrix{    & &  &   & F(A)\ar[d]^{0}\ar[dl]\ar[ddlll]\ar@/_/@{-->}[dll]\ar@/_2pc/ @{-->}[ddllll] & \\
& & F(X)\ar[r]\ar[d]^{F(\beta)} & F(Y)\ar[r]\ar[dl]    & F(Z)\ar[r]  & F(X[1])\\
 F(U)\ar[d]^{0}\ar[r] & F(V)\ar[r]\ar[dl] & F(W)\ar[r] & F(U[1]) &   & \\ 
  F(A^*)  & &  &    &  & \ \ \ .\\
	}\] 
	Since the horizontal lines are exact sequences then we can complete the diagram with the homomorphism ${F(A)\dashrightarrow F(X)}$. But $F$ 
	is full, hence we can find a homomorphism $A\to X$ such that $F(A\to X)={F(A)\dashrightarrow F(X)}$. 

If $A\in \Add(B)$ then we have $$F(A)\dashrightarrow F(X)\to F(W)=F(A\to X\overset{\beta}\to W)=0$$
since $\beta\in\Ideal{\Add(B)}^\perp$, hence we can find the homomorphism 
$F(A)\dashrightarrow F(U)$. It follows that $F(\gamma_A)=0$.
Since $\gamma_A$ is a special preenveloping, as in the proof of Proposition \ref{krause-generalized}, we have a commutative diagram 
\[\xymatrix{I[-1]\ar[r]\ar[d]^{i[-1]} & A\ar[r]^{\gamma_A}\ar@{=}[d] & A^*\ar[d]\ar[r]    & I\ar[r]\ar[d]^{i}   & A[1]\ar@{=}[d]\\
  X[-1]\ar[r] &A\ar[r]^{e}           & E\ar[r]    & X\ar[r]& A[1]          
}\]
with $i\in \CI$. Since $F(i[-1])=0$, applying $F$ we obtain the exact sequence 
$$F(I[-1])\overset{0}\to F(A)\overset{0}\to F(A^*),$$
hence $F(A)=0$. 
\end{proof}

 \end{document}